\numberwithin{equation}{section}
\newcounter{constant}
\newcommand{\C}{\refstepcounter{constant}\ensuremath{C_{\theconstant}}}
\newcommand{\Cr}[1]{\ensuremath{C_{\ref{#1}}}}
\theoremstyle{plain}
\newtheorem{theorem}{Theorem}[section]
\newtheorem{lemma}[theorem]{Lemma}
\newtheorem{proposition}[theorem]{Proposition}
\theoremstyle{remark}
\newtheorem{remark}[theorem]{Remark}
\newtheorem*{remark*}{Remark}
\newcounter{statement}
\newenvironment{statement}{\setcounter{statement}{\value{equation}} \list{(\theequation)}{\usecounter{equation}} \setcounter{equation}{\value{statement}} \item }{\endlist}
\newcommand{\Diff}{\mathrm{d}} 
\newcommand{\FntMsrSp}{\mathcal{M}} 
\newcommand{\RealLine}{\mathbb{R}} 
\newcommand{\NullMsr}{\mathbf{0}} 
\newcommand{\bfQ}{\mathbf{Q}} 
\newcommand{\bfP}{\mathbf{P}} 
\newcommand{\Borel}{\mathscr{B}} 
\newcommand{\srF}{\mathscr{F}} 
\newcommand{\Bndd}{\mathrm{b}} 
\newcommand{\Pos}{\mathrm{p}} 
\newcommand{\BnddPos}{\mathrm{bp}} 
\newcommand{\ProbSupProc}{\mathrm{P}} 
\newcommand{\ProbQProc}{\widetilde{\mathrm{P}}} 
\newcommand{\ProbSpnDec}{\mathrm{Q}}
\newcommand{\Cadlag}{c\`adl\`ag } 
\newcommand{\Spn}{{\tilde\xi}} 
\newcommand{\Ind}{\mathbf{1}} 
\newcommand{\NMsr}{\mathbb N} 
\begin{document}

\title[Subcritical superprocesses]
{Subcritical superprocesses conditioned on non-extinction}

\thanks{The research of this project is supported by the National Key R\&D Program of China (No. 2020YFA0712900).}

\author[R. Liu]{Rongli Liu}
\address[R. Liu]{Mathematics and Applied Mathematics\\ Beijing jiaotong University\\ Beijing 100044\\ P. R. China}
\email{rlliu@bjtu.edu.cn}
\thanks{The research of Rongli Liu is supported in part by NSFC (Grant No. 12271374), and the Fundamental Research Funds for the Central Universities (Grant No.  2017RC007)}

\author[Y.-X. Ren]{Yan-Xia Ren}
\address[Y.-X. Ren]{LMAM School of Mathematical Sciences \& Center for
Statistical Science\\ Peking University\\ Beijing 100871\\ P. R. China}
\email{yxren@math.pku.edu.cn}
\thanks{The research of Yan-Xia Ren is supported in part by NSFC (Grant Nos. 12071011 and 11731009)  and LMEQF}

\author[R. Song]{Renming Song}
\address[R. Song]{Department of Mathematics\\ University of Illinois at Urbana-Champaign \\ Urbana \\ IL 61801\\ USA}
\email{rsong@illinois.edu}
\thanks{The research of Renming Song is supported in part by a  grant from the Simons Foundation (\#960480, Renming Song)}

\author[Z. Sun]{Zhenyao Sun}
\address[Z. Sun]{The Faculty of Industrial Engineering and Management \\ Technion --- Israel Institute of Technology \\ Haifa 3200003\\ Israel}
\email{zhenyao.sun@gmail.com}

\begin{abstract}
We consider a class of subcritical superprocesses $(X_t)_{t\geq 0}$ with general spatial motions and general branching mechanisms. We study the asymptotic behaviors of $\mathbf Q_{t,r}$, the distribution of $X_t$ conditioned on $X_{t+r}$ not being a null measure. We first give the existence of $\lim_{t\to \infty}\mathbf Q_{t,r}$ and $\lim_{r\to \infty}\mathbf Q_{t,r}$, and then show that an $L\log L$-type condition is equivalent to the existence of the double limits: $\lim_{r\to \infty} \lim_{t\to\infty}\mathbf Q_{t, r}$ and $\lim_{t\to \infty} \lim_{r\to\infty}\mathbf Q_{t, r}$. Finally, when the $L\log L$-type condition holds, we show that those double limits, and $\lim_{r,t\to \infty}\mathbf Q_{t,r}$, are the same. 
\end{abstract}

\maketitle
\section{Introduction} \label{sec:I}
\subsection*{Motivation}
The study of the extinction of stochastic processes related to population dynamics is of great interest in both biology and probability theory.
Take a subcritical Galton-Watson process $(Z_n)_{n\in \mathbb Z_+}$ as an example.
Assume that $Z_0 = 1$ and $m = E[Z_1] \in (0,1)$. It is well known that the extinction probability $q:=\lim_{n\to \infty}P(Z_n = 0)$
is equal to 1.
In other words, the probability $P(Z_n>0)$ decays to $0$.
A natural question is  to find the decay rate of this probability.
In 1967, Heathcote, Seneta and Vere-Jones \cite{MR0217889} proved that the following three statements are equivalent.
\begin{statement}
$\lim_{n\rightarrow\infty}P(Z_n>0)/m^n>0$.
\end{statement}
\begin{statement}
$\sup E[Z_n|Z_n>0]<\infty$.
\end{statement}
\begin{statement}
\label{eq:I.01} $E\left[Z_1\log^+ Z_1\right]<\infty$.
\end{statement}
Condition \eqref{eq:I.01} is now known as the $L\log L$ condition and the equivalence of the three statements above is usually called the $L\log L$ criterion.
It is also natural to consider $Q_{n,0}$, the distribution of $Z_n$ conditioned on $\{Z_n > 0\}$.
In 1967,  Heathcote, Seneta and Vere-Jones \cite{MR0217889} and Joffe \cite{MR205337} proved that $Q_{n,0}$ has a weak limit $Q_{\infty,0}$ when $n\to\infty$.
This result was first obtained by Yaglom \cite{MR0022045} in 1947 under some moment condition, and the probability measure $Q_{\infty,0}$ is therefore referred to as the Yaglom limit.
One can also consider $Q_{n,m}$, the distribution of $Z_n$ conditioned on $\{Z_{n+m}>0\}$.
As a corollary of the Yaglom limit result, Athreya and Ney \cite{MR0373040} showed in 1972 that for every $m\in \mathbb Z_+$, $Q_{n,m}$ has a weak limit $Q_{\infty,m}$ when $n\to \infty$.
Joffe, in his 1967 paper \cite{MR205337},
 pointed out that
for every $n\in \mathbb Z_+$, $Q_{n,m}$ has a weak limit $Q_{n,\infty}$ when $m\to \infty$.
Later in 1999, Pakes \cite{MR1735780} proved that the $L\log L$
condition
\eqref{eq:I.01} is equivalent to each of the following two statements.
\begin{statement}
$Q_{\infty,m}$ has a weak limit when $m \to \infty$.
\end{statement}
\begin{statement}
$Q_{n,\infty}$ has a weak limit when $n\to \infty$.
\end{statement}
Moreover, when $\eqref{eq:I.01}$ holds, Pakes \cite{MR1735780} showed that $\lim_{m\to \infty} Q_{\infty,m} = \lim_{n\to \infty} Q_{n,\infty}$.

Yaglom limit theorem is now a fundamental topic in the study of Markov processes.
A long list of references
on Yaglom limit theorems of a variety of models
can be found on the website \cite{pollett2008quasi} maintained by Pollett.
It turns out that Heathcote, Seneta and Vere-Jones' $L\log L$ theorem, as well as Pakes' double limit theorem, are also universal among
models with the Markovian branching property.
Analogs of these results in the context of multitype Galton-Watson
processes can be found in \cite{MR3476213} and the references therein.
Results for continuous-state branching processes can be found in
\cites{MR408016,MR2299923} and \cite{MR1727226}.

We are interested in a class of measure-valued branching processes known
as superprocesses.
The book \cite{MR2760602} is a good reference for superprocesses.
In recent years, there have been a lot of papers on the large time asymptotic behavior of superprocesses.
For laws of large numbers and central limit theorems of some supercritical superprocesses,
see \cites{MR4397885,MR2397881,MR3010225,MR3395469,MR3293289,MR4049077}
and the references therein.
For Yaglom limit results of various critical superprocesses, see
 \cites{MR1088825,MR3459635,MR4058118,MR4102269}.

In our recent work \cite{MR4175472}, we characterized the Yaglom limits of a class of subcritical superprocesses with general spatial motions and general branching mechanisms.
The goal of this paper
is to establish Heathcote, Seneta and Vere-Jones' $L\log L$ theorem, as well as Pakes' double limit theorem, for the same class of subcritical superprocesses.

\subsection*{Model and Assumptions}
We first recall the definition of superprocesses.
For any metric space $F$,
we use $\Bndd\Borel (F)$
($\mathrm p\Borel(F)$,
$\BnddPos\Borel (F)$) to denote the set of bounded Borel  functions on $F$ (non-negative Borel  functions on $F$, non-negative bounded Borel functions on $F$, respectively).
Let $E$ be a Polish space.  Let $(\xi_t)_{t\in [0,\zeta)}$ be an $E$-valued Borel right process
with (possibly sub-Markovian) transition semigroup $(P_t)_{t\geq 0}$
and lifetime $\zeta$.
 Denote $\mathbb R_+:=[0,\infty)$.  Let $\psi$ be a function on $E \times \RealLine_+$ given by
\begin{align}
	\psi(x,z)
= -\beta(x) z + \sigma(x)^2 z^2 + \int_{(0,\infty)} (e^{-zu} -1 + zu) \pi(x,\Diff u),
	\quad x\in E, z\geq 0,
\end{align}
where
$\beta, \sigma\in \Bndd\Borel (E)$,
and $\pi$ is a kernel from $E$ to $(0,\infty)$ such that \[\sup_{x\in E} \int_{(0,\infty)} (u\wedge u^2) \pi(x,\Diff u) <\infty.\]
For any $f\in \mathrm b\mathrm p\Borel (E)$,  there exists a unique non-negative Borel function $(t,x)\mapsto V_tf(x)$ on $\RealLine_+\times E$ such that $\sup_{0\leq t\leq t_0, x\in E} V_tf(x) < \infty$ for every $t_0\geq 0,$ and that
	\begin{equation} 		
    V_tf(x) + \int_0^t  \Diff s \int_E  \psi\big(y, V_{t-s} f(y)\big) P_s(x, \Diff y)  = P_tf(x), \quad t\geq 0, x\in E.
	\end{equation}
	The Polish space of all finite Borel measures on $E$, equipped with the topology of weak convergence, is denoted by $\FntMsrSp$.
	It is known that there exists an $\FntMsrSp$-valued conservative right process $(X_t)_{t\geq 0}$ with transition semigroup $(Q_t)_{t\geq 0}$ such that for each $\mu \in \FntMsrSp, t\in \RealLine_+$ and $f\in \mathrm b\mathrm p\Borel (E)$,
\begin{equation} \label{eq:I.13}
	\int_{\FntMsrSp} \exp\Big\{- \int_E f(x)\eta(\Diff x)\Big\} Q_t(\mu, \Diff\eta)
	= \exp\Big\{- \int_E V_tf(x)\mu(\Diff x)\Big\}.
\end{equation}
This process $(X_t)_{t\geq 0}$ is known as a $(\xi, \psi)$-superprocess.
	We refer our readers to \cite{MR2760602} for more details.

For each $x\in E$, denote by $\Pi_x$ the law of
$(\xi_t)_{t\in [0,\zeta)}$ with initial value $\xi_0 = x$.
For each $\mu \in \mathcal M$, denote by $\ProbSupProc_\mu$ the law of
$(X_t)_{t\geq 0}$ with initial value $X_0 = \mu$.
Given any measure $\gamma$ and function $f$, we write $\gamma(f)$ for the integral of $f$ with respect to $\gamma$ whenever it is well-defined.
For any $f\in \mathrm b\Borel(E)$, define
\begin{equation}
T_tf(x)=
\Pi_x\big[e^{\int^t_0 \beta(\xi_s) \mathrm ds}f(\xi_t)\mathbf 1_{\{t<\zeta\}}\big],
\quad t\geq 0, x\in E.
\end{equation}
It is known that $(T_t)_{t\geq 0}$ is a Borel semigroup on $E$, and that
	\begin{equation} \label{eq:I.2}
	\mu (T_t f) = \ProbSupProc_\mu[X_t(f)],\quad \mu \in \FntMsrSp, t\in \RealLine_+, f\in \mathrm b\Borel(E).
\end{equation}
We call $(T_t)_{t\geq 0}$ the mean semigroup of $X$.
We will always assume the following statement holds.
	\begin{statement} \label{asp:H1}
	There exist a constant $\lambda \in \RealLine$, a bounded strictly positive Borel function $\phi$ on $E$, and a probability measure $\nu$ with full support on $E$, such that $\nu(\phi) = 1$, and that for any $t\geq 0$, $T_t \phi = e^{\lambda t} \phi$ and $\nu T_t = e^{\lambda t} \nu$.
	\end{statement}
Denote by $L^+_1(\nu)$ the collection of $f\in \mathrm p\Borel(E)$ such that $\nu(f)<\infty$.
We further assume that the mean semigroup $(T_t)_{t\geq 0}$
satisfies the following condition.
\begin{statement}	\label{asp:H2}
		There exists a map $(t,x,f)\mapsto H_tf(x)$ from $(0,\infty)\times E \times L_1^+(\nu)$ to $\RealLine$ such that
	 $T_t f(x)=e^{\lambda t}\phi(x)\nu(f)\big(1+ H_tf(x)\big)$ for any $t>0$, $x\in E$ and $f\in L_1^+(\nu)$;
	$\sup_{x\in E, f\in L_1^+(\nu)} |H_tf(x)|$ $<$ $\infty$ for any $t > 0$;  and
	$ \lim_{t\to \infty} \sup_{x\in E, f\in L_1^+(\nu)} |H_tf(x)| = 0.$
\end{statement}
	The triplet $(\lambda, \phi, \nu)$ satisfying \eqref{asp:H1} and \eqref{asp:H2} is unique.
	In fact, suppose that there is another triplet $(\lambda', \phi', \nu')$ satisfying \eqref{asp:H1} and \eqref{asp:H2}, then  $e^{-\lambda t}T_t \mathbf 1_E (x) \to \phi(x)$ and $e^{-\lambda' t} T_t \mathbf 1_E(x) \to \phi'(x)$ as $t\to \infty$ for arbitrary $x\in \mathbb E$ .
	This can only happen if $(\lambda',\phi') = (\lambda, \phi)$.
	Also it is clear that for every bounded Borel function $f$ on $E$,
\begin{equation}
	\nu'(f) = \lim_{t\to \infty} \frac{T_tf(x)}{e^{\lambda't} \phi'(x)} =  \lim_{t\to \infty} \frac{T_tf(x)}{e^{\lambda t} \phi(x)} = \nu(f),
\end{equation}
	which says that $\nu = \nu'$.

	Assumptions similar to \eqref{asp:H1} and \eqref{asp:H2} are nowadays very common in the study of superprocesses \cites{MR2535827,MR3459635,MR4058118,MR4102269,MR4175472,garcia2022asymptotic} and other spatial Markovian branching processes
\cites{MR3601657,garcia2022asymptotic,MR4187129,harris2021yaglom,MR4187122}.
	In particular, we
	mention a very recent paper \cite{garcia2022asymptotic} where exactly the same assumptions
	were used to study the asymptotic  behavior of the moments of both the superprocesses and the branching Markov processes.
	In general, it was explained in our earlier paper \cite{MR4175472} that \eqref{asp:H1} and \eqref{asp:H2} hold true if the transition semigroup of the Markov process $(\xi_t)_{t\geq 0}$ is intrinsiclly ultracontractive.
	(For the definition and more details on the intrinsically ultracontractivity, see \cite{MR2487824} and \cite{MR3601657}.)
	Some interesting examples satisfying \eqref{asp:H1} and \eqref{asp:H2} include
	multitype irreducible continuous-state branching processes and super-Brownian motions in a bounded Lipshitz domain.
	Many more examples can be found in \cite{MR4175472}*{Section 1.3} and \cite{MR3459635}*{Section 1.4}.
         We also  mention here that one can not apply our results to the super-Brownian motion on $\mathbb R^d$ because it does not satisfy \eqref{asp:H2}.

	Under the Assumption \eqref{asp:H1} and \eqref{asp:H2},	we say the superprocess is supercritical, critical, or subcritical, if $\lambda > 0$, $\lambda = 0$, or $\lambda < 0$, respectively.
Note that $(e^{-\lambda t} X_t(\phi))_{t\geq 0}$ is a martingale.
 The process grows on average if $\lambda>0$; decays on average if $\lambda<0$; maintains a stabilizing
average if $\lambda=0$. The above definition is consistent
in spirit with the same notion for Galton-Watson processes.
See \cite{harris2021yaglom} for similar definitions for branching Markov processes.
	In this paper,	we are only concerned with the subcritical case, i.e.,
	we will assume that
	\begin{equation}\label{asp:15}
		\lambda < 0.
	\end{equation}

Denote by $\NullMsr$ the null measure on $E$.
Define $\FntMsrSp^o:= \FntMsrSp \setminus \{\NullMsr\}$.
It is possible that the superprocess is persistent in the sense that  $\ProbSupProc_\mu(X_t\neq\NullMsr \text{ for all } t\geq 0) = 1$ for any $\mu\in \FntMsrSp^o$.
Note that, if $X$ is persistent, then it is trivial to consider $X$
conditioned on non-extinction.
So we use the following assumption to exclude this trivial case:
\begin{equation}
	\label{asp:H3}
\mathrm P_\nu(X_t = \NullMsr)>0,\quad t> 0.
\end{equation}
	If the branching mechanism is spatially homogeneous, that is to say the function $\psi(x,z) = \psi(z)$ is independent of $x\in E$, then \eqref{asp:H3} is known to be equivalent to Grey's condition:
\begin{statement}
	There exists $z'>0$ such that $\psi(z) > 0$ for all $z\geq z'$ and $\int_{z'}^\infty \psi(z)^{-1}\mathrm dz < \infty$.
\end{statement}
	It is also known that if the branching mechanism $\psi(x,z)$ is bounded below by a spatially homogeneous branching mechanism $\tilde\psi$ satisfying Grey's condition, then \eqref{asp:H3} holds.
	See \cite{MR3459635}*{Lemma 2.3} for more details.

\subsection*{Main results}
Given $X_0 = \mu \in \FntMsrSp^o$, we denote by
$\bfQ^\mu_{t,r}$ the distribution of $X_t$ conditioned on $\{X_{t+r}\neq \NullMsr\}$, i.e.,
\[
	\bfQ^\mu_{t,r}(A) := \ProbSupProc_\mu(X_t\in A| X_{t+r}\neq \NullMsr), \quad t,r\in \RealLine_+, A \in \Borel(\FntMsrSp).
\]
Our first result is about the convergence of $\bfQ^\mu_{t,r}$
as $t\to\infty$ with $r$ fixed.
\begin{theorem} \label{thm:Y}
	For any $r\in \RealLine_+$, there exists a probability measure $\bfQ_{\infty,r}$ on $\FntMsrSp$ such that, for any $\mu \in \FntMsrSp^o$, $\bfQ^\mu_{t,r}$ converges weakly to $\bfQ_{\infty,r}$ as $t\to \infty$.
\end{theorem}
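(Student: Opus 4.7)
The plan is to pass to Laplace functionals and reduce the claim to the Yaglom limit (i.e., the $r=0$ case of this theorem), which was established in our earlier paper \cite{MR4175472}. Introduce the extinction function $v_r(x) := -\log\ProbSupProc_{\delta_x}(X_r = \NullMsr)$, so that $\ProbSupProc_\eta(X_r = \NullMsr) = e^{-\eta(v_r)}$ for every $\eta \in \FntMsrSp$. Using the Markov property of $(X_t)_{t\geq 0}$ and the Laplace formula \eqref{eq:I.13}, one obtains for each $f \in \BnddPos\Borel(E)$ the identity
\begin{equation} \label{eq:plan.1}
	\bfQ^\mu_{t,r}\bigl[e^{-\eta(f)}\bigr] = \frac{\ProbSupProc_\mu[e^{-X_t(f)}] - \ProbSupProc_\mu[e^{-X_t(f)}\Ind_{\{X_{t+r}=\NullMsr\}}]}{1 - \ProbSupProc_\mu(X_{t+r}=\NullMsr)} = \frac{e^{-\mu(V_t f)} - e^{-\mu(V_t(f+v_r))}}{1 - e^{-\mu(v_{t+r})}}.
\end{equation}

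Next I would divide both the numerator and the denominator of \eqref{eq:plan.1} by $1 - e^{-\mu(v_t)}$ and rewrite the ratio as $(\Psi^\mu_t(f+v_r) - \Psi^\mu_t(f))/\Psi^\mu_t(v_r)$, where $\Psi^\mu_t(g) := (1 - e^{-\mu(V_t g)})/(1 - e^{-\mu(v_t)})$. Since $\bfQ^\mu_{t,0}[e^{-\eta(g)}] = 1 - \Psi^\mu_t(g)$, the Yaglom result of \cite{MR4175472} yields $\Psi^\mu_t(g) \to 1 - \bfQ_{\infty,0}[e^{-\eta(g)}] =: \Psi(g)$ as $t\to\infty$ for every $g\in\BnddPos\Borel(E)$, and crucially the limit does not depend on $\mu\in\FntMsrSp^o$. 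Applying this to $g=f$, $g=f+v_r$, $g=v_r$, using $e^{-\eta(f+v_r)} = e^{-\eta(f)}\ProbSupProc_\eta(X_r=\NullMsr)$, and noting that $\Psi(v_r) = \bfQ_{\infty,0}[\ProbSupProc_\eta(X_r\neq\NullMsr)] > 0$ (because $\bfQ_{\infty,0}$ is supported on $\FntMsrSp^o$ and $\ProbSupProc_\eta(X_r\neq\NullMsr) > 0$ for every $\eta\in\FntMsrSp^o$ under \eqref{asp:H3}), I would conclude that
\begin{equation} \label{eq:plan.2}
	\lim_{t\to\infty}\bfQ^\mu_{t,r}\bigl[e^{-\eta(f)}\bigr] = \frac{\bfQ_{\infty,0}\bigl[e^{-\eta(f)}\,\ProbSupProc_\eta(X_r\neq\NullMsr)\bigr]}{\bfQ_{\infty,0}\bigl[\ProbSupProc_\eta(X_r\neq\NullMsr)\bigr]}.
\end{equation}
The right-hand side is the Laplace functional, evaluated at $f$, of the probability measure $\bfQ_{\infty,r}$ on $\FntMsrSp$ obtained by size-biasing $\bfQ_{\infty,0}$ with weight $\ProbSupProc_\eta(X_r\neq\NullMsr)$; since convergence of Laplace functionals over $\BnddPos\Borel(E)$ characterizes weak convergence on $\FntMsrSp$, the theorem follows.

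The main technical point is that $v_r$ is, a priori, only a nonnegative Borel function on $E$ and need not be bounded, so one must justify the Yaglom-type convergence $\Psi^\mu_t(g) \to \Psi(g)$ for $g = v_r$ and $g = f+v_r$ rather than merely for bounded $g$. I expect this to be handled by a monotone approximation $g_n := g\wedge n \uparrow g$: by the monotonicity of $V_t$ one has $V_t g_n \uparrow V_t g$, hence $1 - e^{-\mu(V_t g_n)} \uparrow 1 - e^{-\mu(V_t g)}$, and one can interchange the $n\to\infty$ and $t\to\infty$ limits using that the quantities $\Psi^\mu_t(g)$ are uniformly bounded in $[0,1]$ and increasing in $g$. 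Once this extension is in place, the rest of the argument is a direct algebraic reduction to the $r=0$ Yaglom limit from \cite{MR4175472}.
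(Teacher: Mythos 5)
Your proposal is correct and follows essentially the same route as the paper: the paper proves the stronger Proposition \ref{thm:Y.2} on the shifted two-sided process, but its core computation is exactly your decomposition --- apply the Markov property, divide numerator and denominator by $\ProbSupProc_\mu(X_t\neq \NullMsr)$, and invoke the $r=0$ Yaglom limit in the strong Laplace-transform form of Lemma \ref{thm:Y.4} together with the survival-probability ratio (Lemma \ref{thm:Y.3}, \eqref{eq:Y.1}) --- and your limiting measure agrees with the paper's $\bfQ_{\infty,r}$ defined in \eqref{eq:Y.15}. The only fragile step, your interchange of the $n\to\infty$ and $t\to\infty$ limits in the monotone approximation, is in fact unnecessary: for $r>0$ the function $v_r$ is bounded (Lemma \ref{thm:M.3}), and in any case the Yaglom convergence of Laplace transforms used here is already available for arbitrary $[0,\infty]$-valued Borel functions (Lemma \ref{thm:Y.4}).
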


	Notice that the case $r=0$ of Theorem \ref{thm:Y} was given in \cite{MR4175472}.

Our next result is about the convergence of $\bfQ^\mu_{t,r}$ as $r\to\infty$ with $t$ fixed.
For any given measurable space $(\tilde \Omega,\tilde {\mathscr F})$, we say a sequence of probability measures
$(\mu_n)_{n=1}^\infty$ on $(\tilde \Omega,\tilde {\mathscr F})$
converge strongly (or converge setwise) to a probability measure $\mu$ on
 $(\tilde \Omega,\tilde {\mathscr F})$
if $\lim_{n\to\infty}\mu_n(f) = \mu(f)$ for any $f\in \Bndd\tilde {\mathscr F}$.  An equivalent definition can be found in \cite{MR1974383}*{Definition 1.4.1}.
\begin{theorem} \label{thm:Q}
	For any $\mu\in \FntMsrSp^o$ and $t\in \RealLine_+$, there exists a probability measure $\bfQ^\mu_{t,\infty}$ on $\FntMsrSp$ such that $\bfQ^\mu_{t,r}$
converges strongly to $\bfQ^\mu_{t,\infty}$ as $r\to \infty$.
\end{theorem}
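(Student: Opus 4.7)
The plan is to use the Markov and branching properties to reduce the limit to the asymptotics of the extinction-rate function $v_r(x) := -\log \ProbSupProc_{\delta_x}(X_r = \NullMsr)$, $r > 0$, $x \in E$. By the branching property, $\ProbSupProc_\eta(X_r = \NullMsr) = e^{-\eta(v_r)}$ for each $\eta \in \FntMsrSp$; applying the Markov property of $X$ at time $t$ then gives, for every $f \in \Bndd\Borel(\FntMsrSp)$,
\[
	\bfQ^\mu_{t,r}(f) = \frac{\ProbSupProc_\mu\bigl[f(X_t)\bigl(1-e^{-X_t(v_r)}\bigr)\bigr]}{\ProbSupProc_\mu\bigl[1-e^{-X_t(v_r)}\bigr]}.
\]
The question thereby reduces to showing that, after a common rescaling, both numerator and denominator converge as $r\to\infty$, and that the limit is a probability measure.

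The key technical step is the uniform asymptotic
\[
	\lim_{r\to\infty}\, \sup_{x\in E}\,\Bigl|\frac{v_r(x)}{\nu(v_r)} - \phi(x)\Bigr| = 0,
\]
which I would establish by combining the semigroup identity $v_{r+s} = V_s v_r$ (an immediate consequence of the Markov property and \eqref{eq:I.13}) with the Duhamel decomposition
\[
	V_s v_r = T_s v_r - \int_0^s T_{s-u}\bigl[\psi_0(\cdot, v_{r+u})\bigr]\,\Diff u,
\]
where $\psi_0(x,z) := \psi(x,z) + \beta(x)z = \sigma(x)^2 z^2 + \int_{(0,\infty)}(e^{-zu}-1+zu)\pi(x,\Diff u)$ is non-negative and $O(z^2)$ uniformly in $x$. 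Assumption \eqref{asp:H2} supplies the sharp linear asymptotic $T_s v_r = e^{\lambda s}\phi \cdot \nu(v_r)\cdot\bigl(1+H_s v_r\bigr)$ with $\sup_x |H_s v_r|\to 0$ as $s\to\infty$, while subcriticality \eqref{asp:15} together with \eqref{asp:H3} forces $\|v_r\|_\infty \to 0$, so the quadratic Duhamel correction is of lower order than the leading term. Splitting $r = r' + s$ with $r'$ first chosen large (to make $v_{r'}$ small) and then $s$ large (to invoke \eqref{asp:H2}) delivers the claimed uniform proportionality, with $c_r := \nu(v_r) \to 0$.

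With $v_r \sim c_r \phi$ uniformly on $E$, the remainder is a dominated-convergence argument. Pathwise $(1-e^{-X_t(v_r)})/c_r \to X_t(\phi)$ as $r\to\infty$, and this ratio is dominated uniformly in large $r$ by a constant multiple of $X_t(\phi)$, which is integrable under $\ProbSupProc_\mu$ since $\ProbSupProc_\mu[X_t(\phi)] = e^{\lambda t}\mu(\phi) < \infty$ by \eqref{eq:I.2}. Dominated convergence applied to both numerator and denominator produces
\[
	\bfQ^\mu_{t,\infty}(f) = \frac{\ProbSupProc_\mu\bigl[f(X_t)\,X_t(\phi)\bigr]}{e^{\lambda t}\mu(\phi)}
\]
for every $f\in \Bndd\Borel(\FntMsrSp)$; this is the Doob $h$-transform of the law of $X_t$ under $\ProbSupProc_\mu$ by the density $X_t(\phi)/(e^{\lambda t}\mu(\phi))$ and hence a probability measure, simultaneously giving existence of $\bfQ^\mu_{t,\infty}$ and the claimed strong convergence. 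The principal obstacle is the uniform asymptotic for $v_r$: the formal linearization is clear, but making the error estimate uniform on $E$ requires a careful quantitative pairing of \eqref{asp:H2} with a decay rate for $\|v_r\|_\infty$, so that the quadratic Duhamel correction remains negligible even after division by the possibly slowly-decaying normalization $\nu(v_r)$.
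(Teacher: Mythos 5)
Your argument is correct and takes essentially the same route as the paper. The uniform asymptotic $\sup_{x\in E}|v_r(x)/\nu(v_r)-\phi(x)|\to 0$ that you flag as the principal obstacle is exactly the paper's Lemma~\ref{thm:M.3}, which the paper imports from \cite{MR4175472}*{Proposition 2.2} rather than re-deriving via a Duhamel expansion; given that input, your dominated-convergence computation yielding $\bfQ^\mu_{t,\infty}[F]=\ProbSupProc_\mu[X_t(\phi)F(X_t)]/(e^{\lambda t}\mu(\phi))$ coincides with the paper's proof, where Lemmas~\ref{thm:Q.3} and~\ref{thm:Y.3} supply precisely the domination and pointwise-limit inputs (the paper also upgrades the statement to Proposition~\ref{thm:Q.2}, local strong convergence of the conditioned law of the whole process to the Q-process $\ProbQProc^{(\infty)}_\mu$, but for Theorem~\ref{thm:Q} itself your fixed-$t$ argument is equivalent).
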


We then consider the limits of $\bfQ_{\infty,r}$ and $\bfQ^\mu_{t,\infty}$ as $r\to\infty$ and $t\to\infty$ respectively.  Define $\mathcal E \in [0,\infty]$ by
\begin{equation}\label{eq:I.35}
	\mathcal E
	:= \int_E \nu(\Diff x)\int_{(0,\infty)} u\phi(x)\log^+\big(u\phi(x)\big) \pi(x, \Diff u),
\end{equation}
where
$\log^+ z := \max(\log z,0 )$ for every $z>0$.

\begin{theorem} \label{thm:E}
Let $\mu \in \FntMsrSp^o$ be arbitrary.
	The following five statements are equivalent:
\begin{enumerate}
\item
	$\mathcal E < \infty$.
\item
	$\int_\FntMsrSp \eta(\phi) \bfQ_{\infty,0}(\Diff \eta) < \infty$.
\item \label{eq:liminf}
	$\liminf_{t\to \infty} e^{-\lambda t}\ProbSupProc_\mu(X_t \neq \NullMsr) > 0$.
\item \label{eq:StrongDoubleLimit}
	$\bfQ_{\infty, r}$ converges strongly as $r\to \infty$.
\item \label{eq:WeakDoubleLimit}
	$\bfQ^\mu_{t,\infty}$ converges weakly as $t\to \infty$.
\end{enumerate}
\end{theorem}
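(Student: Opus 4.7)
The five statements split into the $L\log L$ trio $(1)\Leftrightarrow(2)\Leftrightarrow(3)$, which pins down the precise exponential decay of the survival probability, and the double-limit pair $(4)$, $(5)$, analogous to Pakes' theorem. My plan is first to establish the asymptotics of $\ProbSupProc_\mu(X_t\neq\NullMsr)$ and read off $(1)\Leftrightarrow(3)$, then to derive $(2)\Leftrightarrow(3)$ using the Yaglom convergence of Theorem \ref{thm:Y}, and finally to treat $(3)\Leftrightarrow(4)\Leftrightarrow(5)$ by explicit Laplace-functional computations based on Theorems \ref{thm:Y} and \ref{thm:Q}.

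\textbf{The $L\log L$ trio.} Set $v_t(x):=-\log\ProbSupProc_{\delta_x}(X_t=\NullMsr)$, which is finite by \eqref{asp:H3} and arises as the monotone limit $\lim_{n\to\infty}V_t(n\mathbf{1}_E)(x)$. I would show that $e^{-\lambda t}v_t(x)/\phi(x)$ is eventually monotone in $t$ and converges to a single constant $c\in[0,\infty)$: starting from the cumulant equation for $v_t$ and using \eqref{asp:H2} to separate the temporal factor $e^{\lambda t}$ from the spatial profile $\phi$, the analysis reduces to a renewal-type equation for the scalar $c_t:=e^{-\lambda t}\nu(v_t)$. The strict positivity $c>0$ is then equivalent to the convergence of an integral obtained by substituting the ansatz $v_s\approx c\phi e^{\lambda s}$ back into the non-linear term; after the linear part of $\psi$ cancels against the Malthusian exponent and the change of variable $u\mapsto u\phi(y)e^{\lambda s}$ is performed in the jump kernel of $\psi$, this integral matches, up to positive constants, the $L\log L$ quantity $\mathcal E$ in \eqref{eq:I.35}. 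Since $\ProbSupProc_\mu(X_t\neq\NullMsr)=1-e^{-\mu(v_t)}\sim\mu(v_t)\sim c\,\mu(\phi)\,e^{\lambda t}$ by \eqref{asp:H2}, this gives $(1)\Leftrightarrow(3)$. The equivalence $(2)\Leftrightarrow(3)$ follows from the mean identity $\bfQ^\mu_{t,0}(\eta(\phi))=e^{\lambda t}\mu(\phi)/\ProbSupProc_\mu(X_t\neq\NullMsr)$, Theorem \ref{thm:Y}, and Fatou's lemma, together with a uniform integrability argument (via the spine decomposition underlying the Yaglom limit in \cite{MR4175472}) to promote inequality to equality.

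\textbf{The double limits.} Using the semigroup identity $v_{t+r}=V_tv_r$ and \eqref{eq:I.13}, the Laplace functional of $\bfQ^\mu_{t,r}$ at $f\in\BnddPos\Borel(E)$ equals
\begin{equation*}
\bfQ^\mu_{t,r}\bigl(e^{-\eta(f)}\bigr)=\frac{e^{-\mu(V_tf)}-e^{-\mu(V_t(f+v_r))}}{1-e^{-\mu(V_tv_r)}}.
\end{equation*}
Once $(1)\Leftrightarrow(3)$ is available, the denominator is of sharp order $c\,\mu(\phi)e^{\lambda(t+r)}$ and the numerator admits an expansion of the same order, so the two iterated limits become routine Laplace-transform computations. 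Passing $t\to\infty$ first (with $r$ fixed) yields $\bfQ_{\infty,r}$ via Theorem \ref{thm:Y}, and then $r\to\infty$; passing $r\to\infty$ first (with $t$ fixed) yields $\bfQ^\mu_{t,\infty}$ via Theorem \ref{thm:Q}, and then $t\to\infty$; under $\mathcal E<\infty$ the two iterated limits coincide. Conversely, existence of either double limit forces $e^{-\lambda(t+r)}\bigl(1-e^{-\mu(V_tv_r)}\bigr)$ to stabilize at a positive value, which is precisely $(3)$; this closes the implications $(4)\Rightarrow(3)$ and $(5)\Rightarrow(3)$.

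\textbf{Main obstacle.} The technical core is the spatially inhomogeneous $L\log L$ analysis of $v_t$: translating the cumulant equation, via \eqref{asp:H2}, into a tractable renewal equation for a scalar quantity, then matching the resulting integrability condition with $\mathcal E$ through the change of variable in the jump kernel of $\psi$. A secondary difficulty is the uniform integrability of $\eta(\phi)$ under $\bfQ^\mu_{t,0}$ needed to finish $(2)\Leftrightarrow(3)$, which I expect to obtain from the spine construction behind the Yaglom limit in \cite{MR4175472}.
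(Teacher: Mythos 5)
Your proposal follows a genuinely different route from the paper's.  The paper's engine is probabilistic: it introduces the constant $\mathcal K := \ProbSpnDec\bigl[Z_0^{(-\infty,0]}(\phi)^{-1}\bigr]$ through the spine change of measure, shows $\lim_{t\to\infty}e^{-\lambda t}\ProbSupProc_\mu(X_t\neq\NullMsr)=\mathcal K\mu(\phi)$ (Proposition~\ref{thm:K}), proves $\mathcal K>0\Leftrightarrow\mathcal E<\infty$ by analysing whether the total $\phi$-mass of all immigration along a two-sided stationary spine is a.s.\ finite --- which requires an ergodic-theorem argument for the spine (Lemmas~\ref{thm:L.1}, \ref{thm:L.2}) --- and then treats the double-limit items~(4),~(5) via quantitative spine ergodicity (Lemma~\ref{thm:F}) and contradiction arguments (Propositions~\ref{thm:T}, \ref{thm:R}).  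You instead centre the argument on the scalar $c_t:=e^{-\lambda t}\nu(v_t)$ via the cumulant equation; the paper itself uses a small piece of that approach only in Lemma~\ref{rate lemma}, and even there it presupposes $\mathcal K>0$ from the probabilistic argument.  Your route, if carried out, would likely be more elementary in that it avoids Kuznetsov measures and spine constructions for the core equivalence.

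That said, the sketch has two genuine gaps.  First, the central step, ``matching'' the integrability of $\int^\infty\nu(\Psi_0v_s)/\nu(v_s)\,\mathrm ds$ with $\mathcal E<\infty$, is required in \emph{both} directions.  The forward implication ($\mathcal E<\infty$ gives a finite integral) is essentially Lemma~\ref{rate lemma} of the paper, but the converse ($\mathcal E=\infty$ forces $c=0$) is never carried out analytically in the paper; the paper gets it via the ergodic argument.  I believe it can be done --- from $\nu(v_s)\geq ce^{\lambda s}$, the elementary bound $e^{-w}-1+w\geq\tfrac12 w$ for $w\geq2$, Lemma~\ref{thm:M.3} and Fubini one can lower-bound the integral by $\mathcal E$ up to a finite constant --- but you must actually do this, since it is the hard direction.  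Second, the $(2)\Leftrightarrow(3)$ step as you describe it does not close.  Fatou gives $\int_\FntMsrSp\eta(\phi)\,\bfQ_{\infty,0}(\Diff\eta)\leq\liminf_t\bfQ^\mu_{t,0}(\eta(\phi))$, which proves $(3)\Rightarrow(2)$; but for $(2)\Rightarrow(3)$ ``uniform integrability'' is the wrong tool, since when $c=0$ the first moments $\bfQ^\mu_{t,0}(\eta(\phi))$ diverge and the family cannot be uniformly integrable, and UI cannot force the limiting moment to be infinite.  The paper's Step~2 of Proposition~\ref{thm:M} is the right move: use the quasi-stationarity identity $e^{\lambda t}=\int\ProbSupProc_\eta(X_t\neq\NullMsr)\,\bfQ_{\infty,0}(\Diff\eta)$ together with $\ProbSupProc_\eta(X_t\neq\NullMsr)\leq\eta(v_t)\leq 2\nu(v_t)\eta(\phi)$, and observe that $e^{\lambda t}/\nu(v_t)\to\infty$ when $c=0$.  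Finally, the double-limit Laplace computations are not as routine as claimed: the $r\to\infty$ limit is a $0/0$ limit requiring control on $V_t(f+v_r)-V_tf$, and the subsequent $t\to\infty$ limit needs asymptotics of a perturbed semigroup --- exactly what the paper supplies through spine ergodicity in Lemma~\ref{thm:F}.  This does not make your plan wrong, but it signals that those ``routine'' steps conceal essentially the same analytical content that the spine approach packages probabilistically.
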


	Theorem \ref{thm:E} can be considered as an analog Heathcote, Seneta and Vere-Jones' $L\log L$ theorem for superprocesses.
	In particular, the condition $\mathcal E < \infty$ is an
	analog of the $L \log L$ condition \eqref{eq:I.01}.
	The same condition has already appeared in  \cite{MR2535827}
	where the first three authors of this paper studied the asymptotic behavior of supercritical superdiffusions.
	Here, in the subcritical setting,  $\mathcal E< \infty$ is shown to be equivalent to the exponential decay of the survival probability.
	(We are using `liminf' in the third statement because we are not assuming prior existence of the limit.
	In fact, it is made clear in the the next theorem that the limit does exist under the condition $\mathcal E < \infty$.)
	The equivalence of	$\mathcal E< \infty$ to the existence of the two types of double limits in \eqref{eq:StrongConcide} and \eqref{eq:WeakConside} is
	in parallel to Pakes' double limit theorem \cite{MR1735780}*{Theorems 2.2 and 2.3}.
	Notice that in \eqref{eq:StrongDoubleLimit} and \eqref{eq:WeakDoubleLimit}, the sense of convergence for these two double limits are different.
	This differences is not
         present in the context of Galton-Watson processes
	because the weak convergence and the strong convergence are equivalent for the probability distributions on the discrete space $\mathbb N$.
	The following theorem says that
	the two double limits coincide,
	which is in parallel to Pakes' result on the Galton-Watson branching processes.
		We also gives the weak limit for $\bfQ^\mu_{t,r}$ when $t$ and $r$ converges to $\infty$ together.
		It seems that this latter result has not been explored before for other Markov branching processes.

\begin{theorem} \label{thm:E.0}
	Suppose that $\mathcal E<\infty$. Then there exists a probability measure $\bfQ_{\infty,\infty}$ on $\FntMsrSp$ such that the following four statements hold for any  $\mu\in \FntMsrSp^o$:
\begin{enumerate}
\item
	$\bfQ_{\infty,\infty}(\Diff \eta) =\eta(\phi) \bfQ_{\infty,0}(\Diff \eta)/ \int_\FntMsrSp \eta(\phi) \bfQ_{\infty,0}(\Diff \eta).$
\item
	$\lim_{t\to \infty} e^{-\lambda t}\ProbSupProc_\mu(X_t \neq \NullMsr) = \mu(\phi)/ \int_\FntMsrSp \eta(\phi) \bfQ_{\infty,0}(\Diff \eta).$
\item \label{eq:StrongConcide}
	$\bfQ_{\infty, r}$ converges strongly to $\bfQ_{\infty,\infty}$ as $r\to \infty$.
\item \label{eq:WeakConside}
	$\bfQ^\mu_{t,\infty}$ converges weakly to $\bfQ_{\infty,\infty}$ as $t\to \infty$.
	\item
	$\bfQ^\mu_{t,r}$ converges weakly to $\bfQ_{\infty,\infty}$ as $t,r\to \infty$.
\end{enumerate}
\end{theorem}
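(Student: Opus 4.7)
The plan is to reduce all five statements to careful analysis of the key identity
$$\bfQ^\mu_{t,r}(f) = \frac{\bfQ^\mu_{t,0}(f u_r)}{\bfQ^\mu_{t,0}(u_r)}, \quad f \in \Bndd\Borel(\FntMsrSp),\ t,r\geq 0,$$
which comes from the Markov property at time $t$; here $u_r(\eta) := e^{-\lambda r}(1 - e^{-\eta(v_r)})$ and $v_r(x) := -\log \ProbSupProc_{\delta_x}(X_r = \NullMsr)$. I will need two analytical inputs for $u_r$. Using $v_r \leq T_r\mathbf{1}$ together with Assumption \eqref{asp:H2}, one deduces $v_r(x) \leq K e^{\lambda r}\phi(x)$ and hence the uniform upper bound
$$u_r(\eta) \leq e^{-\lambda r}\eta(v_r) \leq K \eta(\phi),\qquad \eta \in \FntMsrSp,\ r\geq 0.$$
Under $\mathcal E < \infty$, I further expect the sharp uniform asymptotic $e^{-\lambda r}v_r(x) \to c\phi(x)$ in $x \in E$, for a constant $c>0$; combined with $|1 - e^{-y} - y| \leq y^2/2$, this yields $u_r(\eta) \to c \eta(\phi)$ pointwise on $\FntMsrSp$, and uniformly on each sublevel set $\{\eta : \eta(\phi) \leq M\}$. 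Establishing this uniform asymptotic is the main technical burden; the plan is to obtain it from a Taylor expansion of the cumulant semigroup $V_r$ around zero, using \eqref{asp:H2} for the linearization and $\mathcal E < \infty$ to control the nonlinear remainder.

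Granted these inputs, (2) follows from $\ProbSupProc_\mu(X_t \neq \NullMsr) = 1 - e^{-\mu(v_t)} \sim \mu(v_t)$ and $e^{-\lambda t}\mu(v_t) \to c\mu(\phi)$; the constant $c$ is identified as $1/\bfQ_{\infty,0}(\eta(\phi))$ through the first-moment identity $\bfQ^\mu_{t,0}(\eta(\phi)) = e^{\lambda t}\mu(\phi)/\ProbSupProc_\mu(X_t \neq \NullMsr)$ together with a convergence $\bfQ^\mu_{t,0}(\eta(\phi)) \to \bfQ_{\infty,0}(\eta(\phi))$ obtained by approximating $\phi$ by bounded continuous functions in $L^1(\nu)$ and invoking Theorem \ref{thm:Y}. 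Statement (1) is then a valid probability measure, since the normalizing constant lies in $(0,\infty)$ by Theorem \ref{thm:E}. For (3), I fix $r$ and let $t \to \infty$ in the key identity: since $u_r$ is bounded and continuous on $\FntMsrSp$, the weak convergence $\bfQ^\mu_{t,0} \to \bfQ_{\infty,0}$ yields $\bfQ_{\infty,r}(f) = \bfQ_{\infty,0}(f u_r)/\bfQ_{\infty,0}(u_r)$ for $f \in \Bndd\Borel$; then letting $r \to \infty$, a dominated-convergence step with $u_r \leq K\eta(\phi) \in L^1(\bfQ_{\infty,0})$ and $u_r \to c\eta(\phi)$ delivers the strong convergence in (3). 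For (4), I fix $t$ and let $r \to \infty$: dominated convergence with $u_r \leq K\eta(\phi)$ and $\ProbSupProc_\mu[X_t(\phi)] = e^{\lambda t}\mu(\phi) < \infty$ gives the size-biased formula $\bfQ^\mu_{t,\infty}(f) = \bfQ^\mu_{t,0}(f\,\eta(\phi))/\bfQ^\mu_{t,0}(\eta(\phi))$; then as $t \to \infty$, the weak convergence of $\bfQ^\mu_{t,0}$ together with the uniform integrability of $\eta(\phi)$ under $\bfQ^\mu_{t,0}$ (a consequence of the first-moment convergence already used in (2)) produces the weak convergence to $\bfQ_{\infty,\infty}$ required for (4).

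For the joint limit (5), the plan is to combine the two single-limit arguments through the decomposition
$$\bfQ^\mu_{t,0}(f u_r) - c\bfQ_{\infty,0}(f \eta(\phi)) = \bfQ^\mu_{t,0}\bigl(f[u_r - c\eta(\phi)]\bigr) + c\bigl[\bfQ^\mu_{t,0}(f \eta(\phi)) - \bfQ_{\infty,0}(f \eta(\phi))\bigr],$$
and an analogous one for $\bfQ^\mu_{t,0}(u_r)$. The second summand does not depend on $r$ and vanishes as $t \to \infty$ by the argument already used for (4). The first summand is controlled by splitting at $\{\eta(\phi) > M\}$: on the tail the bound $|u_r - c\eta(\phi)| \leq (K+c)\eta(\phi)$ combined with the uniform integrability $\sup_t \bfQ^\mu_{t,0}(\eta(\phi)\mathbf{1}_{\eta(\phi) > M}) \to 0$ as $M \to \infty$ gives an estimate uniform in both $t$ and $r$, while on $\{\eta(\phi) \leq M\}$ the uniform convergence $u_r \to c\eta(\phi)$ forces the contribution to vanish as $r \to \infty$ uniformly in $t$. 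Taking the ratio produces $\bfQ^\mu_{t,r}(f) \to \bfQ_{\infty,\infty}(f)$ for $f \in C_b(\FntMsrSp)$, i.e., (5). Throughout, the crux is the uniform asymptotic for $e^{-\lambda r}v_r$: without it, one has only pointwise convergence, which is insufficient for turning the two single-limit arguments into a genuine double limit.
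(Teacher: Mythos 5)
Your strategy — reduce everything to the identity $\bfQ^\mu_{t,r}(f)=\bfQ^\mu_{t,0}(f u_r)/\bfQ^\mu_{t,0}(u_r)$ and pass to the limit via dominated convergence in $r$ and weak convergence/uniform integrability in $t$ — is genuinely different from the paper's route, which proves Propositions \ref{thm:K}--\ref{prop:new} through the spine decomposition (Lemmas \ref{thm:K.6}, \ref{thm:T.5}), the Q-process, and ergodic arguments for the spine. Most of your calculus is sound: the identity itself is correct, the dominated-convergence step in (3) goes through once the bound $u_r(\eta)\le K\eta(\phi)$ and the pointwise limit $u_r(\eta)\to c\eta(\phi)$ are available, and the $t\to\infty$ step in (4) and the two-scale splitting in (5) are in principle the right moves. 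But the proposal has a genuine circularity exactly at the point where the spine argument does the real work, and also a false auxiliary claim.

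The central gap is the identification $c=1/\int_\FntMsrSp\eta(\phi)\,\bfQ_{\infty,0}(\Diff\eta)$, equivalently the first-moment convergence $\bfQ^\mu_{t,0}(\eta(\phi))\to\bfQ_{\infty,0}(\eta(\phi))$, which is precisely Proposition \ref{thm:M}. Your sketch obtains it by ``approximating $\phi$ by bounded continuous functions in $L^1(\nu)$ and invoking Theorem \ref{thm:Y}'', but this does not address the actual obstruction: $\phi$ is already bounded on $E$; the difficulty is that the map $\eta\mapsto\eta(\phi)$ is unbounded on $\FntMsrSp$, so Theorem \ref{thm:Y} (weak convergence on $\FntMsrSp$) does not apply to it. Fatou only gives $\bfQ_{\infty,0}(\eta(\phi))\le 1/c$; the reverse inequality needs uniform integrability, which in turn you propose to deduce from the very first-moment convergence you are trying to establish. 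Both the uniform-integrability step in (4) and the tail estimate $\sup_t\bfQ^\mu_{t,0}(\eta(\phi)\Ind_{\{\eta(\phi)>M\}})\to 0$ in (5) rest on this unproved identity, so as written the argument does not close. In the paper this is broken by an independent probabilistic representation: the spine decomposition identifies the limit $\lim_t e^{-\lambda t}\ProbSupProc_\nu[\Ind_{\{X_t\neq\NullMsr\}}F(X_t)]$ as $\ProbSpnDec[Z_0^{(-\infty,0]}(\phi)^{-1}F(Z_0^{(-\infty,0]})]$ with an explicit dominating random variable coming from Lemma \ref{thm:K.6}, and this is what allows passing moments to the limit.

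Two further points. First, the existence of a positive constant $c$ with $e^{-\lambda r}v_r(x)\to c\phi(x)$ uniformly is itself the content of Propositions \ref{thm:K} and \ref{thm:L} (together with Lemma \ref{thm:M.3}); the positivity of $c$ under $\mathcal E<\infty$ is exactly where the paper uses the spine and the $L\log L$ condition. Your ``Taylor expansion of the cumulant semigroup around zero'' plan for this is too vague to evaluate and, in effect, absorbs the hard part of the theorem into an unproved analytic lemma. Second, the auxiliary claim $v_r\le T_r\Ind_E$ is false in general: Jensen gives $V_rf\le T_rf$ for finite $f$, but this degenerates as $f\uparrow\infty$, and already for a subcritical CSBP with $\psi(z)=az+bz^2$ one has $v_r=a/(b(e^{ar}-1))\to\infty$ while $T_r\Ind_E=e^{-ar}\to 1$ as $r\downarrow 0$. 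The bound $v_r(x)\le Ke^{\lambda r}\phi(x)$ that you actually use can be salvaged for $r$ bounded away from $0$ from Lemma \ref{thm:M.3} together with the boundedness of $e^{-\lambda r}\nu(v_r)$ (Proposition \ref{thm:K}), so this is a correctable detail rather than a fatal error, but it should not be stated as a consequence of $v_r\le T_r\Ind_E$.

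Summarizing: your scheme is an attractive analytic alternative to the spine machinery and is essentially complete conditional on Proposition \ref{thm:M} (and on the positivity of $\mathcal K$ from Proposition \ref{thm:L}); but the proposal does not actually supply a proof of Proposition \ref{thm:M}, and without it the argument for (2), (4), and (5) is circular.
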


\begin{remark}
	If the space $E$ only contains one point, i.e. $E = \{x\}$, the superprocess $X$ degenerates to a continuous-state branching process. In this special case, Assumptions \eqref{asp:H1} and
\eqref{asp:H2}
hold automatically, and the main results of this paper have already been given by \cites{MR408016,MR2299923} and \cite{MR1727226}.
\end{remark}
	\begin{remark}
		If $E = \{x_1,\cdots, x_n\}$ is a finite set and the $E$-valued Markov chain $(\xi_t)_{t\geq 0}$ is irreducible, then the superprocess $X$ degenerates to an irreducible multitype continuous state branching process.
		 In this case, one can verify from the Perron-Frobenius theory that the Assumptions \eqref{asp:H1} and
\eqref{asp:H2} hold.
		If one further assume that the kernel $\pi(x,\mathrm du)= 0$, then our results \eqref{eq:StrongConcide} and \eqref{eq:WeakConside} of Theorem \ref{thm:E.0} were already appeared in \cite{MR2399296}*{Theorem 3.7}.
	\end{remark}

\begin{remark}
	When the branching mechanism $\psi$ is spatially homogeneous,
our Theorem \ref{thm:Q} is an immediate corollary of \cite{MR2760602}*{Theorem 6.8}.
\end{remark}

\subsection*{Overview of the method}
Note that the main results Theorems \ref{thm:Y}-\ref{thm:E.0}
depend only on the transition semigroup $(Q_t)_{t\geq 0}$ of the superprocess $(X_t)_{t\geq 0}$.
Therefore, we can work on any specific realization of $(X_t)_{t\geq 0}$ without loss of generality.
	According to Lemma \ref{thm:A.05}, $(Q_t)_{t\geq 0}$ is a Borel  semigroup on $\FntMsrSp$.
	This and \cite{MR2760602}*{Theorem A.33} allow us to realize the superprocess on the space of $\FntMsrSp$-valued right continuous paths.
	To be more precise, we can, and will, assume the following statements hold throughout the rest of the paper.
\begin{statement}
		$\Omega$ is the space of $\FntMsrSp$-valued right continuous functions on $\RealLine_+$.
\end{statement}
\begin{statement}
			$(X_t)_{t\geq 0}$ is the coordinate process of the path space $\Omega$.
\end{statement}
\begin{statement}
		$(\theta_t)_{t\geq 0}$ are the shift operators on the path space $\Omega$.
\end{statement}
\begin{statement}
		$\mathscr F_t = \sigma(X_s:s\in[0,t])$ and $\mathscr F = \sigma(X_s:s\in \RealLine_+)$.
\end{statement}
\begin{statement}
		For any $\mu\in \FntMsrSp$, $\ProbSupProc_\mu$ is the probability measure on $(\Omega,\srF)$
		so that under $\ProbSupProc_\mu$,
		$X_0 = \mu$ almost surely and that $(X_t)_{t\geq 0}$ is a Markov process with transition semigroup $(Q_t)_{t\geq 0}$.
\end{statement}
	Note that for any $H\in \Bndd\srF$, $\mu \mapsto \ProbSupProc_\mu(H)$ is a measurable function on $\FntMsrSp$.
For any probability measure $\bfP$ on $\FntMsrSp$, we define a probability measure $\bfP\ProbSupProc$ on $(\Omega, \srF)$ by
\[
	(\bfP\ProbSupProc)(A):= \int_\FntMsrSp \ProbSupProc_\mu(A) \bfP(\Diff \mu), \quad A\in \srF.
\]
Denote by $(\srF^\mathrm a, (\srF^\mathrm a_t)_{t\geq 0})$ the augmentation of $(\srF,(\srF_t)_{t\geq 0})$ by the system of probability measures $\{\bfP\ProbSupProc: \bfP \text{ is a probability measure on } \FntMsrSp\}$.
Then, according to \cite{MR2760602}*{Lemma A.33},
\[
X:=(\Omega, \srF^\mathrm a, (\srF^\mathrm a_t)_{t\geq 0}, (X_t)_{t\geq 0}, (\theta_t)_{t\geq 0}, (\mathrm P_\mu)_{\mu\in \FntMsrSp} )
\]
is a Borel right process with transition semigroup $(Q_t)_{t\geq 0}$, i.e., a $(\xi,\psi)$-superprocess.

	We already proved Theorem \ref{thm:Y} in the case $r=0$ in \cite{MR4175472}.
	For the case $r>0$, we will give a stronger result by considering the shifted
	two-sided process $(X_{t+u})_{u\in \mathbb R}$ with the convention  $X_s := \NullMsr$ for $s<0$.
	We will show in Proposition \ref{thm:Y.2} that this two-sided process,
conditioned on $\{X_t \neq \NullMsr\}$, has a limiting process $(Y_u)_{u\in \mathbb R}$ when $t\to\infty$.
	We will obtain this result by analyzing the Laplace transform of the shifted
	two-sided process.

	We will also establish a stronger version of Theorem \ref{thm:Q} by considering the (non-shifted) process
 $(X_u)_{u\geq 0}$ under the condition $\{X_t \neq \NullMsr\}$.
	We will show in Proposition \ref{thm:Q.2} that this process has a limiting process $(\widetilde X_u)_{u\geq 0}$ when $t\to\infty$.
	We obtain this stronger result by a martingale change of measure method.
	The limiting process $(\widetilde X_u)_{u\geq 0}$ is interpreted as a superprocess conditioned on living forever, and is referred to as the Q-process.
	We mention here that the Q-process
	$(\widetilde X_{u})_{u\geq 0}$
	has a different law compared to the process $(Y_t)_{t\geq 0}$ above.
	This Q-process also arises in another type of conditioning, see \cite{MR3035765}.
	The study of the Q-process can be traced back to Lamperti and Ney \cite{MR0228073} where they considered the Q-process for
Galton-Watson processes.
For studies on the Q-processes of other models, we refer our readers
 to \cites{MR2994898, penisson2010conditional} and the references therein.

		For the proofs of Theorems \ref{thm:E} and \ref{thm:E.0},
	we use the spine decomposition theorem for superprocesses.
	Roughly speaking, the Q-process
$(\widetilde X_{u})_{u\geq 0}$
can be decomposed in terms of an immortal particle which moves according to a Markov process and generates pieces of mass evolving according to the law of the unconditioned superprocess.
	This representation for the superprocesses was first obtained by \cite{MR1249698}, and developed and generalized into
the spine decomposition theorem
by \cites{MR3395469,MR2040776,MR2006204,MR2535827,MR4058118,MR4359790}.
	Under Assumption \eqref{asp:H2}, this immortal particle will converge in law to its ergodic equilibrium, and the quantitative information about the Q-process can be obtained using the ergodic theorem.
	
	Our proofs of Theorems \ref{thm:E} and \ref{thm:E.0} adopt a method which can be traced back to Lyons, Pemantle and Peres \cite{MR1349164} where they gave a probabilistic proof	of Heathcote, Seneta and Vere-Jones' $L\log L$ theorem for Galton-Watson processes.
	Let us give some intuition here.
	Note that for the spine decomposition of the Q-process, each piece of mass being generated will vanish eventually since they are subcritical  and non-persistent.
	When the $L\log L$ condition holds, the rate at which masses are created is smaller than the rate at which masses vanish,
and the Q-process will converge to an equilibrium state.
	When the $L\log L$ condition does not hold, the rate at which masses are created is bigger than the rate at which they vanish,
	and the Q-process will not converge to any equilibrium because it accumulates more and more mass.

\subsection*{Organization of the paper}
In Section \ref{sec:Y}, we give the proof of Theorem \ref{thm:Y}.
In Section \ref{sec:Q}, we give the proof of Theorem \ref{thm:Q}.
In Section \ref{sec:E}, we give the proofs of Theorems \ref{thm:E} and \ref{thm:E.0},
and the spine decomposition theorems used
are summarized in Lemmas \ref{thm:K.6} and \ref{thm:T.5}.
In Appendix, we gather the proofs of several technical lemmas.

\section{Proof of Theorem \ref{thm:Y}} \label{sec:Y}
We first
recall
some basic results from \cite{MR4175472}.
Define
\begin{equation} \label{eq:M.24}
v_t(x) := -\log \ProbSupProc_{\delta_x}(X_t = \NullMsr), \quad t\geq 0,x\in E.
\end{equation}
From \eqref{eq:I.13} and the monotone convergence theorem, we get that
\begin{equation} \label{eq:M.25}
	\mu(v_t)
	= -\log \ProbSupProc_\mu(X_t = \NullMsr),
	\quad \mu\in \FntMsrSp, t\geq 0.
\end{equation}
In particular, from \eqref{asp:H3}, we have that $\nu(v_t)<\infty$ for $t>0$.
The following lemma, which is a corollary of \cite{MR4175472}*{Proposition 2.2},
entails that $  \{v_t:t>0\}\subset \BnddPos\Borel(E).$
\begin{lemma} \label{thm:M.3}
	For any $t>0$ and $x\in E$, $v_t(x) = \phi(x)\nu(v_t) (1+\Cr{const:M.3}(t,x))$, where
	$\C\label{const:M.3}(t,x)\in \RealLine$
	satisfies that
	$\lim_{t\to\infty}\sup_{x\in E}|\Cr{const:M.3}(t,x)| = 0$.
\end{lemma}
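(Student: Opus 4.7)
My plan is to derive a recursive identity for $v_t$ from the semigroup property and then apply \eqref{asp:H2} directly. From the Markov property of $X$ and the Laplace transform \eqref{eq:I.13} one gets $v_{t+r}(x) = V_r v_t(x)$. Writing $\psi(y,z) = -\beta(y) z + \tilde\psi(y,z)$ with the nonnegative part $\tilde\psi(y,z) := \sigma(y)^2 z^2 + \int_{(0,\infty)}(e^{-zu}-1+zu)\pi(y,\Diff u)$, Duhamel's principle converts the integral equation defining $V_r f$ into
\[
V_r f(x) = T_r f(x) - \int_0^r T_u\big[\tilde\psi(\cdot, V_{r-u}f)\big](x)\,\Diff u.
\]
Taking $f = v_t$ and using $V_{r-u}v_t = v_{t+r-u}$, I arrive at the key identity
\[
v_{t+r}(x) = T_r v_t(x) - \int_0^r T_u\big[\tilde\psi(\cdot, v_{t+r-u})\big](x)\,\Diff u.
\]

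By \eqref{asp:H3}, $v_s \in L_1^+(\nu)$ for $s > 0$; moreover $v_s$ is bounded for $s > 0$, since $v_s = V_{s/2}v_{s/2} \leq T_{s/2}v_{s/2}$ and the latter is bounded by \eqref{asp:H2}. The integrability of $u \wedge u^2$ against $\pi$ then yields $\tilde\psi(\cdot, v_s) \in L_1^+(\nu)$. Hence \eqref{asp:H2} applies pointwise to $T_r v_t$ and to every $T_u[\tilde\psi(\cdot, v_{t+r-u})]$. Integrating the identity against $\nu$ gives $\nu(v_{t+r}) = e^{\lambda r}\nu(v_t) - \int_0^r e^{\lambda u}\nu(\tilde\psi(\cdot, v_{t+r-u}))\,\Diff u$, and subtracting $\phi(x)\nu(v_{t+r})$ from the displayed identity, short algebra yields
\[
v_{t+r}(x) - \phi(x)\nu(v_{t+r}) = e^{\lambda r}\phi(x)\nu(v_t)H_r v_t(x) - \phi(x)\int_0^r e^{\lambda u}\nu(\tilde\psi(\cdot, v_{t+r-u}))H_u\tilde\psi(\cdot, v_{t+r-u})(x)\,\Diff u.
\]

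Setting $t = r = T/2$ and dividing by $\phi(x)\nu(v_T)$ expresses the lemma's error term (call it $C(T,x)$) as $\rho(T)H_{T/2}v_{T/2}(x)$ minus an $H$-weighted positive-measure integral of total mass $\rho(T) - 1$, where $\rho(T) := e^{\lambda T/2}\nu(v_{T/2})/\nu(v_T) \geq 1$. With $\varepsilon_s := \sup_{y,\,g \in L_1^+(\nu)}|H_s g(y)|$ --- finite for $s > 0$ and tending to $0$ as $s \to \infty$ by \eqref{asp:H2} --- the first term is at most $\rho(T)\varepsilon_{T/2}$. For the integral I split at a fixed $M$: on $[M, T/2]$, $|H_u| \leq \varepsilon_M$, so its contribution is at most $(\rho(T) - 1)\varepsilon_M$; on $[0, M]$, monotonicity of $v_s$ in $s$ together with $\|v_s\|_\infty \to 0$ (which itself follows from $v_{s+1} = V_s v_1 \leq T_s v_1 \lesssim e^{\lambda s}$) and the fact that $\tilde\psi(y,z) = o(z)$ as $z \to 0$ yield $\nu(\tilde\psi(\cdot, v_{T-u})) = o(\nu(v_{T-u}))$ uniformly for $u \in [0, M]$.

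The principal obstacle is the boundedness of $\rho(T)$ as $T \to \infty$, equivalently the two-sided comparison $\nu(v_T) \asymp e^{\lambda T}$. The bound $\rho(T) \geq 1$ is automatic; the matching upper bound $\nu(v_T) \geq c\,e^{\lambda T}$ would be established by a first, crude application of the very identity above (using only the trivial $|H_s| \leq \varepsilon_s$ and sending $s$ large), which then feeds back to bound $\rho(T)$ and upgrades the estimate to the required uniform convergence $\sup_x |C(T, x)| \to 0$.
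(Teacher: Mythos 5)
Your Duhamel identity $v_{t+r}(x)=T_rv_t(x)-\int_0^rT_u[\tilde\psi(\cdot,v_{t+r-u})](x)\,\Diff u$ and the resulting representation of the error term are correct, and note for comparison that the paper does not prove this lemma at all: it quotes it as a corollary of Proposition 2.2 of \cite{MR4175472}, so any self-contained argument is necessarily a different route. However, your argument has a genuine gap, and it is precisely the one you flag: everything hinges on $\sup_{T}\rho(T)<\infty$ with $\rho(T)=e^{\lambda T/2}\nu(v_{T/2})/\nu(v_T)$, and the route you propose for closing it cannot work. The bound $\nu(v_T)\geq c\,e^{\lambda T}$ is, via \eqref{eq:M.25} and Propositions \ref{thm:K} and \ref{thm:L}, equivalent to $\mathcal K>0$, i.e.\ to $\mathcal E<\infty$ --- which is \emph{not} a hypothesis of the lemma. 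The lemma is used throughout in the case $\mathcal E=\infty$ as well (e.g.\ in Lemma \ref{thm:Y.3}, in Lemma \ref{thm:Q.3}, and in Step 2 of the proof of Proposition \ref{thm:M}), so no bootstrap can produce that lower bound. Worse, $\rho(T)$ itself is genuinely unbounded when $\mathcal E=\infty$: writing $m(T)=e^{-\lambda T}\nu(v_T)$, your $\nu$-integrated identity shows $m$ is non-increasing with limit $\mathcal K$, and $\rho(T)=m(T/2)/m(T)$; when $\mathcal K=0$ this ratio can diverge (this is exactly what Lemma \ref{rate lemma} quantifies --- the ratios $e^{\lambda r}\nu(v_t)/\nu(v_{t+r})$ are controlled by $\int \nu(\Psi_0 v_s)/\nu(v_s)\,\Diff s$, whose convergence is where $\mathcal E<\infty$ enters). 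Consequently neither $\rho(T)\varepsilon_{T/2}$ nor $(\rho(T)-1)\varepsilon_M$ tends to zero, and the near-cancellation between $e^{\lambda T/2}\nu(v_{T/2})$ and the Duhamel correction inside $\nu(v_T)$ is exactly the regime the lemma must cover. A correct proof has to compare $v_t(x)$ with $\phi(x)\nu(v_t)$ without ever dividing through by a difference of two nearly equal large terms.

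Two secondary points, both repairable in principle but not addressed: \eqref{asp:H2} gives no control on $\varepsilon_u=\sup_{x,f}|H_uf(x)|$ as $u\downarrow0$, so the contribution of $\int_0^M\varepsilon_u(\cdots)\,\Diff u$ is not obviously finite, and on $[0,M]$ you additionally need $\nu(v_{T-u})/\nu(v_T)$ bounded, which is the same obstacle again; and the claim that $\tilde\psi(y,z)=o(z)$ \emph{uniformly in $y$} does not follow from $\sup_y\int(u\wedge u^2)\pi(y,\Diff u)<\infty$ alone, since that hypothesis gives no uniform integrability of $u\Ind_{\{u>R\}}\pi(y,\Diff u)$ over $y\in E$.
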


We will also use the following fundamental fact
for the subcritical superprocess $X$.
It can be verified, for example, using \eqref{eq:M.25}
and \cite{MR4175472}*{(3.39)}.
\begin{lemma} \label{thm:M.2}
	For any $\mu \in \FntMsrSp$, $\lim_{t\to\infty}\ProbSupProc_\mu(X_t = \NullMsr) = 1$.
\end{lemma}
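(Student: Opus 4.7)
The plan is to reduce the claim to showing that $\sup_{x\in E} v_t(x) \to 0$ as $t\to \infty$, via \eqref{eq:M.25}. The main ingredients are the semigroup identity for $v_t$, a standard Jensen-type domination of $V_t$ by $T_t$, and Assumption \eqref{asp:H2} combined with subcriticality $\lambda < 0$.

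First, the Markov property of $X$ together with \eqref{eq:M.25} gives the semigroup identity $v_{t+s} = V_s v_t$ for all $s,t\geq 0$; in particular $v_{t+1} = V_t v_1$. By \eqref{asp:H3} together with the boundedness statement just below Lemma \ref{thm:M.3}, $v_1 \in \BnddPos\Borel(E)$ and $\nu(v_1)<\infty$.

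Next, I would note the elementary domination $V_t f(x) \leq T_t f(x)$ for $f\in \BnddPos\Borel(E)$: applying Jensen's inequality for the convex function $-\log$ to \eqref{eq:I.13} and combining with \eqref{eq:I.2} yields
\[ V_t f(x) = -\log \ProbSupProc_{\delta_x}\big[e^{-X_t(f)}\big] \leq \ProbSupProc_{\delta_x}[X_t(f)] = T_t f(x). \]
Combining with the previous step, $v_{t+1}(x) \leq T_t v_1(x)$ for all $t\geq 0$ and $x\in E$.

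Finally, Assumption \eqref{asp:H2} gives $T_t v_1(x) = e^{\lambda t} \phi(x)\nu(v_1)\bigl(1+ H_t v_1(x)\bigr)$, where $\phi$ is bounded and $\sup_{x\in E}|H_t v_1(x)|$ is uniformly bounded in $t\geq 1$. Since $\lambda<0$ and $\nu(v_1)<\infty$, $\sup_{x\in E}T_t v_1(x) \to 0$, hence $\sup_{x\in E} v_t(x)\to 0$. For any $\mu\in \FntMsrSp$, $\mu(v_t)\leq \mu(E)\sup_{x\in E} v_t(x) \to 0$, and \eqref{eq:M.25} then gives $\ProbSupProc_\mu(X_t = \NullMsr) = e^{-\mu(v_t)}\to 1$. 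The only step that is not a direct composition of quoted results is the $V_t\leq T_t$ domination, and that is a one-line Jensen computation, so there is no substantive obstacle here.
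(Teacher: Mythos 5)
Your proof is correct, and it takes a genuinely different (and more self-contained) route than the paper's. The paper disposes of this lemma by citing \cite{MR4175472}*{(3.39)}, which yields $\nu(v_t)\to 0$ (and hence, via Lemma~\ref{thm:M.3}, $\sup_{x\in E}v_t(x)\to 0$); that estimate is in turn obtained in the cited paper by analyzing the evolution equation for $s\mapsto\nu(v_s)$. You instead use the flow property $v_{t+1}=V_tv_1$, the one-line Jensen domination $V_tf\le T_tf$ (which appears nowhere in this paper), and then Assumption~\eqref{asp:H2} together with $\lambda<0$ to get $\sup_{x\in E}T_tv_1(x)\to 0$ directly. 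This is cleaner in the sense that it exploits subcriticality through the mean semigroup rather than through the nonlinear ODE; the paper's route, by contrast, gives the sharper two-sided asymptotics for $v_t$ that are actually needed elsewhere (e.g.\ Lemma~\ref{thm:Y.3}). One small imprecision worth noting: Assumption~\eqref{asp:H2} only asserts that $\sup_{x,f}|H_tf(x)|<\infty$ for each fixed $t>0$ and $\to 0$ as $t\to\infty$; it does not a priori give a bound uniform over a compact time interval $[1,t_0]$. But this is harmless, since the second part of \eqref{asp:H2} already ensures $\sup_{x}|H_tv_1(x)|\le 1$ for all large $t$, which is all your final estimate uses. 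Also note that for $v_1\in L_1^+(\nu)$ you only need $\nu(v_1)<\infty$, which follows directly from \eqref{asp:H3} via \eqref{eq:M.25}; the boundedness of $v_1$ is used only to keep $V_t v_1$ inside the domain of the Laplace equation as stated, and is supplied by Lemma~\ref{thm:M.3}.
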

In \cite{MR4175472}, we already showed that there exists a probability measure $\bfQ_{\infty,0}$ on $\FntMsrSp$ such that for every $\mu \in \FntMsrSp^o$, $\bfQ^\mu_{t,0}$ converges weakly to $\bfQ_{\infty,0}$ as $t\to \infty$.
$\bfQ_{\infty,0}$ is known as the Yaglom limit of the superprocess $X$.
It was also proved there that $\mathbf Q_{\infty,0}$ is
the quasi-stationary distribution
for $(X_t)_{t\geq 0}$ with extinction rate $-\lambda$, i.e.,
\begin{equation} \label{eq:Y.05}
	(\bfQ_{\infty,0} \ProbSupProc)(X_r \in \Diff \mu | X_r \neq \NullMsr) = \bfQ_{\infty,0}(\Diff \mu)
\end{equation}
and
\begin{equation} \label{eq:Y.1}
	(\bfQ_{\infty,0}\ProbSupProc)(X_r\neq \NullMsr) = e^{\lambda r} > 0.
\end{equation}
\eqref{eq:Y.1} allows us
to define a probability measure $\bfQ_{\infty,r}$ on $\FntMsrSp$ for any $r\geq 0$ such that
\begin{equation} \label{eq:Y.15}
	\bfQ_{\infty,r}[F] = (\bfQ_{\infty,0}\ProbSupProc)[F(X_0)|X_r\neq \NullMsr], \quad F\in \Bndd\Borel(\FntMsrSp).
\end{equation}
We will prove Theorem \ref{thm:Y} by showing that $\bfQ_{\infty, r}$ is
the weak limit of $\bfQ_{t,r}^\mu$ when
$t\to \infty$ for any $\mu\in\mathcal M^o$.
In fact, we can prove a proposition which is stronger than Theorem \ref{thm:Y}.
To formulate this proposition, we first prove a lemma. We will use the convention $X_t := \NullMsr$ for $t<0$.
\begin{lemma}
\label{thm:Y.1}
There exists a two-sided $\FntMsrSp$-valued process $(Y_u)_{u\in \mathbb R}$ on some probability space such that for any $t>0$, the process $(X_{t+u})_{u\geq -t}$ under  $(\bfQ_{\infty,0}\ProbSupProc)(\cdot|X_{t}\neq \NullMsr)$ has the same finite-dimensional distributions as $(Y_u)_{u\geq -t}$.
\end{lemma}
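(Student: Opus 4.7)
The plan is to construct $(Y_u)_{u\in\mathbb R}$ by Kolmogorov's extension theorem applied to a consistent family of finite-dimensional distributions on the Polish space $\FntMsrSp$. For any finite ordered tuple $\mathbf u = (u_1,\ldots,u_n)$ with $u_1 < \cdots < u_n$ in $\mathbb R$, and any real $t > \max(0,-u_1)$, I define the candidate joint law $\nu_{\mathbf u}^{(t)}$ on $\FntMsrSp^n$ as the distribution of $(X_{t+u_1},\ldots,X_{t+u_n})$ under $(\bfQ_{\infty,0}\ProbSupProc)(\cdot \mid X_t \neq \NullMsr)$. The crux of the argument is to show that $\nu_{\mathbf u}^{(t)}$ is independent of the auxiliary parameter $t$; this is where \eqref{eq:Y.05} and \eqref{eq:Y.1} will be used in combination with the Markov property.

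For the $t$-independence, fix $t_1 < t_2$ both larger than $\max(0,-u_1)$, set $r := t_2 - t_1 > 0$, and let $F$ be an arbitrary bounded measurable function on $\FntMsrSp^n$. Conditioning on $\srF_r$ and invoking the Markov property of the superprocess rewrites
\[
(\bfQ_{\infty,0}\ProbSupProc)\bigl[F(X_{t_2+u_i})_{i=1}^n \, ; \, X_{t_2} \neq \NullMsr\bigr] = E^{\bfQ_{\infty,0}\ProbSupProc}\bigl[G(X_r)\bigr],
\]
where $G(\mu) := \ProbSupProc_\mu\bigl[F(X_{t_1+u_i})_{i=1}^n \, ; \, X_{t_1} \neq \NullMsr\bigr]$. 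Since $G(\NullMsr) = 0$, and since \eqref{eq:Y.05} together with \eqref{eq:Y.1} say that under $\bfQ_{\infty,0}\ProbSupProc$ the law of $X_r$ is $e^{\lambda r}\bfQ_{\infty,0} + (1-e^{\lambda r})\delta_{\NullMsr}$, the right-hand side reduces to $e^{\lambda r}(\bfQ_{\infty,0}\ProbSupProc)\bigl[F(X_{t_1+u_i})_{i=1}^n \, ; \, X_{t_1} \neq \NullMsr\bigr]$. Dividing by $(\bfQ_{\infty,0}\ProbSupProc)(X_{t_2}\neq \NullMsr) = e^{\lambda t_2} = e^{\lambda r}e^{\lambda t_1}$ and applying \eqref{eq:Y.1} a second time at time $t_1$ produces $\nu_{\mathbf u}^{(t_1)} = \nu_{\mathbf u}^{(t_2)}$. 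Denote the common value by $\nu_{\mathbf u}$.

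The remaining Kolmogorov compatibility requirements (invariance under permutations of the indices and agreement of $\nu_J$ with the $J$-marginal of $\nu_I$ whenever $J \subset I$) are automatic, because for any such $I$ and $J$ one can choose a single $t$ admissible for $I$ and then $\nu_I$, $\nu_J$ are by construction the $I$- and $J$-marginals of one and the same probability measure. Since $\FntMsrSp$ is Polish, Kolmogorov's extension theorem supplies a process $(Y_u)_{u\in\mathbb R}$ with finite-dimensional distributions $\nu_{\mathbf u}$. Finally, for any $t > 0$ and any $u_1 < \cdots < u_n$ satisfying $u_i \geq -t$, this very $t$ is admissible in the definition of $\nu_{u_1,\ldots,u_n}$, so the finite-dimensional distributions of $(Y_u)_{u \geq -t}$ coincide with those of $(X_{t+u})_{u \geq -t}$ under $(\bfQ_{\infty,0}\ProbSupProc)(\cdot \mid X_t \neq \NullMsr)$, as required. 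The main obstacle is the consistency computation in the second paragraph; once that algebraic identity is in hand, the rest is bookkeeping.
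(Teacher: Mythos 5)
Your proposal is correct and follows essentially the same route as the paper: establish $t$-independence of the finite-dimensional laws via the Markov property together with the quasi-stationarity relations \eqref{eq:Y.05} and \eqref{eq:Y.1}, then invoke Kolmogorov's extension theorem on the Polish space $\FntMsrSp$ (the paper phrases the $t$-independence through Laplace transforms of linear functionals, while you work directly with bounded measurable $F$, but the computation is the same). One cosmetic point: your admissibility condition $t>\max(0,-u_1)$ should be relaxed to $t+u_1\geq 0$ so that the boundary case $u_1=-t$ in the final claim is covered; the consistency computation goes through unchanged there since $G(\NullMsr)=0$ still holds.
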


\begin{proof}
	We say $G$ is a finite-dimensional $[0,\infty]$-valued linear functional
	of
	$\FntMsrSp$-valued two-sided paths if the following statement holds.
\begin{statement}\label{eq:Y.2}
		There exist a natural number $n$,
		$\{u_i:i=1,\dots,n\}\subset \RealLine$, and a list of $[0,\infty]$-valued Borel functions $(f_i)_{i=1}^n$ on $E$, such that $ G(w) = \sum_{i=1}^n w_{u_i}(f_i)$ for every $\FntMsrSp$-valued two-sided path $w=(w_u)_{u\in \RealLine}$.
\end{statement}
Fix an arbitrary finite-dimensional $[0,\infty]$-valued linear functional $G$ as above.
For any $s\in \mathbb R$,  define $G_s(w): = G(w_{s+\cdot})$ for any $\FntMsrSp$-valued two-sided path $w$.
Then $G_s$ is also a finite-dimensional $[0,\infty]$-valued linear functional.
Fix a time $s\geq 0$ large enough so that $s+u_i\geq 0$ for every $i=1,\cdots,n$.
Since
$X$ is a time-homogeneous Markov process, using \eqref{eq:Y.05} and \eqref{eq:Y.1}, we have that for any $t\geq s$,
 \begin{align}
	&(\mathbf Q_{\infty,0}\ProbSupProc) (e^{-G_t(X)}|X_t\neq \NullMsr)
	= e^{-\lambda t}\cdot (\mathbf Q_{\infty,0}\ProbSupProc)[e^{-G_t(X)}\Ind_{\{X_t\neq \NullMsr\}} ]
       \\&= e^{-\lambda t}\cdot (\mathbf Q_{\infty,0}\ProbSupProc)\big[ \Ind_{\{X_{t-s}\neq \NullMsr\}}\ProbSupProc_{X_{t-s}}[e^{-G_s(X)}\Ind_{\{X_s\neq \NullMsr\}}]\big]
      \\&=e^{-\lambda s}\cdot (\mathbf Q_{\infty,0}\ProbSupProc)\Big[ \ProbSupProc_{X_{t-s}}[e^{-G_s(X)}\Ind_{\{X_s\neq \NullMsr\}}]\Big|X_{t-s}\neq \NullMsr\Big]
      \\&=e^{-\lambda s}\cdot (\mathbf Q_{\infty,0}\ProbSupProc)\big[ e^{-G_s(X)}\Ind_{\{X_s\neq \NullMsr\}}\big]
      = (\mathbf Q_{\infty,0}\ProbSupProc) (e^{-G_s(X)}|X_s\neq \NullMsr).
\end{align}
In other words,
given a finite subset $U=\{u_i: i=1,\dots,n\}\subset \RealLine$ and a large enough $t\geq 0$, the $\FntMsrSp$-valued random vector $(X_{t+u})_{u\in U}$ under the probability $(\mathbf Q_{\infty,0}\ProbSupProc)(\cdot| X_t\neq \NullMsr)$ has a distribution, denoted by
$\mathcal D_U$, which is independent of the choice of $t$.
Using the Markov property, it is easy to verify that this family of finite-dimensional distributions $\mathcal D:=\{\mathcal D_U: \text{$U$ is a finite subset of $\mathbb R$}\}$ satisfies the consistency condition for the Kolmogorov extension theorem.
Therefore, there exists a two-sided $\mathcal M$-valued process $(Y_u)_{u\in \mathbb R}$ whose finite-dimensional distributions are given by $\mathcal D$.

It is a routine to verify that $(Y_u)_{u\in \mathbb R}$ satisfies the desired properties of this lemma.
\end{proof}

Recall that the two-sided indexed process  $(X_{t})_{t\in \mathbb R}$ is
defined with the convention that  $X_s := \NullMsr$ for $s<0$.

\begin{proposition}
	\label{thm:Y.2}
	For any $\mu\in \FntMsrSp^o$, when $t\to\infty$, the $\FntMsrSp$-valued two-sided	process
$(X_{t+u})_{u\in\RealLine}$  under  $\ProbSupProc_\mu(\cdot | X_t\neq \NullMsr)$ converges  to the process $(Y_u)_{u\in \mathbb R}$, given
	in Lemma \ref{thm:Y.1},
in the sense of  finite-dimensional distributions.
\end{proposition}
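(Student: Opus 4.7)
The plan is to verify convergence of all finite-dimensional joint Laplace functionals of the shifted process $(X_{t+u})_{u\in\RealLine}$, which is sufficient for convergence in finite-dimensional distributions on $\FntMsrSp$. Fix $u_1<\cdots<u_n$ in $\RealLine$, $f_1,\dots,f_n\in \BnddPos\Borel(E)$, and set $s:=\max(0,-u_1)$. For $t\geq s$ every time $t+u_i$ is non-negative, and the object to analyze is
\[
R_t(\mu):=\frac{\ProbSupProc_\mu\bigl[e^{-\sum_i X_{t+u_i}(f_i)}\,\Ind_{\{X_t\neq\NullMsr\}}\bigr]}{\ProbSupProc_\mu(X_t\neq\NullMsr)}.
\]

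First I would pick an auxiliary $T\geq s$ and apply the Markov property at time $t-T$ to rewrite $R_t(\mu)$ in terms of the single measure $X_{t-T}$. Writing $\Phi_T(\eta):=\ProbSupProc_\eta\bigl[e^{-\sum_i X_{T+u_i}(f_i)}\Ind_{\{X_T\neq\NullMsr\}}\bigr]$, the identity $\Ind_{\{X_T\neq\NullMsr\}}=\lim_{\theta\to\infty}(1-e^{-\theta X_T(\mathbf 1_E)})$ together with the branching property and \eqref{eq:I.13} applied to the joint cumulant at the times $\{T+u_i\}\cup\{T\}$ yields bounded non-negative Borel functions $W_T^0\leq W_T^\infty$ on $E$ with $\Phi_T(\eta)=e^{-\eta(W_T^0)}-e^{-\eta(W_T^\infty)}$. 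Together with $\ProbSupProc_\mu(X_t\neq\NullMsr)=1-e^{-\mu(v_t)}$, this gives
\[
R_t(\mu)=\frac{e^{-\mu(V_{t-T}W_T^0)}-e^{-\mu(V_{t-T}W_T^\infty)}}{1-e^{-\mu(v_t)}}.
\]

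The crucial analytic input is the uniform asymptotic $V_{t-T}g(x)=\phi(x)\,\nu(V_{t-T}g)\,(1+o(1))$ as $t\to\infty$, uniformly in $x\in E$, for every $g\in\BnddPos\Borel(E)$: for $g=v_T$ this is Lemma~\ref{thm:M.3} (using $V_{t-T}v_T=v_t$), while the extension to arbitrary bounded non-negative $g$ is contained in \cite{MR4175472}*{Proposition 2.2}, of which Lemma~\ref{thm:M.3} is a special case. Combined with subcriticality ($\nu(V_{t-T}g)\to 0$), this yields $1-e^{-\mu(V_{t-T}g)}=\mu(\phi)\,\nu(V_{t-T}g)\,(1+o(1))$ for each such $g$. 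Applying this to $g\in\{W_T^0,W_T^\infty,v_T\}$ causes the factor $\mu(\phi)$ to cancel from $R_t(\mu)$, and
\[
R_t(\mu)=\frac{\nu(V_{t-T}W_T^\infty)-\nu(V_{t-T}W_T^0)}{\nu(v_t)}\,(1+o(1)),
\]
exhibiting the limit (if it exists) as independent of $\mu$.

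To identify this limit I would invoke the $r=0$ case of Theorem~\ref{thm:Y} (from \cite{MR4175472}) applied at $\mu=\nu$: using $\ProbSupProc_\nu[1-e^{-X_t(g)}\mid X_t\neq\NullMsr]=(1-e^{-\nu(V_tg)})/(1-e^{-\nu(v_t)})$ and the preceding asymptotic, the $r=0$ Yaglom convergence gives $\nu(V_tg)/\nu(v_t)\to\bfQ_{\infty,0}[1-e^{-\eta(g)}]$ for every $g\in\BnddPos\Borel(E)$. Substituting $g\in\{W_T^0,W_T^\infty,v_T\}$ and using \eqref{eq:Y.1} (which gives $\bfQ_{\infty,0}[1-e^{-\eta(v_T)}]=e^{\lambda T}$), the limit of $R_t(\mu)$ becomes
\[
\frac{\bfQ_{\infty,0}[\Phi_T]}{e^{\lambda T}}=(\bfQ_{\infty,0}\ProbSupProc)\bigl[e^{-\sum_i X_{T+u_i}(f_i)}\bigm|X_T\neq\NullMsr\bigr]=\mathbf E\bigl[e^{-\sum_i Y_{u_i}(f_i)}\bigr],
\]
where the last equality comes from Lemma~\ref{thm:Y.1} (valid since $u_i\geq-T$ for all $i$). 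The hard part is ensuring the uniform asymptotic $V_{t-T}g=\phi\,\nu(V_{t-T}g)(1+o(1))$ really applies to the (not necessarily small) functions $W_T^0,W_T^\infty$ built in the first step, and that the $r=0$ Yaglom convergence of \cite{MR4175472} is available in the Laplace-functional form for bounded positive Borel $g$ rather than only via weak convergence on $\FntMsrSp$; once both are secured, joint Laplace convergence delivers the claimed finite-dimensional convergence.
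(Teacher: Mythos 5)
Your proof follows essentially the same route as the paper's: decompose via the Markov property at time $t-T$ so that all time points become non-negative, express the resulting conditional expectation through log-Laplace functions $W_T^0,W_T^\infty$ (the paper's $v_{G_s},v_{\tilde G_s}$), and pass to the limit via the uniform asymptotics of $V_{t-T}$ together with the $r=0$ Yaglom convergence and the ratio-of-survival-probabilities asymptotic. The two concerns you flag at the end are exactly what the paper supplies: its Lemma \ref{thm:Y.4} gives the Yaglom Laplace-functional convergence for general $[0,\infty]$-valued Borel test functions (thereby sidestepping the need to argue that $W_T^\infty$ is bounded), and Lemma \ref{thm:Y.3} gives $\mathrm P_\eta(X_{t-s}\neq\NullMsr)/\mathrm P_\mu(X_t\neq\NullMsr)\to e^{-\lambda s}\eta(\phi)/\mu(\phi)$.
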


We first explain that the above proposition is indeed stronger than Theorem \ref{thm:Y}.
\begin{proof}[Proof of Theorem \ref{thm:Y}]
Fix arbitrary $r\geq 0$ and
$F\in \Bndd C(\FntMsrSp)$.
	Using Proposition \ref{thm:Y.2} and Lemma \ref{thm:Y.1}, we have
\begin{align}
	&\bfQ_{t,r}^\mu[F] = \ProbSupProc_\mu[F(X_{(t+r)-r}) |X_{t+r}\neq 0]
     \\&\xrightarrow[t\to \infty]{} \mathbb E[F(Y_{-r})]
      =(\bfQ_{\infty,0}\ProbSupProc)[F(X_0)|X_{r}\neq \NullMsr]
      = \bfQ_{\infty,r}[F]
\end{align}
as desired.
\end{proof}

Before we prove Proposition \ref{thm:Y.2}, we first present the following two lemmas.

\begin{lemma}\label{thm:Y.4}
	For any $\mu \in \FntMsrSp^o$ and $[0,\infty]$-valued Borel function $f$ on $E$,
\[
	\int_\FntMsrSp e^{-\eta(f)}\bfQ^\mu_{t,0}(\Diff \eta)
	\xrightarrow[t\to\infty]{} \int_\FntMsrSp e^{-\eta(f)} \bfQ_{\infty,0}(\Diff\eta).
\]
\end{lemma}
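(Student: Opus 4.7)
My plan is to reduce the claim to the asymptotic analysis of the real scalars $\mu(V_tf)$ and $\mu(v_t)$. For $f\in\BnddPos\Borel(E)$, splitting the expectation on $\{X_t=\NullMsr\}$, using that $e^{-X_t(f)}=1$ on this event, and applying \eqref{eq:I.13} and \eqref{eq:M.25} gives the explicit identity
\[
L_t(f):=\int_\FntMsrSp e^{-\eta(f)}\bfQ^\mu_{t,0}(\Diff\eta)=\frac{e^{-\mu(V_tf)}-e^{-\mu(v_t)}}{1-e^{-\mu(v_t)}}.
\]
Approximating $f$ from below by $f_n:=f\wedge n$ and invoking monotone and dominated convergence on both sides (note $V_tf_n\uparrow V_tf$ by monotonicity of the cumulant semigroup) will extend this identity to every $[0,\infty]$-valued Borel $f$. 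In this form the dependence of $L_t(f)$ on $f$ is captured entirely by the two scalars $\mu(V_tf)$ and $\mu(v_t)$, which is what will let me go beyond weak convergence and handle non-continuous test functions.

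For $f$ bounded, continuous, and non-negative, the map $\eta\mapsto e^{-\eta(f)}$ is bounded and continuous on $\FntMsrSp$, so the desired convergence $L_t(f)\to L_\infty(f):=\int_\FntMsrSp e^{-\eta(f)}\bfQ_{\infty,0}(\Diff\eta)$ is immediate from the weak convergence $\bfQ^\mu_{t,0}\to\bfQ_{\infty,0}$ proved in \cite{MR4175472}. Because that Yaglom result is established in \cite{MR4175472} precisely through Laplace functional convergence for all $f\in\BnddPos\Borel(E)$, I will import the stronger statement
\[
\frac{e^{-\mu(V_tf)}-e^{-\mu(v_t)}}{1-e^{-\mu(v_t)}}\xrightarrow[t\to\infty]{} L_\infty(f),\quad f\in\BnddPos\Borel(E),
\]
which disposes of the bounded Borel case.

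To upgrade from bounded Borel to $[0,\infty]$-valued Borel $f$, I use the approximation $f_n:=f\wedge n\uparrow f$ once more. Monotone convergence in $\eta$ gives $L_t(f_n)\downarrow L_t(f)$ and $L_\infty(f_n)\downarrow L_\infty(f)$ as $n\to\infty$. Since $L_t(f)\le L_t(f_n)$ and $L_t(f_n)\to L_\infty(f_n)$ as $t\to\infty$, letting $t\to\infty$ first and then $n\to\infty$ immediately yields $\limsup_{t\to\infty}L_t(f)\le L_\infty(f)$. The reverse bound $\liminf_{t\to\infty}L_t(f)\ge L_\infty(f)$ is the main obstacle, because the monotone approximation runs the wrong way for a direct estimate of $L_t(f)$ from below. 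My plan is to argue through the explicit formula: noting that $\mu(v_t)\to 0$ (by Lemma \ref{thm:M.2} together with dominated convergence, since $v_t$ is monotone in $t$), a first-order expansion gives $L_t(f)=1-\mu(V_tf)/\mu(v_t)+O(\mu(v_t))$, so it suffices to show $\mu(V_tf)/\mu(v_t)\to 1-L_\infty(f)$. Lemma \ref{thm:M.3} already provides the uniform asymptotic $v_t=\phi\nu(v_t)(1+o(1))$; combined with an analogous asymptotic $V_tf=\phi\,\alpha_t(f)(1+o(1))$ uniform in $x\in E$ (extractable from \cite{MR4175472} by applying the same methods as in Lemma \ref{thm:M.3} to general $V_tf$ rather than just $v_t$), this reduces the problem to the convergence of the scalar ratio $\alpha_t(f)/\nu(v_t)$, for which the bounded-Borel case from the previous paragraph plus a final monotone-convergence step closes the argument. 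The heart of the matter is thus to secure enough uniformity in $f$ of the eigenfunction-type asymptotic for $V_tf$ to carry through the limit $f_n\uparrow f$ in the unbounded regime.
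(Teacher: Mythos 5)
There is a genuine gap in the final step. Your reduction to the scalar ratio is fine: writing $L_t(f)=\frac{e^{-\mu(V_tf)}-e^{-\mu(v_t)}}{1-e^{-\mu(v_t)}}$ and using $\mu(v_t)\to 0$, the claim is equivalent to $\mu(V_tf)/\mu(v_t)\to 1-L_\infty(f)$, and the uniform eigenfunction asymptotics reduce this further to $\nu(V_tf)/\nu(v_t)\to 1-L_\infty(f)$. But your two inequalities are not independent. Since $f_n\leq f$ gives $V_tf_n\leq V_tf$, the monotone approximation from below yields $\liminf_{t\to\infty}\nu(V_tf)/\nu(v_t)\geq \sup_n\big(1-L_\infty(f_n)\big)=1-L_\infty(f)$, which is \emph{exactly} the inequality $\limsup_{t\to\infty}L_t(f)\leq L_\infty(f)$ you already obtained in the easy direction. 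The missing inequality $\liminf_{t\to\infty}L_t(f)\geq L_\infty(f)$ is equivalent to $\limsup_{t\to\infty}\nu(V_tf)/\nu(v_t)\leq 1-L_\infty(f)$, and for that you would need to approximate $f$ from \emph{above} by bounded functions, which is impossible when $f$ is unbounded or takes the value $+\infty$. So the "final monotone-convergence step" closes nothing; the hard direction is left exactly as hard as it started.

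A correct route to the unbounded case is through the semigroup property rather than truncation: for any $s>0$ the function $V_sf$ is bounded (it is dominated by $v_s\in\BnddPos\Borel(E)$), so the bounded-Borel case applies to $g:=V_sf$, giving
\[
\frac{\mu(V_{t+s}f)}{\mu(v_{t+s})}
=\frac{\mu(V_t(V_sf))}{\mu(v_t)}\cdot\frac{\mu(v_t)}{\mu(v_{t+s})}
\xrightarrow[t\to\infty]{}
\Big(\int_\FntMsrSp\big(1-e^{-\eta(V_sf)}\big)\bfQ_{\infty,0}(\Diff\eta)\Big)\cdot e^{-\lambda s}
=1-L_\infty(f),
\]
where the last identity uses the quasi-stationarity \eqref{eq:Y.05}--\eqref{eq:Y.1} of $\bfQ_{\infty,0}$ together with $\ProbSupProc_\eta[1-e^{-X_s(f)}]=1-e^{-\eta(V_sf)}$. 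The paper itself does not reprove any of this: it simply invokes \cite{MR4175472}*{Proposition 2.3 \& (2.9)}, where the convergence of $\nu(V_tf)/\nu(v_t)$ is established directly for $[0,\infty]$-valued Borel $f$. Your proposal imports the bounded case from the same source but then attempts the extension by an argument that cannot deliver the required inequality.
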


Lemma \ref{thm:Y.4} follows from \cite{MR4175472}*{Proposition 2.3 \& (2.9)}.

\begin{lemma}\label{thm:Y.3}
For any $\eta,\mu\in \FntMsrSp^o$ and $s\in \RealLine_+$, it holds that
\begin{equation}
	\frac{\mathrm P_{\eta}(X_{t-s} \neq \NullMsr)}{\mathrm P_\mu(X_{t} \neq \NullMsr)}
	\xrightarrow[t\to \infty]{} \frac{e^{-\lambda s}\eta(\phi)}{\mu(\phi) }.
\end{equation}
\end{lemma}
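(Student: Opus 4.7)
The plan is to split the target ratio multiplicatively as
\begin{equation}
\frac{\ProbSupProc_\eta(X_{t-s}\neq \NullMsr)}{\ProbSupProc_\mu(X_t\neq \NullMsr)} = \frac{\ProbSupProc_\eta(X_{t-s}\neq \NullMsr)}{\ProbSupProc_\mu(X_{t-s}\neq \NullMsr)}\cdot\frac{\ProbSupProc_\mu(X_{t-s}\neq \NullMsr)}{\ProbSupProc_\mu(X_t\neq \NullMsr)},
\end{equation}
and to show that the two factors tend to $\eta(\phi)/\mu(\phi)$ and $e^{-\lambda s}$ respectively; the lemma then follows by multiplication.

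For the first factor, the idea is that the spatial shape of $\ProbSupProc_\mu(X_t\neq\NullMsr)$ in $\mu$ is governed by $\phi$. By \eqref{eq:M.25} we have $\ProbSupProc_\mu(X_t\neq\NullMsr)=1-e^{-\mu(v_t)}$, and Lemma \ref{thm:M.2} together with the finiteness of $\mu$ forces $\mu(v_t)\to 0$, so $\ProbSupProc_\mu(X_t\neq\NullMsr)\sim \mu(v_t)$ as $t\to\infty$. The asymptotic expansion from Lemma \ref{thm:M.3}, combined with the boundedness of $\phi$ and the uniform decay $\sup_x|\Cr{const:M.3}(t,x)|\to 0$, then yields $\mu(v_t)\sim \mu(\phi)\nu(v_t)$. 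The same asymptotic holds with $\mu$ replaced by $\eta$ and $t$ by $t-s$, and dividing makes the unknown factor $\nu(v_{t-s})$ cancel, leaving the limit $\eta(\phi)/\mu(\phi)$.

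The second factor is the heart of the matter, and the key step is to apply Lemma \ref{thm:Y.4} to the particular test function $f=v_s\in\BnddPos\Borel(E)$. The Markov property at time $t$ and the identity $\ProbSupProc_{X_t}(X_s=\NullMsr)=e^{-X_t(v_s)}$ give
\begin{equation}
\int_\FntMsrSp e^{-\eta(v_s)}\bfQ^\mu_{t,0}(\Diff\eta) = \ProbSupProc_\mu\bigl[\,\ProbSupProc_{X_t}(X_s=\NullMsr)\bigm| X_t\neq\NullMsr\,\bigr],
\end{equation}
and the inclusion $\{X_t=\NullMsr\}\subset\{X_{t+s}=\NullMsr\}$ collapses the right-hand side to $1-\ProbSupProc_\mu(X_{t+s}\neq\NullMsr)/\ProbSupProc_\mu(X_t\neq\NullMsr)$. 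Lemma \ref{thm:Y.4} sends the left-hand side to $1-\int e^{-\eta(v_s)}\bfQ_{\infty,0}(\Diff\eta)=(\bfQ_{\infty,0}\ProbSupProc)(X_s\neq\NullMsr)=e^{\lambda s}$, the last equality being the quasi-stationarity identity \eqref{eq:Y.1}. Replacing $t$ by $t-s$ produces the desired limit $e^{-\lambda s}$ for the second factor. The only step in the whole argument that requires any real insight is the selection of $v_s$ as the test function in Lemma \ref{thm:Y.4}: this is the channel through which the quasi-stationarity relation is converted into the exponential decay rate of the survival probability, and it completely bypasses any direct asymptotic analysis of the nonlinear semigroup $(V_t)_{t\geq 0}$.
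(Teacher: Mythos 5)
Your proof is correct, and while the treatment of the spatial factor coincides with the paper's (Lemma \ref{thm:M.3} gives $\mu(v_t)\sim\mu(\phi)\nu(v_t)$, Lemma \ref{thm:M.2} forces $\mu(v_t)\to 0$ so $1-e^{-\mu(v_t)}\sim\mu(v_t)$), the derivation of the temporal factor $e^{-\lambda s}$ is genuinely different. The paper writes the ratio as $\eta(v_{t-s})/\mu(v_t)$ in one go and imports the exponential decay of $\nu(v_t)$ directly from \cite{MR4175472}*{(3.20)}, which is an explicit asymptotic for the nonlinear cumulant semigroup proved there. You instead apply Lemma \ref{thm:Y.4} with the test function $f=v_s$, observe that absorption at $\NullMsr$ turns $\int e^{-\eta(v_s)}\bfQ^\mu_{u,0}(\Diff\eta)$ into $1-\ProbSupProc_\mu(X_{u+s}\neq\NullMsr)/\ProbSupProc_\mu(X_u\neq\NullMsr)$, and read off the limit $1-e^{\lambda s}$ from the quasi-stationarity identity \eqref{eq:Y.1}. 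This replaces a direct citation of an asymptotic formula with a soft argument that extracts the decay rate from the already-established Yaglom limit, at the cost of being slightly less self-contained in spirit (it still relies on \cite{MR4175472} through Lemma \ref{thm:Y.4}, just via a different numbered result) and requiring the extra observation that $\{X_u=\NullMsr\}\subset\{X_{u+s}=\NullMsr\}$. Both approaches work; yours has the pedagogical advantage of making visible the mechanism by which quasi-stationarity encodes the exponential survival rate, which is exactly the theme of the section.
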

\begin{proof}
	Let $v_t(x)$ be given as in \eqref{eq:M.24}.
We have from Lemma \ref{thm:M.3}, \cite{MR4175472}*{(3.20)}
and the bounded convergence theorem that, for any $\eta, \mu\in \FntMsrSp^o$ and
$s\geq 0$,
	\begin{align}
&\lim_{t\to \infty}\dfrac{ \eta(v_{t-s}) }{ \mu(v_t) }
=\lim_{t\to \infty} \frac{\nu(v_{t-s}) \int \phi(x)(1+\Cr{const:M.3}(t-s,x))\eta(\Diff x)}{\nu(v_t) \int \phi(x)(1+\Cr{const:M.3}(t,x))\mu(\Diff x)}
= \frac{e^{-\lambda s} \eta(\phi)}{\mu(\phi)}.
	\end{align}
Thus we have by \eqref{eq:M.25} and \cite{MR4175472}*{(3.39)} that,
	\begin{equation}
\lim_{t\to\infty}\frac{\mathrm P_{\eta}(X_{t-s}\neq \NullMsr)}{\mathrm P_\mu(X_t\neq \NullMsr)}
=\lim_{t\to\infty}\frac{1-e^{- \eta(v_{t-s}) }}{1-e^{- \mu(v_t) }}
=\lim_{t\rightarrow\infty}\dfrac{ \eta(v_{t-s}) }{ \mu(v_t) }
= \frac{e^{-\lambda s} \eta(\phi)}{\mu(\phi)}.
		\qedhere
	\end{equation}
\end{proof}

\begin{proof}[Proof of Proposition \ref{thm:Y.2}]
	To prove the convergence of the processes, we verify the convergence of all the Laplace transforms of the finite-dimensional linear functional.
Fix an arbitrary $\mu\in \FntMsrSp^o$ and an arbitrary finite-dimensional $[0,\infty]$-valued linear functional
$G$  defined in \eqref{eq:Y.2}.
It can be verified using \eqref{eq:I.13}, the Markov property and induction that
there exists a $[0,\infty]$-valued Borel function $v_G$ on $E$, which depends on the choice of $G$ but not on $\mu$, such that $\ProbSupProc_\mu[\exp\{-G(X)\}] = \exp\{-\mu(v_G)\}$.
Fix a time $s\geq 0$ large enough so that $s+u_i\geq 0$ for every $i=1,\cdots,n$.
	From the Markov property, we can verify that for any $t\geq s$,
	\begin{align}
		&\ProbSupProc_\mu(e^{-G(X_{t+\cdot})}|X_t\neq \NullMsr)
		= \frac{\ProbSupProc_\mu[e^{-G(X_{t+\cdot})}\Ind_{\{X_t\neq \NullMsr\}} ]}{\ProbSupProc_\mu(X_t\neq 0)}
		= \frac{\ProbSupProc_\mu\big[ \ProbSupProc_{X_{t-s}}[e^{-G(X_{s+\cdot})}\Ind_{\{X_s\neq \NullMsr\}}]\big]}{\ProbSupProc_\mu(X_t\neq 0)}
		\\&=\frac{\ProbSupProc_\mu(X_{t-s} \neq \NullMsr)}{\ProbSupProc_\mu(X_t\neq \NullMsr)}\ProbSupProc_\mu\Big[ \ProbSupProc_{X_{t-s}}[e^{-G(X_{s+\cdot})}\Ind_{\{X_s\neq \NullMsr\}}]\Big|X_{t-s}\neq \NullMsr\Big]
		\\ \label{eq:Confusing}&=\frac{\ProbSupProc_\mu(X_{t-s} \neq \NullMsr)}{\ProbSupProc_\mu(X_t\neq \NullMsr)}\ProbSupProc_\mu\Big[ e^{-X_{t-s}(v_{G_s})}-e^{-X_{t-s}(v_{\tilde G_s})} \Big|X_{t-s}\neq \NullMsr\Big]
	\end{align}
	where $G_s$ and $\tilde G_s$ are finite-dimensional $[0,\infty]$-valued linear functional defined so that $G_s (w) = G(w_{s+\cdot})$ and $\tilde G_s(w) = G(w_{s+\cdot}) +  w_s(\infty \mathbf 1_E)$ for any $\FntMsrSp$-valued two-sided path $w$.
	In fact, \eqref{eq:Confusing} holds because for any $\eta \in \FntMsrSp$,
\[
	\ProbSupProc_{\eta}[e^{-G(X_{s+\cdot})}\Ind_{\{X_s\neq \NullMsr\}} ]
	= \ProbSupProc_{\eta}[e^{-G_s(X)} - e^{-\tilde G_s (X)}]
	= e^{-\eta(v_{G_s})} - e^{-\eta(v_{\tilde G_s})}.
\]
Now from Lemmas \ref{thm:Y.1}, \ref{thm:Y.4}, \ref{thm:Y.3} and \eqref{eq:Y.1}, we have that
\begin{align}
		&\lim_{t\to \infty}\ProbSupProc_\mu(e^{-G(X_{t+\cdot})}|X_t\neq \NullMsr)
	= e^{-\lambda s} \int_\FntMsrSp (e^{-\eta(v_{G_s})}-e^{-\eta(v_{\tilde G_s})}) \bfQ_{\infty,0}(\Diff \eta)
	\\&= e^{-\lambda s} \int_\FntMsrSp \ProbSupProc_{\eta}[e^{-G(X_{s+\cdot})}\Ind_{\{X_s\neq \NullMsr\}}] \bfQ_{\infty,0}(\Diff \eta)
	=  (\bfQ_{\infty,0}\ProbSupProc)[e^{-G(X_{s+\cdot})}|X_s\neq \NullMsr]
	=  \mathbb E[e^{-G(Y)}].
\end{align}
This and \cite{MR2760602}*{Theorem 1.18} imply the desired result.
\end{proof}

\section{Proof of Theorem \ref{thm:Q}} \label{sec:Q}

According to \eqref{eq:I.2} and \eqref{asp:H1}, we have $\ProbSupProc_\mu[X_t(\phi)] = e^{\lambda t}\mu(\phi)\in (0,\infty)$ for any $t\in\RealLine_+$ and $\mu\in \FntMsrSp^o$.
This allows us to define, for any $t\in \RealLine_+$ and $\mu\in \FntMsrSp^o$, a probability measure $\bfQ^\mu_{t,\infty}$ on $(\FntMsrSp, \Borel(\FntMsrSp))$ such that
\[
	\bfQ^\mu_{t,\infty}[F]
	= \ProbSupProc_\mu\Big[\frac{X_t(\phi)}{e^{\lambda t} \mu(\phi)}\cdot F(X_t) \Big],
	\quad F\in \Bndd\Borel(\FntMsrSp).
\]
We will prove Theorem \ref{thm:Q} by showing that $\bfQ^\mu_{t,\infty}$ is
the strong limit of $\bfQ_{t,r}^\mu$ when $r\to \infty$.

In fact, we can prove a result which is stronger than Theorem \ref{thm:Q}.  Before presenting this result, we introduce some notation and give a technical lemma. Denote by $\ProbQProc^{(t)}_\mu$ the law of the $\FntMsrSp$-valued process $(X_r)_{r\geq 0}$ under
$\ProbSupProc_\mu(\cdot|X_t\neq \NullMsr)$.
More precisely, for any $t\geq 0$ and $\mu\in \FntMsrSp^o$, define  $\ProbQProc^{(t)}_\mu$ as the probability measure on $\Omega$ such that
\[
	\ProbQProc^{(t)}_\mu[H]
	= \ProbSupProc_\mu[H |X_t\neq \NullMsr ],
	\quad H\in \BnddPos\srF.
\]
The following lemma can be verified from \cite{MR958914}*{Theorem 62.19}.

\begin{lemma} \label{thm:Q.1}
	For any $\mu \in \FntMsrSp^o$, there exists a unique probability measure $\ProbQProc^{(\infty)}_\mu $ on $(\Omega, \srF)$ such that for any $s \geq 0$ and
		$H\in \Pos\srF_{s}$,
	it holds that
	\[
		\ProbQProc_\mu^{(\infty)}[H] = \ProbSupProc_\mu\Big[\frac{X_{s}(\phi)}{e^{\lambda s}\mu(\phi)}\cdot H\Big].
	\]
\end{lemma}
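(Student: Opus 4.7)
The plan is to carry out the classical martingale change of measure. First I would verify that the process
\[
M_s := \frac{X_s(\phi)}{e^{\lambda s}\mu(\phi)}, \quad s\geq 0,
\]
is a non-negative, mean-one $(\srF_s)$-martingale under $\ProbSupProc_\mu$. This uses only the Markov property of the superprocess together with the mean formula \eqref{eq:I.2} and Assumption \eqref{asp:H1}: for $0\leq s\leq s'$ one has
\[
\ProbSupProc_\mu[X_{s'}(\phi)\mid \srF_s] = \ProbSupProc_{X_s}[X_{s'-s}(\phi)] = X_s(T_{s'-s}\phi) = e^{\lambda(s'-s)}X_s(\phi),
\]
so after normalisation $\ProbSupProc_\mu[M_{s'}\mid \srF_s] = M_s$. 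Strict positivity of $\phi$ and the assumption $\mu\in \FntMsrSp^o$ ensure the denominator is nonzero.

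Next, for each $s\geq 0$ I would define a probability measure on $(\Omega,\srF_s)$ by $\ProbQProc^{(\infty)}_{\mu,s}[H]:=\ProbSupProc_\mu[M_s H]$ for $H\in \BnddPos\srF_s$. The martingale property gives consistency: for $s\leq s'$ and $H\in \BnddPos\srF_s$, the tower property yields $\ProbSupProc_\mu[M_{s'} H]=\ProbSupProc_\mu[M_s H]$. Since the algebra $\bigcup_{s\geq 0}\srF_s$ generates $\srF$, the monotone class theorem shows that any probability measure on $(\Omega,\srF)$ which extends this consistent family is unique, giving the uniqueness assertion of the lemma.

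It remains to establish existence, and this is the only nontrivial step. A consistent family of finite-dimensional marginals does not in general piece together into a single probability measure on the full $\sigma$-algebra without some topological regularity. This is exactly what Theorem 62.19 of Sharpe \cite{MR958914} supplies in the setting of Borel right processes: given a non-negative mean-one martingale, the locally absolutely continuous family of changed measures extends to a unique probability measure on the canonical path space. Because the superprocess has already been realised on $\Omega = D(\RealLine_+,\FntMsrSp)$ as a Borel right process in the Overview of the method, Sharpe's hypotheses are met and the desired $\ProbQProc^{(\infty)}_\mu$ exists.

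The main obstacle, were one to avoid invoking Sharpe's theorem, would be handling this extension by hand --- one would need to exploit the right-continuity of paths and the inner regularity of the superprocess law on $\Omega$ to carry out a Kolmogorov-type construction, while being careful with the null sets in the augmented filtration. Appealing to Sharpe's result bypasses this bookkeeping entirely, and all other steps are routine verifications.
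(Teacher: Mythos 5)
Your proposal is correct and follows essentially the same route as the paper: the paper likewise verifies the lemma by invoking Sharpe's Theorem 62.19 after noting that $s\mapsto X_s(\phi)/(e^{\lambda s}\mu(\phi))$ is a non-negative mean-one martingale. You have simply spelled out the preliminary steps (martingale property via \eqref{eq:I.2} and \eqref{asp:H1}, consistency, uniqueness by a monotone class argument) that the paper leaves implicit.
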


We say a family  $(\mathrm R^{(t)})_{t\geq 0}$ of probability measures
on $\Omega$ converges, as $t\to\infty$,  locally strongly to
a probability measure $\mathrm R$ on $\Omega$ if for any $s\geq 0$ and
 $H\in \BnddPos\srF_{s}$
 it holds that $\lim_{t\to\infty}\mathrm R^{(t)}(H)=\mathrm R(H)$.
The following proposition is the main result of this section, and it is stronger than Theorem \ref{thm:Q}.

\begin{proposition}
	\label{thm:Q.2}
	For any $\mu\in \FntMsrSp^o$, $\ProbQProc^{(t)}_{\mu}$ converges to $\ProbQProc^{(\infty)}_\mu$
locally strongly as
	$t\to \infty$.
\end{proposition}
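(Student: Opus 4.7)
The plan is to reduce the claim to a dominated convergence argument based on the Markov property. Fix $s \geq 0$ and $H \in \BnddPos\srF_s$. For $t \geq s$, the Markov property at time $s$ together with the defining formula for $\ProbQProc_\mu^{(t)}$ gives
\[
\ProbQProc_\mu^{(t)}[H] = \ProbSupProc_\mu\Big[H \cdot \frac{\ProbSupProc_{X_s}(X_{t-s}\neq \NullMsr)}{\ProbSupProc_\mu(X_t \neq \NullMsr)}\Big].
\]
By Lemma \ref{thm:Q.1}, establishing Proposition \ref{thm:Q.2} reduces to sending $t\to\infty$ inside this expectation and identifying the limit with $\ProbSupProc_\mu[H \cdot X_s(\phi)/(e^{\lambda s}\mu(\phi))]$. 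I would therefore prove (i) pointwise $\ProbSupProc_\mu$-almost sure convergence of the random ratio and (ii) a uniform-in-$t$ integrable dominating function, and then apply dominated convergence.

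Step (i) is a direct consequence of Lemma \ref{thm:Y.3} applied pathwise with $\eta = X_s(\omega)$: on the event $\{X_s \neq \NullMsr\}$ the ratio converges to $e^{-\lambda s} X_s(\phi)/\mu(\phi)$, while on $\{X_s = \NullMsr\}$ both the ratio and its putative limit vanish. For step (ii) I would combine three ingredients, all driven by the uniform-in-space asymptotic $v_t(x) = \phi(x)\nu(v_t)(1+o(1))$ of Lemma \ref{thm:M.3}. First, the elementary bound $\ProbSupProc_{X_s}(X_{t-s}\neq \NullMsr) = 1 - e^{-X_s(v_{t-s})} \leq X_s(v_{t-s})$ combined with $v_{t-s}(x) \leq 2\phi(x)\nu(v_{t-s})$ (valid uniformly in $x$ once $t-s$ is large) yields $\ProbSupProc_{X_s}(X_{t-s}\neq \NullMsr) \leq 2\nu(v_{t-s}) X_s(\phi)$. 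Second, since $\mu(v_t)\to 0$ by Lemma \ref{thm:M.2} and \eqref{eq:M.25}, and since $\mu(v_t) \geq \tfrac12\mu(\phi)\nu(v_t)$ for $t$ large by Lemma \ref{thm:M.3}, the elementary inequality $1-e^{-x}\geq x/2$ on $[0,1]$ yields $\ProbSupProc_\mu(X_t\neq \NullMsr) \geq \tfrac14 \mu(\phi)\nu(v_t)$ for $t$ large. Third, the convergence $\nu(v_{t-s})/\nu(v_t) \to e^{-\lambda s}$ (implicit in the proof of Lemma \ref{thm:Y.3}) cancels the surviving $\nu(\cdot)$ factors, bounding the whole random ratio by a constant multiple of $X_s(\phi)/\mu(\phi)$ for all $t$ sufficiently large. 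This dominating function is $\ProbSupProc_\mu$-integrable since $\ProbSupProc_\mu[X_s(\phi)] = e^{\lambda s}\mu(\phi)<\infty$ by \eqref{eq:I.2} and \eqref{asp:H1}.

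The main obstacle is assembling the uniform-in-$t$ domination: the pointwise convergence is essentially immediate from Lemma \ref{thm:Y.3}, but interchanging the limit with the expectation requires both the upper bound on $v_{t-s}(x)$ and the matching lower bound on $\mu(v_t)$ to be uniform in $x$, which is exactly the content of the uniform remainder estimate in Lemma \ref{thm:M.3}. Once this uniform control is in place, dominated convergence identifies $\lim_{t\to\infty}\ProbQProc_\mu^{(t)}[H]$ with $\ProbQProc_\mu^{(\infty)}[H]$, completing the proof for arbitrary $s \geq 0$ and $H \in \BnddPos\srF_s$.
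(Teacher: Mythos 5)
Your proposal is correct and follows essentially the same route as the paper: rewrite $\ProbQProc_\mu^{(t)}[H]$ via the Markov property as an expectation of $H$ against the ratio $\ProbSupProc_{X_s}(X_{t-s}\neq\NullMsr)/\ProbSupProc_\mu(X_t\neq\NullMsr)$, use Lemma~\ref{thm:Y.3} for the pointwise limit, and pass to the limit by dominated convergence. The only difference is cosmetic: the paper factors out your uniform bound of the ratio by a constant multiple of $X_s(\phi)$ as a separate statement (Lemma~\ref{thm:Q.3}) whereas you re-derive it inline using the same ingredients (Lemma~\ref{thm:M.3}, \eqref{eq:M.25}, and the elementary bounds on $1-e^{-x}$).
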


\begin{proof}[Proof of Theorem \ref{thm:Q}]
	We can verify using Lemma \ref{thm:Q.1} and Proposition \ref{thm:Q.2}
	that for any $t\in \RealLine_+$, $\mu\in \FntMsrSp^o$ and
$F\in \Bndd\Borel(\FntMsrSp)$,
\begin{equation}
\bfQ^\mu_{t,r}[F]= \ProbQProc_\mu^{(t+r)}[F(X_t)] \xrightarrow[r\to\infty]{} \ProbQProc_\mu^{(\infty)}[F(X_t)] = \bfQ_{t,\infty}^\mu[F].
      \qedhere
\end{equation}
\end{proof}

Before proving Proposition \ref{thm:Q.2}, we first prove the following lemma.

\begin{lemma}\label{thm:Q.3}
For any $\mu\in \FntMsrSp^o$ and $s\in \RealLine_+$, it holds that
\begin{equation}
	\limsup_{t\to\infty}\sup_{\eta\in \FntMsrSp^o}\frac{1}{\eta(\phi)}\frac{\mathrm P_{\eta}(X_{t-s} \neq \NullMsr)}{\mathrm P_\mu(X_{t} \neq \NullMsr)} < \infty.
\end{equation}
\end{lemma}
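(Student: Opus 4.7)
The plan is to exploit the uniform asymptotic factorization of $v_{t}$ provided by Lemma \ref{thm:M.3}, which already played the essential role in Lemma \ref{thm:Y.3}. The only new issue is uniformity in $\eta$, and this is precisely what Lemma \ref{thm:M.3} delivers because the error term $\Cr{const:M.3}(t,x)$ is controlled uniformly in $x\in E$.

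First I would use the identity $\mathrm P_\eta(X_{t-s}\neq \NullMsr) = 1-e^{-\eta(v_{t-s})}$ from \eqref{eq:M.25} together with the elementary bound $1-e^{-z}\leq z$ for $z\geq 0$, to obtain
\[
\mathrm P_\eta(X_{t-s}\neq \NullMsr)\leq \eta(v_{t-s}).
\]
Then, applying Lemma \ref{thm:M.3}, I write $v_{t-s}(x)=\phi(x)\nu(v_{t-s})(1+\Cr{const:M.3}(t-s,x))$ and integrate against $\eta$ to get
\[
\eta(v_{t-s}) \leq \nu(v_{t-s})\,\eta(\phi)\bigl(1+\varepsilon_t\bigr),
\quad \varepsilon_t:=\sup_{x\in E}|\Cr{const:M.3}(t-s,x)|,
\]
where $\varepsilon_t\to 0$ as $t\to\infty$ uniformly in $\eta\in \FntMsrSp^o$. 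Dividing by $\eta(\phi)>0$ cancels the dependence on $\eta$ completely:
\[
\frac{1}{\eta(\phi)}\mathrm P_\eta(X_{t-s}\neq \NullMsr) \leq (1+\varepsilon_t)\,\nu(v_{t-s}).
\]

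For the denominator I would repeat the reasoning already used in the proof of Lemma \ref{thm:Y.3}. Since $\nu(v_t)\to 0$ as $t\to\infty$ (indeed $\mu(v_t)\to 0$ by Lemma \ref{thm:M.2} applied to $\mu=\nu$), we have $\mu(v_t)\to 0$ and hence
\[
\mathrm P_\mu(X_t\neq \NullMsr) = 1-e^{-\mu(v_t)} \sim \mu(v_t) \sim \mu(\phi)\nu(v_t),\quad t\to\infty,
\]
where the last step again uses Lemma \ref{thm:M.3}. Combining,
\[
\frac{1}{\eta(\phi)}\frac{\mathrm P_\eta(X_{t-s}\neq \NullMsr)}{\mathrm P_\mu(X_t\neq \NullMsr)}
\leq (1+\varepsilon_t)\,\frac{\nu(v_{t-s})}{\mathrm P_\mu(X_t\neq \NullMsr)},
\]
and the right hand side converges to $e^{-\lambda s}/\mu(\phi)$ by the argument in Lemma \ref{thm:Y.3} (which shows $\nu(v_{t-s})/\nu(v_t)\to e^{-\lambda s}$). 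This upper bound is independent of $\eta$, so taking $\sup_{\eta\in \FntMsrSp^o}$ before $\limsup_{t\to\infty}$ gives the claimed finiteness.

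No real obstacle is expected: the whole point is that Lemma \ref{thm:M.3}'s uniform control $\sup_{x\in E}|\Cr{const:M.3}(t,x)|\to 0$ lets us factor $\eta(\phi)$ out of $\eta(v_{t-s})$ with an error that does not depend on $\eta$. The only subtlety worth noting explicitly is that we must use the one-sided bound $1-e^{-z}\leq z$ (rather than an asymptotic equivalence) for the numerator, because $\eta(v_{t-s})$ is not assumed small; it is only after dividing by $\eta(\phi)$ and comparing with the denominator that the ratio becomes bounded.
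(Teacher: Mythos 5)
Your proof is correct and follows essentially the same route as the paper: both use the identity $\ProbSupProc_\eta(X_{t-s}\neq\NullMsr)=1-e^{-\eta(v_{t-s})}$, the bound $1-e^{-z}\leq z$ on the numerator, the uniform factorization from Lemma \ref{thm:M.3} to extract $\eta(\phi)$, and the convergence $\nu(v_{t-s})/\nu(v_t)\to e^{-\lambda s}$ together with $\mu(v_t)/(1-e^{-\mu(v_t)})\to 1$ to identify the finite limit $e^{-\lambda s}/\mu(\phi)$.
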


\begin{proof}
	Let $v_t(x)$ be given as in \eqref{eq:M.24}.
	By \eqref{eq:M.25} and Lemma \ref{thm:M.3}, we have for any $\mu,\eta\in \FntMsrSp^o$, $s\geq 0$ and $t> s$,
	\begin{equation}
		 \frac{\mathrm P_{\eta}(X_{t-s}\neq \NullMsr)}{\mathrm P_\mu(X_t\neq \NullMsr)}
		\leq \frac{\eta(v_{t-s}) }{\mu(v_t)} \frac{\mu(v_t)}{1-e^{- \mu(v_t) }}
		\leq \frac{\nu(v_{t-s}) }{\nu(v_t)} \frac{(1+ \sup_{x\in E}|\Cr{const:M.3}(t-s,x)|) \eta(\phi)}{\int (1+ \Cr{const:M.3}(t,x))\phi(x) \mu(\Diff x)}\frac{\mu(v_t)}{1-e^{- \mu(v_t) }}.
	\end{equation}
Using this, \cite{MR4175472}*{(3.20)}
 Lemma \ref{thm:M.3}, the bounded convergence theorem and
\cite{MR4175472}*{(3.39)}
\begin{equation}
	\sup_{\eta\in \FntMsrSp^o}\frac{1}{\eta(\phi)}\frac{\mathrm P_{\eta}(X_{t-s} \neq \NullMsr)}{\mathrm P_\mu(X_{t} \neq \NullMsr)}
\leq \frac{\nu(v_{t-s}) }{\nu(v_t)} \frac{(1+ \sup_{x\in E}|C_{t-s,x}|) }{\int (1+ C_{t,x})\phi(x) \mu(\Diff x) }\frac{\mu(v_t)}{1-e^{- \mu(v_t) }}
\xrightarrow[t\to\infty]{} \frac{e^{-s\lambda}}{\mu(\phi)}
\end{equation}
which implies the desired result.
\end{proof}

\begin{proof}[Proof of Proposition \ref{thm:Q.2}]
	Fix arbitrary $\mu\in \FntMsrSp^o$, $s\geq 0$ and
	$H\in \Bndd\srF_{s}$.
It follows from Lemma \ref{thm:Q.3} that
there exist $\C\label{const:Y.5}(\mu,s)>0$ and $t_0>s$
 such that for any $t\geq t_0$, $\ProbSupProc_\mu$-almost surely,
	\begin{equation}\label{eq:Y.5}
	\frac{\ProbSupProc_{X_s}(X_{t-s} \neq \NullMsr)}{\ProbSupProc_\mu(X_{t} \neq \NullMsr)}
	\leq \Cr{const:Y.5}(\mu,s) X_s(\phi).
\end{equation}
Using the Markov property, Lemma \ref{thm:Y.3}, \eqref{eq:Y.5} and the
dominated convergence theorem, we have
	\begin{align}
		&\ProbQProc^{(t)}_{\mu}[H]
		 = \frac{\ProbSupProc_\mu[ H \cdot \Ind_{\{X_t \neq \NullMsr\}}]}{\ProbSupProc_\mu(X_{t}\neq \NullMsr)}
	   	= \ProbSupProc_\mu\Big[ H \cdot \frac{\ProbSupProc_{X_s}(X_{t-s}\neq\NullMsr)}{\ProbSupProc_\mu(X_{t}\neq \NullMsr)} \Big]
	      \\&\xrightarrow[t\to \infty]{} \ProbSupProc_\mu\Big[H \cdot \frac{e^{-\lambda s} X_s(\phi)}{\mu(\phi)}\Big]
	      = \ProbQProc^{(\infty)}_\mu[H]
	\end{align}
	as desired.
\end{proof}

\section{ Proofs of Theorems \ref{thm:E} and \ref{thm:E.0}} \label{sec:E}
Our proofs of Theorems \ref{thm:E} and \ref{thm:E.0} are separated into the following five propositions whose proofs are postponed to Subsections \ref{sec:K}, \ref{sec:L}, \ref{sec:M}, \ref{sec:T}, and \ref{sec:R}, respectively.

\begin{proposition}  \label{thm:K}
	There exists a constant $\mathcal K\in [0,\infty)$ which is independent of the initial value $\mu \in \FntMsrSp^o$ such that
	\begin{equation}\label{eq:E.1}
		\lim_{t\rightarrow\infty} e^{-\lambda t}\mathrm P_\mu(X_t \neq \NullMsr)
		=\mathcal K\mu(\phi), \quad \mu \in \FntMsrSp^o.
	\end{equation}
\end{proposition}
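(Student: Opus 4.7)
The plan is to first identify a monotonically non-increasing quantity whose limit produces the constant $\mathcal K$, and then to transfer the information from the stationary measure $\nu$ to an arbitrary initial measure $\mu$ using the uniform estimate of Lemma \ref{thm:M.3}. By the Markov property and \eqref{eq:I.13}, the function $v_t$ defined in \eqref{eq:M.24} satisfies the nonlinear semigroup identity
\begin{equation}
v_{t+s}(x) = -\log\mathrm P_{\delta_x}\bigl[\exp(-X_s(v_t))\bigr] = V_s v_t(x),\qquad t,s>0,\ x\in E,
\end{equation}
which is legitimate because $v_t\in\BnddPos\Borel(E)$ for every $t>0$ by Lemma \ref{thm:M.3}. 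Jensen's inequality applied to the exponential then gives $V_s v_t\leq T_s v_t$ pointwise, and integrating against $\nu$ and using $\nu T_s = e^{\lambda s}\nu$ from \eqref{asp:H1} yields
\begin{equation}
\nu(v_{t+s}) \leq e^{\lambda s}\nu(v_t),\qquad t,s>0.
\end{equation}
Thus $t\mapsto e^{-\lambda t}\nu(v_t)$ is non-increasing on $(0,\infty)$; since it is finite for each $t>0$ by \eqref{asp:H3}, I would then define $\mathcal K := \lim_{t\to\infty}e^{-\lambda t}\nu(v_t)\in[0,\infty)$.

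To obtain \eqref{eq:E.1} for general $\mu\in\FntMsrSp^o$, I would invoke Lemma \ref{thm:M.3} to write $v_t(x)=\phi(x)\nu(v_t)(1+\Cr{const:M.3}(t,x))$ with $\sup_{x\in E}|\Cr{const:M.3}(t,x)|\to 0$. Using the boundedness of $\phi$ and the dominated convergence theorem,
\begin{equation}
e^{-\lambda t}\mu(v_t) = e^{-\lambda t}\nu(v_t)\int_E\phi(x)\bigl(1+\Cr{const:M.3}(t,x)\bigr)\,\mu(\Diff x)\xrightarrow[t\to\infty]{}\mathcal K\,\mu(\phi).
\end{equation}
Combining this with \eqref{eq:M.25}, which reads $\mathrm P_\mu(X_t\neq\NullMsr)=1-e^{-\mu(v_t)}$, and with Lemma \ref{thm:M.2} (which gives $\mu(v_t)\to 0$), the asymptotic $1-e^{-u}=u(1+o(1))$ as $u\downarrow 0$ yields
\begin{equation}
e^{-\lambda t}\mathrm P_\mu(X_t\neq\NullMsr) = e^{-\lambda t}\mu(v_t)\cdot\frac{1-e^{-\mu(v_t)}}{\mu(v_t)}\xrightarrow[t\to\infty]{}\mathcal K\,\mu(\phi),
\end{equation}
which is the desired identity.

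The only genuine subtlety I anticipate lies in the monotonicity step: one must verify that $v_t$ really is a bounded measurable function to which the log-Laplace semigroup $V_s$ applies cleanly, and this is supplied precisely by Lemma \ref{thm:M.3}. The borderline case $\mathcal K=0$ deserves a brief word, since the ratio $(1-e^{-\mu(v_t)})/\mu(v_t)$ in the last display would be formally $0/0$ if $\mu(v_t)$ vanishes at finite $t$; however, the crude bound $0\leq 1-e^{-u}\leq u$ suffices to conclude that the left-hand side tends to $0=\mathcal K\mu(\phi)$ in that case without further analysis.
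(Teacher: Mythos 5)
Your proof is correct, and it takes a genuinely different route from the paper. The paper constructs $\mathcal K$ probabilistically: it sets $\mathcal K := \ProbSpnDec[Z_0^{(-\infty,0]}(\phi)^{-1}]$ using the spine decomposition of the Q-process $\widetilde\ProbSupProc^{(\infty)}_\nu$, writes $e^{-\lambda t}\ProbSupProc_\nu(X_t\neq\NullMsr)=\ProbQProc^{(\infty)}_\nu[X_t(\phi)^{-1}]$, and identifies the limit by combining the martingale convergence of $e^{-\lambda t}W^{(0)}_t(\phi)$, the monotone convergence of $Z_0^{(-t,0]}(\phi)\uparrow Z_0^{(-\infty,0]}(\phi)$, and a dominated-convergence bound via $Z_0^{(-1,0]}(\phi)^{-1}$. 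You instead work entirely on the analytic side: the semigroup identity $v_{t+s}=V_s v_t$ (valid since $v_t\in\BnddPos\Borel(E)$ by the remark preceding Lemma \ref{thm:M.3}) together with Jensen's inequality $V_s v_t\le T_s v_t$ and the left-eigenmeasure relation $\nu T_s=e^{\lambda s}\nu$ give monotonicity of $e^{-\lambda t}\nu(v_t)$, from which the existence of $\mathcal K\in[0,\infty)$ is immediate, and the extension to general $\mu$ via Lemma \ref{thm:M.3} and the elementary asymptotic $1-e^{-u}\sim u$ then matches the paper's treatment. Your argument is shorter and more elementary, requiring only the mean semigroup and the log-Laplace equation, whereas the paper's spine construction buys a probabilistic formula for $\mathcal K$ that it reuses immediately afterward: Proposition \ref{thm:L} ($\mathcal K>0\iff\mathcal E<\infty$) and Proposition \ref{thm:M} both work directly with $Z_0^{(-\infty,0]}(\phi)$, so the paper cannot avoid the spine machinery in any case. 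Your handling of the borderline $\mathcal K=0$ case via the crude bound $1-e^{-u}\le u$ is the right fix; one could also note that for $t$ large Lemma \ref{thm:M.3} forces $\mu(v_t)\ge\tfrac12\nu(v_t)\mu(\phi)>0$, so the ratio in the last display is never of the form $0/0$ once $t$ is sufficiently large, but the crude bound is cleaner and does not require this observation.
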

In the remainder of this paper, $\mathcal K$  always denotes the constant above.
\begin{proposition}\label{thm:L}
$\mathcal K>0$ if and only if $\mathcal E<\infty$.
\end{proposition}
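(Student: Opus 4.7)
The plan is to analyze the cumulant $v_t(x):=-\log\ProbSupProc_{\delta_x}(X_t=\NullMsr)$ via its evolution equation and reduce the criterion $\mathcal K>0$ to an integrability condition equivalent to $\mathcal E<\infty$.

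Setting $\Psi(x,z):=\sigma(x)^2 z^2+\int_{(0,\infty)}(e^{-zu}-1+zu)\,\pi(x,\Diff u)$ and $g(t):=e^{-\lambda t}\nu(v_t)$, the mild cumulant identity $v_{t+s}=T_s v_t-\int_0^s T_{s-r}\Psi(\cdot,v_{t+r})\,\Diff r$ combined with $\nu T_s=e^{\lambda s}\nu$ yields the ODE $g'(t)=-e^{-\lambda t}\nu(\Psi(\cdot,v_t))$, equivalently $(\log g)'(t)=-\nu(\Psi(\cdot,v_t))/\nu(v_t)$. By Proposition \ref{thm:K} applied to $\mu=\nu$ (together with Lemma \ref{thm:M.2}, so that $\nu(v_t)\to 0$), one has $g(t)\to\mathcal K$. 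Integrating the logarithmic derivative from $t_0$ to $\infty$ gives the key equivalence
\[
\mathcal K>0 \iff \int_{t_0}^\infty\frac{\nu(\Psi(\cdot,v_s))}{\nu(v_s)}\,\Diff s<\infty \quad\text{for some (equivalently every) } t_0>0.
\]

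It remains to identify this integrability with $\mathcal E<\infty$. By Lemma \ref{thm:M.3}, $v_s(x)\sim g(s)e^{\lambda s}\phi(x)$ uniformly in $x$, so the integrand is comparable (up to factors tending to $1$) to $F(c_s):=\nu(\Psi(\cdot,c_s\phi))/c_s$ with $c_s:=g(s)e^{\lambda s}$. The $\sigma^2$ contribution to $F$ equals $c_s\,\nu(\sigma^2\phi^2)$, which is trivially integrable because $c_s=O(e^{\lambda s})$ and $\lambda<0$. For the jump contribution, the two elementary tools are the monotonicity of $c\mapsto h(c\phi u)/c$, where $h(z):=e^{-z}-1+z$ (a short derivative computation), together with the asymptotic $\int_0^T h(w)/w^2\,\Diff w\asymp\log(1+T)$. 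The substitution $w=c\phi(x)u$ in the time integral converts the jump part of $\int F(c_s)\,\Diff s$ into an expression proportional to $\iint\phi(x)u\log(1+c_0\phi(x)u)\,\pi(x,\Diff u)\,\nu(\Diff x)$ for some constant $c_0>0$; splitting into the regions $\{\phi u\lesssim 1\}$ and $\{\phi u\gtrsim 1\}$, the former contributes a finite amount controlled by the standing hypothesis $\sup_x\int(u\wedge u^2)\pi(x,\Diff u)<\infty$, while the latter is $\mathcal E$ up to constants.

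The two implications must be handled separately to avoid circular reasoning. For $\mathcal E<\infty\Rightarrow\mathcal K>0$, the uniform upper bound $c_s\leq g(t_0)e^{\lambda s}$ (which holds regardless of whether $\mathcal K>0$, since $g$ is decreasing) together with the monotonicity of $c\mapsto h(c\phi u)/c$ bounds $\int_{t_0}^\infty F(c_s)\,\Diff s$ above by a quantity of the form $C_1+C_2\mathcal E$. For $\mathcal K>0\Rightarrow\mathcal E<\infty$, the lower bound $c_s\geq(\mathcal K/2)e^{\lambda s}$, now available for $s$ large, combined with $h(z)\geq z/2$ for $z\geq 2$ and a parallel change of variables, yields a matching lower bound of order $\mathcal E$. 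The main obstacle is keeping these estimates uniform in $x\in E$ across the two $\phi u$-regimes; the monotonicity property of $h(z)/z$ and the basic moment assumption on $\pi$ are what make this possible. The dichotomy also admits a probabilistic interpretation via the spine decomposition of the Q-process (Lemmas \ref{thm:K.6} and \ref{thm:T.5}), in the Lyons--Pemantle--Peres framework where $\mathcal E<\infty$ is the condition for the Q-process total mass to remain tight as $t\to\infty$.
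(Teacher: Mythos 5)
Your proposal is correct, but it takes a genuinely different route from the paper's own proof. The paper argues probabilistically via the spine decomposition (Lemma \ref{thm:K.6}): when $\mathcal E<\infty$ it shows that the accumulated immigration $Z_0^{(-\infty,0]}(\phi)$ is almost surely finite, so $\mathcal K=\ProbSpnDec[Z_0^{(-\infty,0]}(\phi)^{-1}]>0$; when $\mathcal E=\infty$, Lemmas \ref{thm:L.1} and \ref{thm:L.2}(2) together with a Poisson point process argument produce almost surely infinitely many immigration pieces each carrying mass at least some fixed $\theta>0$, whence $Z_0^{(-\infty,0]}(\phi)=\infty$ a.s.\ and $\mathcal K=0$. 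Your argument is instead analytic: you work with $g(t)=e^{-\lambda t}\nu(v_t)$, derive $(\log g)'(t)=-\nu(\Psi_0 v_t)/\nu(v_t)$ (this is the identity the paper also invokes in Lemma \ref{rate lemma}, quoting \cite{MR4175472}), and reduce the dichotomy to the convergence or divergence of $\int_{t_0}^\infty\nu(\Psi_0 v_s)/\nu(v_s)\,\Diff s$. The subsequent identification of this integral with $\mathcal E$ up to controlled additive errors --- via the monotonicity of $z\mapsto h(z)/z$ for $h(z)=e^{-z}-1+z$, the asymptotic $\int_0^T h(w)w^{-2}\,\Diff w\asymp\log(1+T)$, and the substitution $w=c\phi(x)u$, with $c_s\le g(t_0)e^{\lambda s}$ always and $c_s\ge\mathcal K e^{\lambda s}$ when $\mathcal K>0$ --- is, in one direction, precisely the computation carried out in the paper's Lemma \ref{rate lemma} for the double-limit result; you supply in addition the matching lower bound needed for the converse. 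Both routes are sound. The spine proof is conceptually cleaner about the mass-accumulation mechanism and matches the overall probabilistic framework of the section, whereas your ODE proof is closer to Grey's classical treatment of CSBP and makes the logarithmic threshold visible directly in the cumulant. One small caveat worth registering: your argument still calls on Proposition \ref{thm:K} to identify $\lim_t g(t)$ with $\mathcal K$, and that proposition's proof in this paper is itself spine-based; there is no circularity, but your route is not an entirely spine-free alternative unless you also re-derive the existence of the limit $e^{-\lambda t}\mathrm P_\nu(X_t\neq\NullMsr)$ from the monotonicity of $g$ directly.
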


\begin{proposition} \label{thm:M}
It holds that
\[
	\int_{\FntMsrSp}\eta(\phi)\bfQ_{\infty,0}(\Diff \eta) = \mathcal K^{-1}.
\]
\end{proposition}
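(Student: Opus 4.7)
The plan is to exploit the quasi-stationarity of $\bfQ_{\infty,0}$ recorded in \eqref{eq:Y.1} and to pass to the limit using Proposition \ref{thm:K}. Unwrapping the definition of $\bfQ_{\infty,0}\ProbSupProc$, the identity \eqref{eq:Y.1} reads
\[
\int_{\FntMsrSp} e^{-\lambda r}\,\ProbSupProc_\eta(X_r\neq\NullMsr)\,\bfQ_{\infty,0}(\Diff\eta) = 1, \qquad r\geq 0.
\]
By Proposition \ref{thm:K}, the integrand converges pointwise to $\mathcal{K}\,\eta(\phi)$ as $r\to\infty$. The whole proof therefore reduces to justifying the interchange of limit and integral in this identity.

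The main technical step is to establish a uniform-in-$\eta$ dominating bound of the form $e^{-\lambda r}\ProbSupProc_\eta(X_r\neq\NullMsr)\leq C\,\eta(\phi)$ valid for every $\eta\in\FntMsrSp^o$ and every $r\geq r_0$. This is done in the spirit of Lemma \ref{thm:Q.3}: Lemma \ref{thm:M.3} gives $v_r(x)\leq 2\phi(x)\nu(v_r)$ for every $x\in E$ once $r$ is large enough, so
\[
\ProbSupProc_\eta(X_r\neq\NullMsr) = 1-e^{-\eta(v_r)} \leq \eta(v_r) \leq 2\nu(v_r)\,\eta(\phi).
\]
Since Lemma \ref{thm:M.2} together with \eqref{eq:M.25} gives $\nu(v_r)\to 0$, we have $\nu(v_r)\sim 1-e^{-\nu(v_r)} = \ProbSupProc_\nu(X_r\neq\NullMsr)$ as $r\to\infty$, and Proposition \ref{thm:K} applied with $\mu = \nu$ (recall $\nu(\phi)=1$) then gives $e^{-\lambda r}\nu(v_r)\to\mathcal{K}$. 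In particular $e^{-\lambda r}\nu(v_r)$ is bounded for $r$ large, producing the required uniform bound.

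Given this bound, Fatou's lemma applied to the identity above yields
\[
\mathcal{K}\int_{\FntMsrSp}\eta(\phi)\,\bfQ_{\infty,0}(\Diff\eta) \leq \liminf_{r\to\infty}\int_{\FntMsrSp}e^{-\lambda r}\ProbSupProc_\eta(X_r\neq\NullMsr)\,\bfQ_{\infty,0}(\Diff\eta) = 1.
\]
If $\mathcal{K}>0$, this inequality already forces $\int_{\FntMsrSp}\eta(\phi)\bfQ_{\infty,0}(\Diff\eta)\leq\mathcal{K}^{-1}<\infty$, whence the dominating function $C\eta(\phi)$ is $\bfQ_{\infty,0}$-integrable and the dominated convergence theorem upgrades the inequality to the desired equality $\int_{\FntMsrSp}\eta(\phi)\bfQ_{\infty,0}(\Diff\eta) = \mathcal{K}^{-1}$. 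If instead $\mathcal{K}=0$, then the claim reads $\int_{\FntMsrSp}\eta(\phi)\bfQ_{\infty,0}(\Diff\eta) = \infty$; were this integral finite, the same dominated convergence argument would give $0=\lim_{r\to\infty}1=1$, a contradiction. The main obstacle is the uniform dominating bound, but since it parallels estimates already developed in Lemma \ref{thm:Q.3} and relies only on Lemma \ref{thm:M.3} and Proposition \ref{thm:K}, the argument should be largely a repetition of what is already on record.
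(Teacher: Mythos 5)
Your argument is correct, and for the case $\mathcal K>0$ it follows a genuinely different route from the paper's. The paper establishes this case via the spine decomposition: it writes $\ProbSupProc_\nu[\Ind_{\{X_t\neq\NullMsr\}}F(X_t)]$ through $\ProbQProc^{(\infty)}_\nu$, passes to the limit along the random measure $Z^{(-\infty,0]}_0$, and arrives at the change-of-measure identity \eqref{eq:M.41}, from which the claim follows by a monotone class argument. That route also yields the structural identity $\widehat{\bfQ}=\bfQ_{\infty,\infty}$ (Remark \ref{thm:M.4}), which the paper reuses later. You instead work directly from the quasi-stationarity identity \eqref{eq:Y.1}, rewritten as $\int e^{-\lambda r}\ProbSupProc_\eta(X_r\neq\NullMsr)\,\bfQ_{\infty,0}(\Diff\eta)=1$, and pass to the limit $r\to\infty$ using the pointwise convergence of Proposition \ref{thm:K}. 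The domination $e^{-\lambda r}\ProbSupProc_\eta(X_r\neq\NullMsr)\le 2\bigl(e^{-\lambda r}\nu(v_r)\bigr)\eta(\phi)$ coming from Lemma \ref{thm:M.3} and the boundedness of $e^{-\lambda r}\nu(v_r)$ is exactly what makes Fatou and then dominated convergence go through; this avoids the spine decomposition entirely and is more elementary. For the case $\mathcal K=0$ you obtain the same conclusion by a contradiction via dominated convergence; the paper instead gives a direct lower bound on $\int\eta(\phi)\,\bfQ_{\infty,0}(\Diff\eta)$ from the inequality $\eta(2\nu(v_t)\phi)\ge\eta(v_t)\ge 1-e^{-\eta(v_t)}$ together with \eqref{eq:Y.1}, but the ingredients (Lemma \ref{thm:M.3}, \eqref{eq:Y.1}, and $e^{-\lambda t}\nu(v_t)\to\mathcal K$) are the same. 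Your proof is self-contained for Proposition \ref{thm:M} and arguably cleaner, though it does not supply the identification of $\bfQ_{\infty,\infty}$ with the law of $Z^{(-\infty,0]}_0$ that the paper extracts as a byproduct.
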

When $\mathcal K>0$, Proposition \ref{thm:M} allows us to consider the (unique) probability measure $\bfQ_{\infty,\infty}$ on $\FntMsrSp$ satisfying
\begin{equation}
		\label{eq:E.11}
	\bfQ_{\infty,\infty} (F)= \int_{\FntMsrSp} F(\eta)\cdot \mathcal K\eta(\phi)\bfQ_{\infty,0}(\Diff \eta),\quad F \in \Bndd \Borel(\FntMsrSp).
\end{equation}

\begin{proposition} \label{thm:T}
	Let $\mu\in\FntMsrSp^o$ be arbitrary.
	If $\mathcal K>0$, then $\bfQ^\mu_{t,\infty}$ converges weakly to $\bfQ_{\infty,\infty}$ as $t\to\infty$.
	If $\mathcal K=0$, then $\bfQ^\mu_{t,\infty}$ does not converge weakly as $t\to\infty$.
\end{proposition}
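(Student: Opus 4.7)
My plan is to start from the identity
\begin{equation}
\bfQ^\mu_{t,\infty}[F] = \frac{\ProbSupProc_\mu(X_t \neq \NullMsr)}{e^{\lambda t}\mu(\phi)} \cdot \bfQ^\mu_{t,0}\bigl[\eta(\phi) F(\eta)\bigr], \qquad F \in \Bndd\Borel(\FntMsrSp),
\end{equation}
which follows immediately from the definition of $\bfQ^\mu_{t,\infty}$ and the fact that $X_t(\phi) = 0$ on $\{X_t = \NullMsr\}$. By Proposition \ref{thm:K} the prefactor converges to $\mathcal K$, and the two assertions of the proposition split naturally along the dichotomy $\mathcal K > 0$ versus $\mathcal K = 0$.

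For the case $\mathcal K = 0$, plugging $F = \Ind_{\{\eta(\phi) \leq M\}}$ into the identity yields $\bfQ^\mu_{t,\infty}(\eta(\phi) \leq M) \leq M \cdot \ProbSupProc_\mu(X_t \neq \NullMsr)/(e^{\lambda t}\mu(\phi)) \to 0$ for every $M > 0$. Since $\phi$ is bounded, any compact subset $K \subset \FntMsrSp$ has uniformly bounded total mass (by Prohorov's theorem applied to the weak topology on $\FntMsrSp$), hence $K \subset \{\eta : \eta(\phi) \leq M\}$ for $M$ large enough, so $\bfQ^\mu_{t,\infty}(K) \to 0$ for every compact $K$. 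The family $\{\bfQ^\mu_{t,\infty}\}_t$ thus fails to be tight, and therefore cannot converge weakly on the Polish space $\FntMsrSp$.

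For the case $\mathcal K > 0$, Laplace functionals $\{\eta \mapsto e^{-\eta(f)} : f \text{ bounded continuous non-negative}\}$ form a determining class for weak convergence of probability measures on $\FntMsrSp$ (by \cite{MR2760602}*{Theorem 1.18}), so by the key identity and Proposition \ref{thm:K} it suffices to prove
\begin{equation}
\bfQ^\mu_{t,0}\bigl[\eta(\phi) e^{-\eta(f)}\bigr] \xrightarrow[t\to\infty]{} \bfQ_{\infty,0}\bigl[\eta(\phi) e^{-\eta(f)}\bigr]
\end{equation}
for every bounded continuous $f \geq 0$; this, combined with the definition \eqref{eq:E.11} of $\bfQ_{\infty,\infty}$, then delivers $\bfQ^\mu_{t,\infty}[e^{-\eta(f)}] \to \bfQ_{\infty,\infty}[e^{-\eta(f)}]$. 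Two ingredients feed into this. First, setting $F \equiv 1$ in the key identity yields $\bfQ^\mu_{t,0}[\eta(\phi)] = e^{\lambda t}\mu(\phi)/\ProbSupProc_\mu(X_t \neq \NullMsr) \to 1/\mathcal K = \bfQ_{\infty,0}[\eta(\phi)] < \infty$ by Propositions \ref{thm:K} and \ref{thm:M}; combined with the convergence in distribution of $\eta(\phi)$ under $\bfQ^\mu_{t,0}$ to $\eta(\phi)$ under $\bfQ_{\infty,0}$ on $[0,\infty)$, obtained by applying Lemma \ref{thm:Y.4} with $f = \theta\phi$ for $\theta > 0$, this gives uniform integrability of $\eta(\phi)$ under $\{\bfQ^\mu_{t,0}\}_t$ (convergence in distribution of non-negative random variables together with convergence of finite first moments implies UI). Second, using the monotone representation $\eta(\phi) e^{-\eta(f)} = \lim_{\theta \downarrow 0} \theta^{-1}(1 - e^{-\theta\eta(\phi)}) e^{-\eta(f)}$ and applying Lemma \ref{thm:Y.4} to $f + \theta\phi$ for each fixed $\theta > 0$, I exchange the limits $t \to \infty$ and $\theta \downarrow 0$; the exchange is justified by the uniform integrability, which controls the defect $\eta(\phi) - \theta^{-1}(1 - e^{-\theta\eta(\phi)})$ uniformly in $t$.

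The main technical obstacle is that $\phi$ is only assumed Borel, so $\eta \mapsto \eta(\phi)$ is not continuous on $\FntMsrSp$ in the weak topology. This prevents a direct appeal to Skorohod coupling or the portmanteau theorem to pass from $\bfQ^\mu_{t,0} \Rightarrow \bfQ_{\infty,0}$ to convergence of $\bfQ^\mu_{t,0}[\eta(\phi) F(\eta)]$ for a merely bounded continuous $F$. The Laplace-transform detour combined with the uniform integrability extracted from the matching first moments is precisely what circumvents this discontinuity.
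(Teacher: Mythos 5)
Your proposal is correct, and it takes a genuinely different route from the paper's. The paper proves this proposition by leaning on the spine decomposition (Lemmas \ref{thm:K.6} and \ref{thm:T.5}) together with the nontrivial ergodic-type Lemma \ref{thm:F} on the functional $\mathcal L_t f(x) = \ProbSpnDec_{\delta_x}[e^{-Z^{(0,t]}_t(f)}]$, and it first settles the special case $\mu = \nu$ (Lemma \ref{thm:T.1}) before extending to general $\mu$. You instead exploit the exact change-of-measure identity $\bfQ^\mu_{t,\infty}[F] = \big(\ProbSupProc_\mu(X_t\neq\NullMsr)/(e^{\lambda t}\mu(\phi))\big)\,\bfQ^\mu_{t,0}[\eta(\phi)F(\eta)]$ to pass from the $h$-transformed law to the Yaglom-conditioned law, then obtain uniform integrability of $\eta(\phi)$ under $\{\bfQ^\mu_{t,0}\}$ from distributional convergence (Lemma \ref{thm:Y.4} with $f=\theta\phi$) plus convergence of the first moment (from Propositions \ref{thm:K} and \ref{thm:M}). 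This bypasses the spine machinery entirely for this proposition, making the argument both shorter and more elementary; what it costs is that you need Proposition \ref{thm:M} as an input. Two small points worth making explicit: (i) the uniform integrability you extract holds over $t\geq T_0$ for some finite $T_0$, which is all that the $\lim_{t\to\infty}$ statement requires; and (ii) in the $\mathcal K=0$ case, the step from ``every compact $K$ has $\bfQ^\mu_{t,\infty}(K)\to 0$'' to ``no weak limit'' deserves a sentence: along any $t_n\to\infty$, a weakly convergent sequence of probability measures on the Polish space $\FntMsrSp$ is tight, contradicting $\bfQ^\mu_{t_n,\infty}(K)\to 0$. Your $\mathcal K=0$ argument also differs from the paper's, which instead bounds $\bfQ^\mu_{t,\infty}[e^{-\eta(\phi)}]\leq\bfQ^\mu_{t,\infty}[\eta(\phi)^{-1}]\to 0$ directly; both are valid.
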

\begin{proposition} \label{thm:R}
	If $\mathcal K>0$, then $\bfQ_{\infty,r}$ converges
	strongly to $\bfQ_{\infty,\infty}$ as $r\to\infty$.
	If $\mathcal K=0$, then $\bfQ_{\infty,r}$ does not converge
	strongly as $r\to\infty$.
\end{proposition}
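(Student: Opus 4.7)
The plan is to exploit a size-biased representation of $\bfQ_{\infty,r}$ in terms of $\bfQ_{\infty,0}$. Combining the definition \eqref{eq:Y.15}, the Markov property and the normalization \eqref{eq:Y.1}, one obtains for any $F\in\Bndd\Borel(\FntMsrSp)$
\[
\bfQ_{\infty,r}[F] = \int_{\FntMsrSp} F(\eta)\cdot e^{-\lambda r}\ProbSupProc_\eta(X_r\neq\NullMsr)\,\bfQ_{\infty,0}(\Diff\eta).
\]
So the whole question reduces to the asymptotic analysis of $\eta\mapsto e^{-\lambda r}\ProbSupProc_\eta(X_r\neq\NullMsr)$. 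By Proposition \ref{thm:K} this quantity converges pointwise to $\mathcal K\eta(\phi)$. The uniform control needed to interchange limit and integral is already available from Lemma \ref{thm:Q.3}: fixing any reference $\mu\in\FntMsrSp^o$ and taking $s=0$ in that lemma, there exist $\C\label{const:R.0}>0$ and $r_0\geq 0$ such that
\[
e^{-\lambda r}\ProbSupProc_\eta(X_r\neq\NullMsr)\leq \Cr{const:R.0}\,\eta(\phi)\cdot e^{-\lambda r}\ProbSupProc_\mu(X_r\neq\NullMsr), \quad r\geq r_0,\ \eta\in\FntMsrSp^o.
\]

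For the case $\mathcal K>0$: Proposition \ref{thm:K} bounds the scalar factor $e^{-\lambda r}\ProbSupProc_\mu(X_r\neq\NullMsr)$ uniformly in $r\geq r_0$, so the integrand is dominated by a constant multiple of $\eta(\phi)$. By Proposition \ref{thm:M} the function $\eta\mapsto\eta(\phi)$ is $\bfQ_{\infty,0}$-integrable, and the dominated convergence theorem gives
\[
\bfQ_{\infty,r}[F]\xrightarrow[r\to\infty]{}\mathcal K\int_{\FntMsrSp} F(\eta)\eta(\phi)\,\bfQ_{\infty,0}(\Diff\eta)=\bfQ_{\infty,\infty}[F]
\]
for every bounded measurable $F$, which is precisely strong convergence.

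For the case $\mathcal K=0$: I plan to apply the above uniform bound to the bounded measurable indicator $F=\mathbf 1_{\{\eta:\eta(\phi)\leq N\}}$. Since $e^{-\lambda r}\ProbSupProc_\mu(X_r\neq\NullMsr)\to 0$ by Proposition \ref{thm:K}, one obtains
\[
\bfQ_{\infty,r}\bigl(\{\eta:\eta(\phi)\leq N\}\bigr)\leq \Cr{const:R.0}\,N\cdot e^{-\lambda r}\ProbSupProc_\mu(X_r\neq\NullMsr)\xrightarrow[r\to\infty]{}0,\quad N>0.
\]
If $\bfQ_{\infty,r}$ converged strongly to some probability measure $\bfQ$, testing against this indicator would force $\bfQ(\{\eta:\eta(\phi)\leq N\})=0$ for every $N$; but $\phi\in\Bndd\Borel(E)$ and every $\eta\in\FntMsrSp$ is finite, so $\{\eta:\eta(\phi)\leq N\}\uparrow\FntMsrSp$, contradicting $\bfQ(\FntMsrSp)=1$. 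The main conceptual point, beyond the uniform domination from Lemma \ref{thm:Q.3}, is the choice of indicator test function: strong convergence rules out the ``escape of mass to infinity'' in $\eta(\phi)$ that necessarily occurs when $\mathcal K=0$, which is the essence of why the $L\log L$ condition is forced.
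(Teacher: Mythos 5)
Your proof is correct, and the overall structure matches the paper's: both proofs rewrite
\[
\bfQ_{\infty,r}[F]=\int_{\FntMsrSp^o}F(\eta)\,e^{-\lambda r}\ProbSupProc_\eta(X_r\neq\NullMsr)\,\bfQ_{\infty,0}(\Diff\eta)
\]
from \eqref{eq:Y.15} and \eqref{eq:Y.1}, then combine the pointwise limit from Proposition \ref{thm:K} with the uniform-in-$\eta$ domination from Lemma \ref{thm:Q.3} (applied with $s=0$). For the case $\mathcal K>0$ your argument is essentially identical to the paper's: you dominate by a constant multiple of $\eta(\phi)$, which is $\bfQ_{\infty,0}$-integrable by Proposition \ref{thm:M}, and pass to the limit.

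For the case $\mathcal K=0$ you take a slightly different route in the contradiction step. The paper tests against the single strictly positive, bounded function $F(\eta)=e^{-\eta(\phi)}$, uses the elementary bound $F(\eta)\leq\eta(\phi)^{-1}$ to see that the integrand tends to $0$ uniformly in $\eta$, and thus concludes $\bfQ_{\infty,r}[F]\to 0$, which contradicts $\bfQ^*[F]>0$. You instead test against the family of indicators $\mathbf 1_{\{\eta(\phi)\le N\}}$, show each has $\bfQ_{\infty,r}$-mass tending to $0$, and then invoke continuity from below of the putative limit $\bfQ$ to force $\bfQ(\FntMsrSp)=0$. Both are legitimate; the paper's choice of test function is marginally tidier (one exhibit, one contradiction) whereas yours makes the ``escape of $\eta(\phi)$ to infinity'' mechanism slightly more transparent. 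Neither requires anything beyond what the other does, and both rely on exactly the same ingredients (\eqref{eq:R.1}, Lemma \ref{thm:Q.3}, Proposition \ref{thm:K}). Your write-up is sound.
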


\begin{proposition} \label{prop:new}
        If $\mathcal E<\infty$,
   then for any $\mu\in\FntMsrSp^o$
        and non-negative continuous function $f$ on $E$,
	\[
	\lim_{t,r\to\infty}\ProbSupProc_\mu\left(e^{-X_t(f)}\middle|X_{t+r}\neq 0\right)=\int e^{-\eta(f)}
	\bfQ_{\infty,\infty}(\mathrm d \eta).
	\]
\end{proposition}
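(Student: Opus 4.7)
The plan is to express $\ProbSupProc_\mu\left[e^{-X_t(f)}\middle|X_{t+r}\neq \NullMsr\right]$ through the Q-process measure $\ProbQProc^{(\infty)}_\mu$ and then upgrade the iterated limit (which is already known to equal $\int e^{-\eta(f)}\bfQ_{\infty,\infty}(\Diff\eta)$ by Propositions \ref{thm:T} and \ref{thm:R}) to a genuine joint limit. Using the Markov property at time $t$ together with Lemma \ref{thm:Q.1}, I will first derive the identity
\begin{equation}\label{eq:plan.1}
	\ProbSupProc_\mu\left[e^{-X_t(f)}\middle|X_{t+r}\neq \NullMsr\right] = \frac{1}{h_{t+r}(\mu)}\ProbQProc^{(\infty)}_\mu\left[e^{-X_t(f)}\,h_r(X_t)\right],
\end{equation}
where $h_s(\eta):=e^{-\lambda s}\ProbSupProc_\eta(X_s\neq \NullMsr)/\eta(\phi)$ for $s>0$ and $\eta\in\FntMsrSp^o$. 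Because $\mathcal{E}<\infty$, Proposition \ref{thm:L} gives $\mathcal{K}>0$, and Proposition \ref{thm:K} yields $h_s(\mu)\to\mathcal{K}$ as $s\to\infty$; consequently $h_{t+r}(\mu)$ stays bounded away from $0$ and tends to $\mathcal{K}$ whenever $t+r\to\infty$. Combined with Proposition \ref{thm:T}, which gives $\ProbQProc^{(\infty)}_\mu[e^{-X_t(f)}]\to\int e^{-\eta(f)}\bfQ_{\infty,\infty}(\Diff\eta)$, the proposition will reduce to proving
\begin{equation}\label{eq:plan.2}
	\lim_{r\to\infty}\sup_{t\geq 0}\ProbQProc^{(\infty)}_\mu\left[\bigl|h_r(X_t)-\mathcal{K}\bigr|\right] = 0.
\end{equation}

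To prove \eqref{eq:plan.2}, I will analyze $h_r(\eta)$ via Lemma \ref{thm:M.3}. Writing $v_r(x)=\phi(x)\nu(v_r)(1+\Cr{const:M.3}(r,x))$ with $\sup_x|\Cr{const:M.3}(r,x)|\to 0$, and using that $e^{-\lambda r}\nu(v_r)\to\mathcal{K}$ (which follows by applying Proposition \ref{thm:K} to $\mu=\nu$ together with $1-e^{-\nu(v_r)}=\nu(v_r)(1+o(1))$ as $\nu(v_r)\to 0$), one obtains $e^{-\lambda r}v_r(x)/\phi(x)\to\mathcal{K}$ uniformly in $x\in E$. Substituting this into $h_r(\eta)=e^{-\lambda r}(1-e^{-\eta(v_r)})/\eta(\phi)$ and using the expansion $1-e^{-y}=y(1+O(y))$ as $y\to 0$ will yield, for every fixed $M>0$,
\begin{equation}
	\sup_{\eta\in\FntMsrSp^o:\,\eta(\phi)\leq M}|h_r(\eta)-\mathcal{K}|\xrightarrow[r\to\infty]{}0,
\end{equation}
together with a uniform bound $\sup_{r\geq R,\,\eta\in\FntMsrSp^o}|h_r(\eta)|\leq\mathcal{K}+1$ for some sufficiently large $R$. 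To eliminate the cutoff $M$, I will invoke tightness: Proposition \ref{thm:T} provides weak convergence of $\bfQ^\mu_{t,\infty}$ on $\FntMsrSp$, hence the family $(\bfQ^\mu_{t,\infty})_{t\geq 0}$ is tight on $\FntMsrSp$; since $\eta\mapsto\eta(E)$ is continuous on $\FntMsrSp$ and $\phi$ is bounded, this propagates to tightness of the law of $X_t(\phi)$ under $\bfQ^\mu_{t,\infty}$, uniformly in $t\geq 0$. A splitting of $\ProbQProc^{(\infty)}_\mu[|h_r(X_t)-\mathcal{K}|]$ across the event $\{X_t(\phi)\leq M\}$ then yields \eqref{eq:plan.2} via a standard $\varepsilon/2$-argument: first choose $M$ by tightness to control the tail, then choose $R$ via the preceding display to handle the bulk.

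The main obstacle is precisely this joint-limit upgrade: both iterated limits of $\ProbSupProc_\mu[e^{-X_t(f)}\mid X_{t+r}\neq \NullMsr]$ are already identified in the earlier propositions, but a priori none of the one-parameter convergences is uniform in the other variable, so extra structural input is needed. The decisive observation that makes the argument work is that the factor $h_r(X_t)$ depends on the random measure $X_t$ only through the scalar quantity $X_t(\phi)$, up to errors that are uniform in $\eta$ thanks to Lemma \ref{thm:M.3}; this reduces matters to a tightness statement for the real-valued random variable $X_t(\phi)$ under $\bfQ^\mu_{t,\infty}$, and this in turn is automatic from the weak convergence $\bfQ^\mu_{t,\infty}\to\bfQ_{\infty,\infty}$ established in Proposition \ref{thm:T}, once one recalls that weakly compact subsets of $\FntMsrSp$ carry a uniform bound on total mass.
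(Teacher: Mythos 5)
Your proposal is correct, and its skeleton coincides with the paper's: both rewrite $\ProbSupProc_\mu[e^{-X_t(f)}\mid X_{t+r}\neq\NullMsr]$ as $\ProbQProc^{(\infty)}_\mu[e^{-X_t(f)}\,h_r(X_t)]/h_{t+r}(\mu)$ (the paper's density factor $\mu(\phi)e^{\lambda t}\ProbSupProc_{X_t}(X_r\neq\NullMsr)/\bigl(X_t(\phi)\ProbSupProc_\mu(X_{t+r}\neq\NullMsr)\bigr)$ is exactly your $h_r(X_t)/h_{t+r}(\mu)$), and both control this factor through the uniform asymptotics of $v_r$ in Lemma \ref{thm:M.3}. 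The execution differs in two ways. First, the paper realizes the weak convergence $\bfQ^\mu_{t,\infty}\to\bfQ_{\infty,\infty}$ as almost sure convergence via a Skorohod coupling and then applies bounded convergence, whereas you prove the $L^1$ estimate $\sup_t\ProbQProc^{(\infty)}_\mu[\,|h_r(X_t)-\mathcal K|\,]\to0$ by splitting on $\{X_t(\phi)\leq M\}$; these are interchangeable, though your tightness claim for the whole continuum $(\bfQ^\mu_{t,\infty})_{t\geq0}$ should be phrased either along sequences $t_n\to\infty$ or, via Portmanteau applied to the open set $\{\eta:\eta(\mathbf 1_E)<M\}$, only for $t\geq T$ — which is all the joint limit requires. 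Second, and more substantively, the paper's control of the ratio $e^{\lambda t}\nu(v_r)/\nu(v_{t+r})$ rests on Lemma \ref{rate lemma}, a uniform-in-$r\geq0$ estimate proved from the integral equation for $\nu(v_s)$ and the finiteness of an $L\log L$-type integral; you instead note that for the joint limit both $r$ and $t+r$ are large, where the ratio equals $e^{-\lambda r}\nu(v_r)\big/ e^{-\lambda(t+r)}\nu(v_{t+r})\to\mathcal K/\mathcal K=1$ by Propositions \ref{thm:K} and \ref{thm:L}. This bypasses Lemma \ref{rate lemma} entirely, with the hypothesis $\mathcal E<\infty$ entering only through $\mathcal K>0$; what you give up is the uniformity over small $r$ that the paper's lemma supplies, which is not needed for this proposition.
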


\begin{proof}[Proofs of Theorems \ref{thm:E} and \ref{thm:E.0}]
	The desired results can be verified directly from Propositions \ref{thm:K}--\ref{prop:new}.
\end{proof}

\subsection{Proof of Proposition \ref{thm:K}} \label{sec:K}

In order to prove Proposition \ref{thm:K}, we need the
spine decomposition theorem for superprocesses.	
To formulate this theorem, we first introduce the \emph{Kuznetsov measures} via the following lemma which is proved in \cite{MR2760602}*{Section 8.4}.
\begin{lemma} \label{thm:K.1}
There exists a unique $\sigma$-finite kernel $\NMsr=(\NMsr_x(A):x\in E, A\in \srF)$ \\
 from $(E,\Borel (E))$ to $(\Omega, \srF)$ such that
\begin{enumerate}
\item
	$\NMsr_x(X_0 \neq \NullMsr) = 0$ for any $x\in E$;
\item
	$\NMsr_x ( X_t =\NullMsr \text{ for all }t\geq 0) =0$
	for every $x\in E$; and
\item
	for any $\mu \in \FntMsrSp$, if
	$\mathbf N$
	is a Poisson random measure on $\Omega$ with intensity $\mu\NMsr$, then
	$(\mu\Ind_{\{0\}}(t)+ \mathbf N(X_t)\Ind_{(0,\infty)}(t))_{t\geq 0}$
	is an $\FntMsrSp$-valued stochastic process of the same finite dimensional distributions as a $(\xi,\psi)$-superprocess with initial value $\mu$.
Here
$\mathbf N(X_t)=\int_\Omega X_t(\omega) \mathbf N(\Diff \omega)=\int_\Omega \omega_t \mathbf N(\Diff \omega)$, $t>0$.
\end{enumerate}
\end{lemma}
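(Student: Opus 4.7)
The plan is to construct the kernel $\NMsr$ by prescribing the $\sigma$-finite finite-dimensional distributions of $\NMsr_x$ in terms of the log-Laplace semigroup $(V_t)_{t\ge 0}$, lift to the path space by a Kolmogorov-type extension, and then verify the three properties; uniqueness will come from the normalizations (1) and (2) together with a monotone class argument.

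First I would introduce, for each $x \in E$, $n \geq 1$, times $0 < t_1 < \cdots < t_n$, and $f_1, \ldots, f_n \in \BnddPos\Borel(E)$, the cumulant function $w^{\mathbf t, \mathbf f}$ defined inductively by
\begin{equation*}
w^{(t_1, \ldots, t_n), (f_1, \ldots, f_n)}(x) = V_{t_1}\bigl(f_1 + w^{(t_2-t_1, \ldots, t_n - t_1), (f_2, \ldots, f_n)}\bigr)(x), \qquad w^{(t_1), (f_1)} = V_{t_1} f_1.
\end{equation*}
A routine application of the Markov property and \eqref{eq:I.13} gives $\ProbSupProc_{\delta_x}[\exp(-\sum_i X_{t_i}(f_i))] = \exp(-w^{\mathbf t, \mathbf f}(x))$. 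The defining requirement on $\NMsr_x$ is then
\begin{equation*}
\int_\Omega \Bigl(1 - \exp\Bigl(-\sum_{i=1}^n \omega_{t_i}(f_i)\Bigr)\Bigr)\, \NMsr_x(\Diff \omega) = w^{\mathbf t, \mathbf f}(x),
\end{equation*}
which, together with the normalizations (1) and (2), determines the joint law of $(X_{t_1}, \ldots, X_{t_n})$ under $\NMsr_x$. Consistency of this family in the sense of Kolmogorov is a consequence of the semigroup property of $(V_t)$ applied to the recursion above.

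Second, to upgrade the finite-dimensional specifications to a $\sigma$-finite measure on $(\Omega, \srF)$, I would use that taking $n=1$ and letting $f_1 \uparrow \infty \mathbf 1_E$ yields $\NMsr_x(X_t \neq \NullMsr) = v_t(x)$, which is finite for every $t > 0$ by Lemma \ref{thm:M.3}. Thus $\NMsr_x$ restricted to each $\{X_t \neq \NullMsr\}$ is a finite measure, and these sets cover the non-trivial part of the path space modulo right-continuity. A Kolmogorov extension on the product space $\FntMsrSp^{(0,\infty)}$, combined with the regularity argument of \cite{MR2760602}*{Theorem A.33} to restrict to right-continuous paths, lifts the construction to $(\Omega, \srF)$. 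Property (3) then follows by a direct Laplace calculation: if $\mathbf N$ is a Poisson random measure on $\Omega$ with intensity $\mu\NMsr$,
\begin{equation*}
E\Bigl[\exp\Bigl(-\sum_i \mathbf N(X_{t_i})(f_i)\Bigr)\Bigr] = \exp\Bigl(-\int_\Omega \Bigl(1 - e^{-\sum_i \omega_{t_i}(f_i)}\Bigr)\mu\NMsr(\Diff \omega)\Bigr) = \exp(-\mu(w^{\mathbf t, \mathbf f})),
\end{equation*}
matching the superprocess finite-dimensional Laplace functional starting from $\mu$.

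The main obstacle is making sure that Kolmogorov's extension delivers a measure actually carried by the right-continuous path space $\Omega$ rather than just on the product space, and in particular that properties (1) and (2) can be simultaneously enforced as the unique normalization. Property (1) can be imposed by defining $X_0 := \NullMsr$ off a Kuznetsov-null set, since the finite-dimensional specification only constrains $t>0$. Property (2), that $\NMsr_x$ charges no mass on the trivial path $\omega \equiv \NullMsr$, is more delicate: the quantity $v_t(x) = -\log \ProbSupProc_{\delta_x}(X_t=\NullMsr) \to \infty$ as $t \downarrow 0$ shows that $\NMsr_x$ has infinite mass near time $0^+$, so the trivial path has negligible Kuznetsov-mass compared with the rest; formalizing this and carrying over to the right-continuous version requires the excursion-theoretic regularity available because $X$ is a Borel right process. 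Once this is in place, uniqueness of $\NMsr$ follows from the monotone class theorem, because the finite-dimensional Laplace functionals together with (1) and (2) generate $\srF$ and determine the $\sigma$-finite measure.
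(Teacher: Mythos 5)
The paper does not actually prove this lemma; it invokes \cite{MR2760602}*{Section 8.4}, whose construction runs along exactly the route you sketch: prescribe $\NMsr_x$ through the cumulant functional (equivalently, through its entrance law), extend to the path space, and verify the Poisson reconstruction by Campbell's formula. So the plan is consistent with the reference, but as written there is a genuine gap at the central step. You set down
\[
	\int_\Omega\Bigl(1-\exp\Bigl(-\sum_{i=1}^n\omega_{t_i}(f_i)\Bigr)\Bigr)\,\NMsr_x(\Diff\omega)=w^{\mathbf t,\mathbf f}(x)
\]
and assert that this ``determines the joint law of $(X_{t_1},\ldots,X_{t_n})$ under $\NMsr_x$''; but this quietly presupposes that a $\sigma$-finite measure on $(\FntMsrSp\setminus\{\NullMsr\})^n$ having this $(1-e^{-\cdot})$-transform \emph{exists}. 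That existence is the cluster (canonical) representation of the infinitely divisible law of $(X_{t_1},\ldots,X_{t_n})$ under $\ProbSupProc_{\delta_x}$, and it also requires showing that the representation carries no deterministic shift. Neither of these is a formal manipulation; they are precisely the content that \cite{MR2760602} supplies before reaching Section~8.4. (In the present setting one can use $\ProbSupProc_{\delta_x}(X_t=\NullMsr)>0$, i.e.\ \eqref{asp:H3}, to rule out a deterministic part, but that argument still has to be made rather than implied.)

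Two further points need care. First, the finiteness $\NMsr_x(X_t\neq\NullMsr)=v_t(x)<\infty$ that you invoke is really a consequence of \eqref{asp:H3}, not ``by Lemma~\ref{thm:M.3}'' per se; the Kuznetsov measure exists for general superprocesses even when \eqref{asp:H3} fails, in which case $\{X_t\neq\NullMsr\}$ need not have finite $\NMsr_x$-mass, so your exhaustion by these events is tied to the standing hypothesis of the paper. Second, passing from the Kolmogorov extension on the product space $\FntMsrSp^{(0,\infty)}$ to a $\sigma$-finite measure carried by the right-continuous path space is not a direct appeal to \cite{MR2760602}*{Theorem~A.33}, which is stated for probability transition semigroups; one has to slice $\NMsr_x$ along a set such as $\{X_s\neq\NullMsr\}$, normalize, regularize the resulting probability law, and patch over $s$, which is the excursion-theoretic construction that \cite{MR2760602}*{Section~8.4} carries out. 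Since the paper simply cites that reference, your outline is a reasonable compression of it, but it is not self-contained at the cluster-representation and path-regularity steps.
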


The family of $\sigma$-finite measures $(\NMsr_x)_{x\in E}$ is known as the Kuznetsov measures of $X$.
Note that those measures are typically not finite.
One way to use them
is to transform them into probability measures.
	Notice that from Lemma \ref{thm:K.1}(3) and Campbell's theorem,
\begin{equation} \label{eq:L.3}
	(\mu\NMsr)[X_t(f)] = \ProbSupProc_\mu[X_t(f)] = \mu(T_tf), \quad \mu\in \FntMsrSp, t>0, f\in
	\BnddPos
	\Borel(E).
\end{equation}
Therefore, for any $\mu\in \FntMsrSp^o$ and $t>0$, there exists a unique probability measure $\widetilde{\mu \NMsr}^{(t)}$ on $(\Omega,\mathscr F)$ such that for any  $H\in \Bndd\srF$, $\widetilde{\mu \NMsr}^{(t)}[H] = (\mu\NMsr)[H\cdot e^{-\lambda t}\mu(\phi)^{-1}X_t(\phi)].$

	Another ingredient for the spine decomposition theorem is the so-called spine process which is
an $E$-valued Markov process with transition semigroup $(S_t)_{t\geq 0}$ on $E$ defined so that
\begin{equation} \label{eq:E.17}
	S_t f(x)
	= e^{-\lambda t} \phi(x)^{-1} T_t (\phi f) (x),
	\quad t\geq 0, f\in \Bndd\Borel(E),x\in E.
\end{equation}
	The following lemma can be verified using \cite{MR958914}*{Theorem 62.19}.
	
\begin{lemma} \label{thm:K.3}
$(S_t)_{t\geq 0}$ is a conservative Borel right semigroup on $E$.
\end{lemma}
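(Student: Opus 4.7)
The plan is to verify the three defining features in turn: the semigroup identity, conservativity, and the existence of a Borel right process with transition semigroup $(S_t)_{t\geq 0}$.

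First I would handle conservativity and the semigroup property, which are purely algebraic consequences of Assumption \eqref{asp:H1} and the fact that $(T_t)_{t\geq 0}$ is itself a semigroup. Taking $f \equiv \mathbf{1}_E$ in the definition of $S_t$ gives
\[
S_t \mathbf{1}_E(x) = e^{-\lambda t}\phi(x)^{-1} T_t\phi(x) = e^{-\lambda t}\phi(x)^{-1}\cdot e^{\lambda t}\phi(x) = 1,
\]
which is conservativity. The semigroup property reduces to the chain of equalities
\[
S_t(S_s f)(x) = e^{-\lambda t}\phi(x)^{-1} T_t\bigl[\phi\cdot e^{-\lambda s}\phi^{-1} T_s(\phi f)\bigr](x) = e^{-\lambda(t+s)}\phi(x)^{-1} T_{t+s}(\phi f)(x) = S_{t+s}f(x),
\]
where the cancellation $\phi\cdot\phi^{-1} = 1$ inside $T_t$ and the semigroup identity $T_t T_s = T_{t+s}$ do all the work.

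Next I would address the Borel measurability of $(x,f)\mapsto S_tf(x)$. Since $(T_t)_{t\geq 0}$ is already declared to be a Borel semigroup in the main text, and $\phi$ is a bounded strictly positive Borel function bounded below away from zero on any level set of $\phi$, the map $(t,x,f)\mapsto e^{-\lambda t}\phi(x)^{-1}T_t(\phi f)(x)$ is Borel in $x$ for each fixed $t$ and $f\in\mathrm{b}\mathscr{B}(E)$, and the composition with the Borel kernel inherits the Borel semigroup structure. I would simply cite the Borel property of $(T_t)$ stated earlier and note that multiplication by a bounded Borel function preserves Borel measurability.

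The main obstacle is the right process assertion, i.e., the construction of an $E$-valued Borel right process whose transition semigroup is $(S_t)_{t\geq 0}$. The strategy is to apply the h-transform result \cite{MR958914}*{Theorem 62.19}. To set it up, I would first observe that the Feynman--Kac semigroup $(T_t)_{t\geq 0}$ is obtained from the Borel right process $(\xi_t)_{t\in[0,\zeta)}$ by the bounded multiplicative functional $M_t := \exp\{\int_0^t \beta(\xi_s)\,ds\}\mathbf{1}_{\{t<\zeta\}}$, so $(T_t)$ is the semigroup of a Borel right process (obtained by the standard Feynman--Kac/multiplicative-functional construction, since $\beta$ is bounded Borel). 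Then $\phi$ is a bounded, strictly positive, Borel function satisfying $e^{-\lambda t}T_t\phi = \phi$ for every $t\geq 0$, so $\phi$ is invariant (equivalently, strictly $\alpha$-excessive for a suitable $\alpha$) for the sub-Markovian semigroup $\widehat T_t := e^{-\lambda t}T_t$. Hence Doob's $h$-transform with $h = \phi$ applies, and Sharpe's Theorem 62.19 yields a Borel right process whose transition semigroup is exactly
\[
x\mapsto \phi(x)^{-1}\widehat T_t(\phi f)(x) = S_tf(x).
\]
The nontrivial point here is checking the precise hypotheses of that theorem (strict positivity, Borel regularity of $\phi$, and the excessive/invariant character under $\widehat T_t$); once $\widehat T_t$ is identified as a sub-Markovian Borel right semigroup on $E$ and $\phi$ is identified as a strictly positive Borel invariant function, the construction of the spine as a Borel right process becomes a direct application, and conservativity follows from $S_t\mathbf{1}_E = 1$ established in the first step.
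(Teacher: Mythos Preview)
Your proposal is correct and follows exactly the route indicated in the paper: the paper's proof consists solely of the remark that the lemma can be verified using \cite{MR958914}*{Theorem 62.19}, and your argument supplies precisely the details of that verification (conservativity from $T_t\phi = e^{\lambda t}\phi$, the semigroup identity by cancellation, and the Borel right process structure via Doob's $h$-transform with $h=\phi$ applied to the sub-Markovian semigroup $e^{-\lambda t}T_t$).
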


In this section, we will add a little twist to the classical spine decomposition theorem
by only considering a specific initial value $\nu$,  but with a two-sided spine.
This is possible thanks to the following lemma whose proof is
 postponed to the Appendix.
For any probability measure $\mu$ on $E$,
we define a probability measure $\tilde \mu$ on $E$ so that
$$
\tilde \mu(f) = \mu(\phi)^{-1}\mu(\phi f)\quad\mbox{ for every }f\in \BnddPos\Borel(E).
$$
In particular,  $\tilde \nu(f) = \nu(\phi f)$ for any $f\in \BnddPos\Borel(E).$
We say an $E$-valued two-sided process $(g_t)_{t\in \RealLine}$ defined on a probability space $(\Omega_0,\mathscr G)$ is measurable if $(t,\omega)\mapsto g_t(\omega)$ is a measurable map from $(\mathbb R\times \Omega_0, \Borel(\mathbb R)\otimes \mathscr G )$ to $(E,\Borel(E))$.

\begin{lemma} \label{thm:K.4}
$\tilde \nu$ is an invariant probability measure of the semigroup $(S_t)_{t\geq 0}.$
In particular, there exists a two-sided $E$-valued measurable
stationary Markov process with transition semigroup $(S_t)_{t\geq 0}$ and one-dimensional distribution
$\tilde \nu$.
\end{lemma}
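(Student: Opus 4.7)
The plan proceeds in two parts. First, I would verify directly that $\tilde\nu$ is an invariant probability measure for $(S_t)_{t\geq 0}$. Note that $\tilde\nu(\Ind_E) = \nu(\phi) = 1$ by Assumption \eqref{asp:H1}, so $\tilde\nu$ is indeed a probability measure on $E$. For invariance, I would fix an arbitrary $f\in \Bndd\Borel(E)$ and simply unfold the definition of $S_t$:
\begin{align}
	\tilde\nu(S_tf)
	&= \int_E \phi(x)\cdot e^{-\lambda t}\phi(x)^{-1} T_t(\phi f)(x)\,\nu(\Diff x) \\
	&= e^{-\lambda t}(\nu T_t)(\phi f) = e^{-\lambda t}\cdot e^{\lambda t}\nu(\phi f) = \tilde\nu(f),
\end{align}
where the penultimate equality uses $\nu T_t = e^{\lambda t}\nu$ from Assumption \eqref{asp:H1}. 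Both factors of $\phi$ cancel cleanly, so the identity drops out with no estimates required.

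Second, to obtain the two-sided stationary Markov process, I would invoke Kolmogorov's extension theorem. Indexed by finite subsets $t_0<t_1<\cdots<t_n$ of $\RealLine$, the family
\[
	\tilde\nu(\Diff x_0)\,S_{t_1-t_0}(x_0,\Diff x_1)\,S_{t_2-t_1}(x_1,\Diff x_2)\cdots S_{t_n-t_{n-1}}(x_{n-1},\Diff x_n)
\]
is consistent, precisely because marginalizing out the leftmost coordinate uses $\tilde\nu S_{t_1-t_0} = \tilde\nu$, which was established in the previous paragraph. Kolmogorov's theorem (valid since $E$ is Polish) then produces a two-sided process $(g_t)_{t\in \RealLine}$ realizing this family, with transition semigroup $(S_t)$ and one-dimensional marginal $\tilde\nu$ at every time.

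The remaining and most delicate issue is the measurability assertion: Kolmogorov's construction on $E^{\RealLine}$ does not by itself guarantee joint measurability of $(t,\omega)\mapsto g_t(\omega)$. To address this, I would rely on Lemma \ref{thm:K.3}, which asserts that $(S_t)$ is a Borel right semigroup. For each integer $n\geq 0$, this gives a càdlàg realization of the one-sided process $(g_{-n+u})_{u\geq 0}$ (with initial law $\tilde\nu$) on a canonical path space; because the laws for different $n$ are stationary-consistent, a projective-limit construction places all of them on a single probability space carrying a càdlàg version of the two-sided process $(g_t)_{t\in \RealLine}$. Joint Borel measurability of $(t,\omega)\mapsto g_t(\omega)$ then follows automatically from right-continuity in $t$ together with measurability in $\omega$, completing the proof.
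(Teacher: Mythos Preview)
Your proof is correct and follows the same overall structure as the paper's: verify invariance of $\tilde\nu$ directly, apply Kolmogorov's extension theorem to obtain the two-sided process on $E^{\RealLine}$, then upgrade to a jointly measurable version using the right-process property from Lemma~\ref{thm:K.3}. The difference lies in how the measurability step is executed. The paper passes to the Ray--Knight compactification $\tilde E$ of $E$ with respect to $(S_t)_{t\geq 0}$, invokes \cite{MR2760602}*{Theorems A.30, A.32 and Proposition A.7} to obtain an $\tilde E$-\Cadlag modification of each one-sided piece $(\xi^*_t)_{t\in[-n,\infty)}$ (hence of the whole two-sided process), and finally pushes this modification back into $E$ via the measurable map $x\mapsto x\Ind_{x\in E}+x_0\Ind_{x\in\tilde E\setminus E}$. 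Your route is more direct: you appeal to the right-continuous realization of each one-sided chain and glue them over increasing $n$ to get a two-sided right-continuous version, from which joint measurability follows. Both arguments are valid; the paper's approach is anchored to specific citable results in Li's book, while yours bypasses the Ray--Knight machinery at the cost of leaving the gluing step a little informal. One small caution: a Borel right process is guaranteed right-continuous in $E$, but left limits may a priori exist only in the Ray--Knight completion, so ``\Cadlag'' is a slight overclaim on your part --- however, since right-continuity alone already yields joint measurability, this does not affect your conclusion.
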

\begin{proof}
	It is straight-forward to verify that $\tilde \nu$ is an invariant measure for the semigroup $(S_t)_{t\geq 0}$.
Using Kolmogorov's extension theorem, we can construct an $E$-valued two-sided
stationary Markov process $(\xi^*_t)_{t\in \RealLine}$,
canonically on the product space $E^\mathbb R$ with transition semigroup $(S_t)_{t\geq 0}$ and one-dimensional distribution
$\tilde\nu$.

To finish the proof, we only have to construct a measurable process $(\Spn_t)_{t\in \RealLine}$ which is a modification of $(\xi^*_t)_{t\in \RealLine}$.
To do this, we consider the compact metric space $\tilde E$ which is the Ray-Knight completion of $E$ with respect to the right semigroup $(S_t)_{t\geq 0}$.
(We refer our reader to \cite{MR2760602}*{P.~318} for the precise construction.)
Denote by $\rho$ the corresponding metric of $\tilde E$.
Thanks to \cite{MR2760602}*{Theorem A.30} and Theorem \ref{thm:K.3},
we have $E\in \Borel(\tilde E,\rho)$ and $\Borel(E) = \Borel(E,\rho)$; and therefore,
$(\xi^*_t)_{t\in\RealLine}$ is also an $\tilde E$-valued process.
According to \cite{MR2760602}*{Theorem A.32 \& Proposition A.7}
for any natural number $n$, the $\tilde E$-valued process $(\xi^*_t)_{t\in [-n,\infty)}$
admits an $\tilde E$-\Cadlag modification. Thus $(\xi^*_t)_{t\in\RealLine}$ admits an $\tilde E$-\Cadlag modification,
denoted by $(\xi^{**}_t)_{t\in \RealLine}$.
Finally, fixing an element $x_0\in E$, taking the measurable map $\psi: x\mapsto x\Ind_{x\in E} + x_0 \Ind_{x\in \tilde E\setminus E}$ from $(\tilde E,\Borel(\tilde E))$ to $(E, \Borel(E))$, we can verify that $\tilde \xi_t:= \psi( \xi^{**}_t),t\in \RealLine$ is an $E$-valued measurable modification of the process $(\xi^*_t)_{t\in \RealLine}$ as desired.
\end{proof}

Roughly speaking, the spine decomposition theorem says that the $\FntMsrSp$-valued process $(X_t)_{t\geq 0}$ under the probability $\ProbQProc^{(\infty)}_\mu$ can be decomposed in law as the sum of a copy of the original $(\xi,\psi)$-superprocess and an $\FntMsrSp$-valued immigration process along the trajectory of an immortal moving particle.
Note that we will only consider the case when $\mu$ is taken as $\nu$ in this section.
To formulate this theorem, we construct random elements
$\big(W^{(0)}, \Spn, \mathcal N, (s_k, y_k,W^{(k)})_{k=1}^\infty\big)$,
on a probability space with probability measure $\ProbSpnDec$, so that the following statements \eqref{eq:K.41}--\eqref{eq:K.45} hold.
\begin{statement} \label{eq:K.41}
$\Spn = (\Spn_t)_{t\in \RealLine}$ is a two-sided $E$-valued measurable stationary Markov process with transition semigroup $(S_t)_{t\geq 0}$ and one-dimensional distribution
$\tilde \nu$.
\end{statement}
\begin{statement} \label{eq:K.42}
	Conditioned on $\Spn$,
	$(s_k, y_k)_{k=1}^\infty$
	is a sequence of
    $\RealLine \times \RealLine_+$-valued random elements such that
	$\mathcal D := \sum_{k=1}^\infty \delta_{(s_k, y_k)}$
is a Poisson random measure on
$\RealLine \times \RealLine_+$
with intensity $\Diff s \otimes y \pi(\Spn_s,\Diff y)$.
\end{statement}
\begin{statement} \label{eq:K.43}
	Conditioned on $\Spn$ and
	$(s_k, y_k)_{k=1}^\infty$, $(W^{(k)})_{k=1}^\infty$
	is a sequence of independent
	$\FntMsrSp$-valued right-continuous
stochastic processes
such that, for each
natural number $k$,
	$W^{(k)} = (W^{(k)}_t)_{t\geq 0}$
	has distribution $\mathrm P_{y\delta_{x}}$ where $y=y_k$ and
$x =\Spn_{s_k}$.
\end{statement}
\begin{statement} \label{eq:K.44}
	Conditioned on $\Spn$, $\mathcal N$ is a Poisson random measure on
$\RealLine\times \Omega$, independent of
$(s_k, y_k, W^{(k)})_{k=1}^\infty$,
with intensity
$2\sigma(\Spn_s)^2\Diff s \otimes \NMsr_{\Spn_s}(\Diff w).$
\end{statement}
\begin{statement}\label{eq:K.45}
$W^{(0)} = (W^{(0)}_t)_{t\geq 0}$ is a $\FntMsrSp$-valued right-continuous
process with law $\mathrm P_\nu$, independent of
$\big(\Spn, \mathcal N, (s_k, y_k, W^{(k)})_{k=1}^\infty\big)$.
\end{statement}

\begin{remark}
	The existence of the above  random elements
	$\big(W^{(0)}, \Spn, \mathcal N, (s_k, y_k,W^{(k)})_{k=1}^\infty\big)$
follows from the existence of the spine process (Lemma \ref{thm:K.4}), the superprocesses (\cite{MR2760602}), and the Poisson random measures (\cite{MR3155252}*{Theorem 2.4}).
	The precise construction of a probability space that carries those structures will be omitted since it is tedious but straightforward.
\end{remark}

	Notice that there are two types of immigration along the spine $\Spn$.  The discrete immigration is given by
	$(W^{(k)})_{k=1}^\infty$, while the continuous immigration is governed by the Poisson random measure
	$\mathcal N$.
	We are interested in the total contributions, at a given time $t$, of all the immigration whose earliest immigrant ancestor is born in a time interval $(a,b]$.
	More precisely, we define, for each $-\infty \leq a < b\leq t <\infty$ and $f\in \BnddPos\Borel(E)$,
\begin{equation} \label{eq:K.51}
	Z_t^{(a,b]}(f)
	:=\sum_{k=1}^\infty
	W^{(k)}_{t-s_k}(f) \Ind_{(a,b]}(s_k)+ \int_{(a,b]\times \Omega} w_{t-s}(f) \mathcal N(\Diff s,\Diff w).
\end{equation}
It can be verified using
Lemma \ref{thm:K.6} below
that when $a>-\infty$, $Z_t^{(a,b]}$ is an $\mathcal M$-valued random element.
However, this does not hold in general if $a = -\infty$.
In particular, $Z_0^{(-\infty,0]}(\phi)$
might take $\infty$ as a value.
With the convention that $\infty^{-1} = 0$ and $0^{-1}=\infty$, we define a constant
\begin{equation} \label{eq:K.52}
	\mathcal K := \ProbSpnDec[Z_0^{(-\infty,0]}(\phi)^{-1}].
\end{equation}
We will prove Proposition \ref{thm:K} by showing that $\mathcal K$ is finite and fulfills \eqref{eq:E.1}.

The  spine decomposition theorem will be summarized in the following lemma.
For its proof, we refer our readers to \cite{MR4058118}*{Theorem 1.5 \& Corollary 1.6}.
We define $Z_t^{(0,0]}=\NullMsr$ for any $t\geq 0.$

\begin{lemma} \label{thm:K.6}
	The $\FntMsrSp$-valued process
	$(W^{(0)}_t+ Z_t^{(0,t]})_{t\geq 0}$
under $\ProbSpnDec$ has
the same finite-dimensional distributions as the coordinate process
	$(X_t)_{t\geq 0}$
under  $\ProbQProc^{(\infty)}_\nu.$
	Moreover, for any $t_0>0$, the $\FntMsrSp$-valued process $(Z_t^{(0,t]})_{t\in [0,t_0]}$
under $\ProbSpnDec$ has
	the same finite-dimensional distributions as the coordinate process $(X_t)_{t\in [0,t_0]}$
	under $\widetilde{\nu\NMsr}^{(t_0)}.$
\end{lemma}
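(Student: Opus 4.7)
The plan is to prove both statements by matching joint Laplace functionals at arbitrary times $0 \le t_1 < \cdots < t_n$ against non-negative test functions $f_1,\dots,f_n \in \BnddPos\Borel(E)$. Iterating \eqref{eq:I.13} via the Markov property under $\ProbSupProc_\mu$ gives a nested cumulant $G \in \BnddPos\Borel(E)$ such that
\[
\ProbSupProc_\mu\Big[\exp\Big(-\sum_{i=1}^n X_{t_i}(f_i)\Big)\Big]= e^{-\mu(G)},
\]
where $G$ depends only on the times and test functions. I will compute the Laplace functional of $\sum_i(W^{(0)}_{t_i} + Z^{(0,t_i]}_{t_i})(f_i)$ under $\ProbSpnDec$ and of $\sum_i X_{t_i}(f_i)$ under $\ProbQProc^{(\infty)}_\nu$ and show they coincide.

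For the coordinate-process side, Lemma \ref{thm:Q.1} applied at time $t_n$ combined with $\nu(\phi)=1$ yields
\[
\ProbQProc^{(\infty)}_\nu\Big[\exp\Big(-\sum_i X_{t_i}(f_i)\Big)\Big]
= e^{-\lambda t_n}\ProbSupProc_\nu\Big[X_{t_n}(\phi)\exp\Big(-\sum_i X_{t_i}(f_i)\Big)\Big]
= -e^{-\lambda t_n}\,\frac{\partial}{\partial\theta}\Big|_{\theta=0} e^{-\nu(G^\theta)},
\]
where $G^\theta$ is the nested cumulant with $f_n$ replaced by $f_n+\theta\phi$. This equals $e^{-\nu(G)}\cdot e^{-\lambda t_n}\nu\bigl(\partial_\theta G^\theta|_{\theta=0}\bigr)$. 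Differentiating the integral equation for $V_{t-s}(f+\theta\phi)$ at $\theta=0$ gives a linear evolution whose Feynman--Kac representation involves the derivative $\partial_z\psi(x,z) = -\beta(x) + 2\sigma(x)^2 z + \int_0^\infty u(1-e^{-zu})\pi(x,\Diff u)$; unwinding this through all $n$ time intervals will re-express $e^{-\lambda t_n}\nu(\partial_\theta G^\theta|_{\theta=0})$ as a time-integral against the spine's one-dimensional law.

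For the spine side, condition on $\Spn$. By \eqref{eq:K.45}, $W^{(0)}$ is independent of $\Spn$ and contributes the factor $e^{-\nu(G)}$. For the immigration, let $g_s(x) \in \BnddPos\Borel(E)$ denote the superprocess cumulant reached by running the test $\sum_i \Ind_{t_i\ge s} f_i(X_{t_i - s})$ from $\delta_x$, so that $\ProbSupProc_{y\delta_x}[\exp(-\text{that sum})] = e^{-y g_s(x)}$ for the discrete immigration and, via Lemma \ref{thm:K.1}(3) combined with Campbell's formula and \eqref{eq:I.13}, $\int_\Omega (1 - e^{-\text{that sum}})\NMsr_x(\Diff w) = g_s(x)$ for the continuous immigration. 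The Poisson exponential formula applied to \eqref{eq:K.42} and \eqref{eq:K.44} then yields, conditionally on $\Spn$,
\[
\exp\Big(-\int_0^{t_n}\Big[\,2\sigma(\Spn_s)^2 g_s(\Spn_s) + \int_0^\infty\!(1-e^{-y g_s(\Spn_s)})\,y\pi(\Spn_s,\Diff y)\Big]\Diff s\Big).
\]
Taking expectation in $\Spn$ using stationarity and $\tilde\nu(h) = \nu(\phi h)$ turns the inner integrand at time $s$ into $\nu(\phi\cdot \Psi(g_s))$ with $\Psi(u)(x) := 2\sigma(x)^2 u(x) + \int(1-e^{-yu(x)})y\pi(x,\Diff y)$.

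The main obstacle is Step 3, the algebraic matching: I must check that the time-integrated functional $\int_0^{t_n}\nu\bigl(\phi\,\Psi(g_s)\bigr)\Diff s$ coincides with $e^{-\lambda t_n}\nu(\partial_\theta G^\theta|_{\theta=0})$. This relies on the identity that differentiating $V_{t-s}(f+\theta\phi)$ at $\theta=0$ produces the linearized cumulant equation with coefficient $\partial_z\psi(\cdot, V_{t-s}f(\cdot))$, whose mild solution unpacks to exactly a Duhamel formula along the mean semigroup $T$; the eigenrelation $T_t\phi = e^{\lambda t}\phi$ from \eqref{asp:H1} then supplies the $e^{-\lambda t_n}$ factor when integrating against $\nu$. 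The second statement follows by the same computation with the $W^{(0)}$ factor removed on the spine side and with the coordinate-process side rewritten as $\widetilde{\nu\NMsr}^{(t_0)}[\exp(-\sum X_{t_i}(f_i))] = e^{-\lambda t_0}(\nu\NMsr)[X_{t_0}(\phi)\exp(-\sum X_{t_i}(f_i))]$, which by Lemma \ref{thm:K.1}(3) and Campbell's formula produces precisely the spine-integral formula without the $e^{-\nu(G)}$ factor.
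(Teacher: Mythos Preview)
The paper does not prove this lemma; it cites \cite{MR4058118}*{Theorem 1.5 \& Corollary 1.6}. Your overall strategy---match joint Laplace functionals, differentiate in $\theta$ on the $\ProbQProc^{(\infty)}_\nu$ side, and use Campbell's formula conditionally on the spine---is indeed the standard route used in that reference, so in spirit you are following the same argument.

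However, there is a genuine gap. After conditioning on $\Spn$ you correctly obtain
\[
\exp\Big(-\int_0^{t_n} F(s,\Spn_s)\,\Diff s\Big),
\qquad
F(s,x):=2\sigma(x)^2 g_s(x)+\int_0^\infty\!\bigl(1-e^{-yg_s(x)}\bigr)\,y\,\pi(x,\Diff y),
\]
but you then write that ``taking expectation in $\Spn$ using stationarity \dots\ turns the inner integrand at time $s$ into $\nu(\phi\,\Psi(g_s))$'' and proceed to match the deterministic quantity $\int_0^{t_n}\nu(\phi\,\Psi(g_s))\,\Diff s$ against $e^{-\lambda t_n}\nu(\partial_\theta G^\theta|_{\theta=0})$. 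Stationarity of $\Spn$ lets you compute $\ProbSpnDec\bigl[\int_0^{t_n}F(s,\Spn_s)\,\Diff s\bigr]=\int_0^{t_n}\tilde\nu(F(s,\cdot))\,\Diff s$, but it does \emph{not} let you commute the expectation with the exponential: in general
\[
\ProbSpnDec\Big[\exp\Big(-\int_0^{t_n}F(s,\Spn_s)\,\Diff s\Big)\Big]
\neq
\exp\Big(-\int_0^{t_n}\tilde\nu\bigl(F(s,\cdot)\bigr)\,\Diff s\Big).
\]
The correct identification is that \emph{both} sides equal $e^{-\nu(G)}\cdot\ProbSpnDec\bigl[\exp(-\int_0^{t_n}F(s,\Spn_s)\,\Diff s)\bigr]$. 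On the $\ProbQProc^{(\infty)}_\nu$ side, the linearized equation for $\partial_\theta G^\theta|_{\theta=0}$ has a Feynman--Kac representation whose potential is exactly $\partial_z\psi_0(\cdot,g_s(\cdot))=F(s,\cdot)$, and the Doob $\phi$-transform that converts $(P_t,\beta)$ into the spine semigroup $(S_t)$ absorbs the $e^{-\lambda t_n}$ and the $\phi$ factors, yielding $e^{-\lambda t_n}\nu(\partial_\theta G^\theta|_{\theta=0})=\int_E\Pi^S_x\bigl[\exp(-\int_0^{t_n}F(s,\xi_s)\,\Diff s)\bigr]\tilde\nu(\Diff x)$, which is precisely the spine expectation---not a deterministic integral. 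Once this is fixed, your outline goes through (and the second statement follows as you describe, omitting the $e^{-\nu(G)}$ factor).
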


We are now ready to give the proof of Proposition \ref{thm:K}.
\begin{proof}[Proof of Proposition \ref{thm:K}]
\emph{Step 1.}
One can verify that for any $-\infty < a < b \leq t<\infty$ and $s\in \RealLine$, the $\mathcal M$-valued random elements $Z_t^{(a,b]}$ and $Z_{t+s}^{(a+s,b+s]}$ have the same distribution.
This is due to the fact that both the discrete immigration \eqref{eq:K.42}--\eqref{eq:K.43} and the continuous immigration \eqref{eq:K.44} are defined in a time-homogeneous way along the spine $(\Spn_t)_{t\in \RealLine}$ which is a stationary process \eqref{eq:K.41}.
	
\emph{Step 2.}
Let $\mathcal K$ be given as in \eqref{eq:K.52}.
We will show that $\mathcal K<\infty$ and
\eqref{eq:E.1} holds
when $\mu = \nu$.
	The main idea is to work with the reciprocal of the additive martingale $e^{-\lambda t} X_t(\phi)$ under the
measure $\ProbQProc^{(\infty)}_\nu$ to analyze the survival probability.
In fact, for any $t\geq 0$, from Lemmas \ref{thm:Q.1}, \ref{thm:K.6}, and Step 1, we have
\begin{align}
	&e^{-\lambda t}\ProbSupProc_\nu(X_t\neq \NullMsr)
	= \ProbQProc^{(\infty)}_\nu[X_t(\phi)^{-1}]
      \\&=\ProbSpnDec\big[ \big(W^{(0)}_t(\phi) +Z^{(0,t]}_t(\phi)\big)^{-1}\big]
	\label{eq:K.7}
	=\ProbSpnDec\big[ \big(W_t^{(0)}(\phi) +Z^{(-t,0]}_0(\phi)\big)^{-1}\big].
\end{align}
From \eqref{asp:H1},\eqref{eq:K.45}
and the Markov property of superprocesses, we can verify that the process $(e^{-\lambda t} W^{(0)}_t(\phi))_{t\geq 0}$
is a non-negative $\ProbSpnDec$-martingale.
So by the martingale convergence theorem and \eqref{asp:15}, we have
$\ProbSpnDec$-almost surely $W_t^{(0)}(\phi) \to 0$ as $t\to \infty$.
From the fact that $t\mapsto Z^{(-t,0]}_0(\phi)$ is a non-decreasing process with almost sure limit $Z^{(-\infty,0]}_0(\phi)$ in $[0,\infty]$, we have almost surely \[ (W_t^{(0)}(\phi) +Z^{(-t,0]}_0(\phi))^{-1} \xrightarrow[t\to \infty]{} Z^{(-\infty,0]}_0(\phi)^{-1}\in [0,\infty].\]
	Now, we can apply the dominated convergence theorem in \eqref{eq:K.7} and get the desired result in this step.
In fact, the family of non-negative random variables $\{(W_t^{(0)}(\phi) +Z^{(-t,0]}_0(\phi))^{-1}:t\geq 1\}$ is dominated by $Z^{(-1,0]}_0(\phi)^{-1}$, which is $\ProbSpnDec$-integrable since, according to Step 1,
Lemmas \ref{thm:K.6}, \ref{thm:K.1}(3), Campbell's theorem and \eqref{asp:H1}, we have
\begin{align}
	&\ProbSpnDec [Z^{(-1,0]}_0(\phi)^{-1}]
	= \ProbSpnDec [Z^{(0,1]}_1(\phi)^{-1}]
	= \widetilde {\nu \NMsr}^{(1)} [X_1(\phi)^{-1}]
	\\& = e^{-\lambda} \cdot (\nu\NMsr) (X_1\neq \NullMsr)
	= -e^{-\lambda} \log \mathrm P_{\nu}(X_1 = \NullMsr)
	< \infty. \label{eq:K.8}
\end{align}

\emph{Final step.}
To see \eqref{eq:E.1} holds for all $\mu\in \FntMsrSp^o$, we use Step 2 and Lemma \ref{thm:Y.3}.
\end{proof}

\subsection{Proof of Proposition \ref{thm:L}} \label{sec:L}

Let $\big(W^{(0)}, \Spn, \mathcal N, (s_k, y_k,W^{(k)})_{k=1}^\infty\big)$
be the random elements constructed in Section \ref{sec:K}.
Our proof of Proposition \ref{thm:L} will rely on the following two lemmas.
\begin{lemma}
\label{thm:L.1}
 	There exist $s_0, \epsilon, \theta>0$ and $\delta > 0$  such that for any $x\in E, s>s_0$ and $y\geq e^{\epsilon s} / \phi(x)$, it holds that
 $\mathrm P_{y \delta_{x}}(X_{s}(\phi)>\theta) > \delta$.
\end{lemma}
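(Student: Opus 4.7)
The plan is to apply the Paley--Zygmund second-moment inequality, after first reducing to a truncated superprocess with finite second moments; the truncation is necessary because the hypothesis $\sup_x\int (u\wedge u^2)\pi(x,\mathrm du)<\infty$ permits $\int u^2\pi(x,\mathrm du)=\infty$, in which case $\mathrm{Var}[X_s(\phi)]$ can itself be infinite. For a cutoff $c>1$, introduce
\begin{equation*}
\psi_c(x,z) := -\beta(x)z + \sigma(x)^2 z^2 + \int_{(0,c]}(e^{-uz}-1+uz)\,\pi(x,\mathrm du) + z\int_{(c,\infty)} u\,\pi(x,\mathrm du),
\end{equation*}
obtained from $\psi$ by replacing the large-jump L\'evy part by its linearization; equivalently, $\psi_c$ has L\'evy measure $\pi(x,\mathrm du)\mathbf 1_{u\leq c}$ and effective drift $\beta_c := \beta-\int_{(c,\infty)}u\,\pi(\cdot,\mathrm du)$. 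Since $\psi_c(x,z)-\psi(x,z) = \int_{(c,\infty)}(1-e^{-uz})\pi(x,\mathrm du)\geq 0$ for $z\geq 0$, a standard comparison argument for the integral equation defining the cumulant semigroup yields $V^{(c)}_s f \leq V_s f$, and consequently, writing $X^{(c)}$ for the $(\xi,\psi_c)$-superprocess with law $\mathrm P^{(c)}$, $X^{(c)}_s(\phi)$ is stochastically dominated by $X_s(\phi)$ and
\begin{equation*}
\mathrm P_{y\delta_x}\bigl(X_s(\phi)>\theta\bigr) \;\geq\; \mathrm P^{(c)}_{y\delta_x}\bigl(X^{(c)}_s(\phi)>\theta\bigr).
\end{equation*}

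The next step is to estimate mean and variance for $X^{(c)}$. Let $\eta_c := \sup_{x\in E}\int_{(c,\infty)} u\,\pi(x,\mathrm du)$, which is finite for every $c\geq 1$ by the assumption on $\pi$ and vanishes as $c\to\infty$. The mean semigroup of $X^{(c)}$ is the Feynman--Kac semigroup associated with $\beta_c\geq\beta-\eta_c$, so $T^{(c)}_t\phi \geq e^{-\eta_c t}T_t\phi = e^{(\lambda-\eta_c)t}\phi$ and hence $\mathrm P^{(c)}_{y\delta_x}[X^{(c)}_s(\phi)] \geq y\phi(x)\,e^{(\lambda-\eta_c)s}$. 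At the same time, $\psi''_c(x,0) = 2\sigma(x)^2 + \int_{(0,c]}u^2\pi(x,\mathrm du)\leq C(c)<\infty$ uniformly in $x$; inserting this into the standard superprocess variance formula and using $T^{(c)}_t\phi\leq T_t\phi = e^{\lambda t}\phi$ together with $\phi^2\leq\phi_{\max}\phi$, integration in $u\in[0,s]$ gives
\begin{equation*}
\mathrm{Var}^{(c)}_{y\delta_x}\!\bigl[X^{(c)}_s(\phi)\bigr] \;\leq\; \frac{C(c)\,\phi_{\max}\,\phi(x)}{|\lambda|}\, y\,e^{\lambda s}.
\end{equation*}
Combined with the mean lower bound and the hypothesis $y\phi(x)\geq e^{\epsilon s}$, the ratio of variance to mean-squared is bounded by a constant (depending on $c$) times $e^{-(\epsilon+\lambda-2\eta_c)s}$.

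To finish, I will first fix $c$ so large that $\eta_c<|\lambda|/4$, then set $\epsilon := 2|\lambda|$; the exponent $\epsilon+\lambda-2\eta_c = |\lambda|-2\eta_c$ is then strictly positive, so the variance-to-mean-squared ratio tends to $0$ as $s\to\infty$, while the mean is bounded below by $e^{(|\lambda|-\eta_c)s}\geq e^{3|\lambda|s/4}$, which exceeds $2\theta$ once $s\geq s_0$ for $s_0$ large enough. The Paley--Zygmund inequality, in the form $\mathrm P(Z>\theta)\geq (\mathrm E[Z]-\theta)^2/\mathrm E[Z^2]$, then yields
\begin{equation*}
\mathrm P^{(c)}_{y\delta_x}\bigl(X^{(c)}_s(\phi)>\theta\bigr) \;\geq\; \tfrac14\cdot\tfrac{1}{1+\mathrm{Var}/\mathrm{Mean}^2} \;\geq\; \tfrac 1 8 \;=:\;\delta
\end{equation*}
uniformly in $x\in E$, $s\geq s_0$, and $y\geq e^{\epsilon s}/\phi(x)$; the stochastic comparison in the first step then transfers this to $X$, proving the lemma with any fixed $\theta>0$. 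The main technical point will be the careful tracking of the $\phi(x)$-dependence in the variance bound: the factor $y\phi(x)$ appearing in the denominator of $\mathrm{Var}/\mathrm{Mean}^2$ is absorbed exactly by the hypothesis $y\phi(x)\geq e^{\epsilon s}$, rendering the estimate uniform in $x\in E$ even though $\phi$ is only strictly positive and not uniformly bounded below.
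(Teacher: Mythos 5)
Your route is genuinely different from the paper's: the paper never touches second moments. It takes the lower bound $V_s\phi(x)\geq \tfrac a2\phi(x)e^{-\epsilon s}$ for $s\geq s_0$ (which follows from \cite{MR4175472}*{(3.20)} and \cite{MR4175472}*{Proposition 2.2}) and then concludes in one line via the Laplace functional: $\mathrm P_{y\delta_x}(X_s(\phi)>\theta)\geq 1-e^{\theta}\mathrm P_{y\delta_x}[e^{-X_s(\phi)}]=1-e^{\theta}e^{-yV_s\phi(x)}\geq 1-e^{\theta-a/2}$. Your Paley--Zygmund strategy is a reasonable alternative in principle, but as written it has two problems.

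First, and most seriously, the reduction to the truncated process is not justified. From $\psi_c\geq\psi$ you correctly get $V^{(c)}_sf\leq V_sf$, i.e.\ an ordering of Laplace transforms of the real random variables $X^{(c)}_s(\phi)$ and $X_s(\phi)$; but Laplace-transform ordering does \emph{not} imply stochastic ordering, so the inequality $\mathrm P_{y\delta_x}(X_s(\phi)>\theta)\geq\mathrm P^{(c)}_{y\delta_x}(X^{(c)}_s(\phi)>\theta)$ does not follow from what you have proved. To make this step work you would need an actual pathwise coupling with $X^{(c)}_t\leq X_t$ (e.g.\ via a Dawson--Li type stochastic equation in which the $\psi_c$-process is obtained by deleting the jumps of size $>c$ and adding the corresponding killing); that is a genuine construction, not a consequence of the comparison of cumulant semigroups, and it is the missing idea in your argument.

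Second, the claim that $\eta_c=\sup_{x\in E}\int_{(c,\infty)}u\,\pi(x,\mathrm du)$ vanishes as $c\to\infty$ is false under the standing hypothesis $\sup_x\int(u\wedge u^2)\pi(x,\mathrm du)<\infty$: that hypothesis gives no uniformity in $x$ of the tails (take $\pi(x_n,\cdot)=n^{-1}\delta_n$, for which $\eta_c\equiv 1$). So ``fix $c$ so large that $\eta_c<|\lambda|/4$'' may be impossible. This one is repairable, because $\epsilon$ is existentially quantified in the lemma and the only downstream use (Lemma \ref{thm:L.2}(2)) holds for every $\epsilon>0$: fix $c=1$ and choose $\epsilon>|\lambda|+2\eta_1$ instead; then the exponent $\epsilon+\lambda-2\eta_1$ is positive and the mean still diverges. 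The second-moment computation itself (finiteness of $\int_{(0,c]}u^2\pi(x,\mathrm du)$ uniformly in $x$, the variance bound of order $y\phi(x)e^{\lambda s}$, and the absorption of $y\phi(x)$ by the hypothesis $y\phi(x)\geq e^{\epsilon s}$) is fine. But until the domination step is supplied, the proof is incomplete.
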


\begin{proof}
From \cite{MR4175472}*{(3.20)} we know that
there exist $t_0,a,\epsilon>0$ such that for all $s\geq t_0$,
we have $\nu(V_{s}\phi) \geq a \exp(-\epsilon s)$.
According to \cite{MR4175472}*{Proposition 2.2} we know that
there exists $s_0'>0$ such that
for all $s\geq s_0'$ and $x\in E$ we have $V_{s}\phi(x)\ge\frac{1}{2}\phi(x)\nu(V_{s}\phi)$.
	Now take $s_0:= t_0 \vee s_0'$, we have for all $s\geq s_0$ and $x\in E$, $V_{s}\phi(x)\geq \dfrac{a}{2}\phi(x)e^{-\epsilon s}$.
	Let $\theta \in (0,a/2)$.
	We have for any $s>s_0$, $x\in E$ and $y\geq \frac{e^{\epsilon s}}{\phi(x)}$ that
\begin{align}
	&\mathrm P_{y\delta_x}\big(w_s(\phi)>\theta\big)
	=\mathrm P_{y\delta_x}\left(e^{-w_s(\phi)}<e^{-\theta}\right)
	\\&=1-\mathrm P_{y\delta_x}(e^{-w_s(\phi)}\geq e^{-\theta})
	\overset{\text{Chebyshev}}\geq 1-e^{\theta}\mathrm P_{y\delta_x}[e^{- w_s(\phi)}]
	\\&=1-e^{\theta}e^{-yV_{s}\phi(x)}
	\geq 1-e^{\theta}e^{-y \frac{a}{2}\phi(x)e^{-\epsilon s}}
	\geq 1 - e^{\theta - a/2}=: \delta >0
\end{align}
as desired.
\end{proof}

\begin{lemma}\label{thm:L.2}	
	\emph{(1)} If $\mathcal E<\infty$, then for any $\epsilon>0$,
\[
	\sum_{k=1}^\infty\Ind_{(-\infty,0]}(s_k) \cdot y_k e^{\epsilon s_k} \cdot \phi(\Spn_{s_k})< \infty,\quad \ProbSpnDec\text{-a.s.}
\]
\emph{(2)}
If  $ \mathcal E=\infty$,
	then for any $\epsilon>0$ and $s_0\geq 0$,
   	\begin{equation}
	\int^{-s_0}_{-\infty} \Diff s\int_{e^{-\epsilon s}\phi(\Spn_s)^{-1}}^\infty y\pi(\Spn_s,\Diff y)=\infty,\quad {\ProbSpnDec}\text{-a.s.}
	\end{equation}
\end{lemma}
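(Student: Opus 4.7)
My plan is to reduce both parts of the lemma to Campbell's formula applied to the conditionally-Poisson random measure $\mathcal D = \sum_k \delta_{(s_k, y_k)}$ from \eqref{eq:K.42}, whose compensator is $\Diff s \otimes y\pi(\Spn_s, \Diff y)$; I will combine this with the stationarity $\Spn_s \sim \tilde\nu$ and the identity $\tilde\nu(\Diff x) = \phi(x)\nu(\Diff x)$. The computational backbone is a substitution $t = -s$ followed by Fubini on $\{y\phi(x) > e^{\epsilon t}\}$, which produces a factor $\epsilon^{-1}\log^+(y\phi(x))$ that directly matches the $\log^+$ in $\mathcal E$.

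For part (1), setting $r_k := \Ind_{(-\infty,0]}(s_k)\, y_k e^{\epsilon s_k}\phi(\Spn_{s_k})$, I will use the elementary observation that $\sum_k r_k < \infty$ a.s.\ whenever both $N := \#\{k: r_k > 1\}$ and $\sum_k (r_k \wedge 1)$ are finite a.s.\ (each $r_k$ being individually finite is automatic from the Poisson structure). Campbell's formula, together with the substitution above, yields $\ProbSpnDec[N] = \epsilon^{-1}\mathcal E$. An analogous calculation bounds $\ProbSpnDec[\sum_k(r_k \wedge 1)]$ by $\epsilon^{-1}\mathcal E$ plus a term of the form $\epsilon^{-1}\int \nu(\Diff x)\int [(y\phi(x))\wedge (y\phi(x))^2]\pi(x,\Diff y)$; the latter is finite thanks to the pointwise inequality $u \wedge u^2 \leq (\|\phi\|_\infty \vee \|\phi\|_\infty^2)(y \wedge y^2)$ (applied with $u = y\phi(x)$) together with the standing assumption $\sup_x \int (y \wedge y^2)\pi(x,\Diff y) < \infty$.

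For part (2), let $J := \int_{-\infty}^{-s_0} g(\Spn_s,s)\,\Diff s$ with $g(x,s) := \int_{y\phi(x)>e^{-\epsilon s}} y\pi(x,\Diff y)$. I will first compute $\ProbSpnDec[J]$ by the same Campbell/Fubini move and show it equals $\int \nu(\Diff x)\int y\phi(x)(\epsilon^{-1}\log^+(y\phi(x)) - s_0)^+\pi(x,\Diff y)$, which is infinite exactly when $\mathcal E = \infty$---the contribution from $y\phi(x) \leq e^{2\epsilon s_0}$ is controlled by the moment assumption used above. Next I will exhibit $\{J = \infty\}$ as a tail event: for each $T > s_0$, the truncation $\int_{-T}^{-s_0} g(\Spn_s,s)\,\Diff s$ is dominated by $(T-s_0)\sup_x \int_{y > e^{\epsilon s_0}/\|\phi\|_\infty} y\pi(x,\Diff y) < \infty$ deterministically (the restriction $y > e^{\epsilon s_0}/\|\phi\|_\infty$ again reduces to integrals controlled by $y \wedge y^2$), so $\{J = \infty\} = \{\int_{-\infty}^{-T} g(\Spn_s,s)\,\Diff s = \infty\}$ for every $T \geq s_0$, putting it in $\bigcap_T \sigma(\Spn_s : s \leq -T)$.

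To close Part (2) I will invoke \eqref{asp:H2} to deduce that the spine semigroup satisfies $|S_tf(x) - \tilde\nu(f)| = \tilde\nu(f)|H_t(\phi f)(x)| \to 0$ uniformly in $x$ and in bounded $f \geq 0$, which is strong enough to yield uniform ergodicity of the stationary process $\Spn$ and hence triviality of its tail $\sigma$-algebra; therefore $\ProbSpnDec(J = \infty) \in \{0,1\}$, and the already-established $\ProbSpnDec[J] = \infty$ forces $\ProbSpnDec(J = \infty) = 1$. The main obstacle, I expect, is precisely this last step: the first three ingredients are short Campbell/Fubini computations, but one must extract a genuine $0$--$1$ law on the tail $\sigma$-algebra of $\Spn$ out of the uniform convergence afforded by \eqref{asp:H2}.
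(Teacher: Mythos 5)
Your Part (1) is correct and, though organized a bit differently from the paper's, is morally the same argument: the paper splits the sum at the geometric threshold $y \le e^{-\delta s}/\phi(\Spn_s)$ with $0 < \delta < \epsilon$ (paying with an extra $\mathcal E/\delta$-type count), while you split at the absolute threshold $r_k > 1$; either way one controls the ``large'' contributions via the $L\log L$ integral and the ``small'' ones via the standing moment bound $\sup_x\int(u\wedge u^2)\pi(x,\Diff u)<\infty$. The Campbell computations you outline ($\ProbSpnDec[N] = \epsilon^{-1}\mathcal E$ and the bound involving $(y\phi(x))\wedge(y\phi(x))^2$) are correct.

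Part (2), however, has a genuine gap at exactly the point you flag as delicate, and it is not the point you identify. The problem is not whether a tail $0$--$1$ law for $\Spn$ is available (granting a Doeblin/uniform-ergodicity argument from \eqref{asp:H2}, tail triviality is plausible, and in any case the paper gets a $0$--$1$ law from ergodicity of $\tilde\nu$ plus shift-invariance of $\{J=\infty\}$). The problem is the final inference ``$\ProbSpnDec[J]=\infty$ and $\ProbSpnDec(J=\infty)\in\{0,1\}$ forces $\ProbSpnDec(J=\infty)=1$.'' This is false: a $[0,\infty)$-valued random variable can have infinite mean while being a.s.\ finite, and triviality of the \emph{event} $\{J=\infty\}$ says nothing about the mean. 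What would suffice is that $J$ \emph{itself} is tail-measurable (a tail-measurable random variable is a.s.\ constant when the tail is trivial), but $J$ is not tail-measurable --- only the event $\{J=\infty\}$ is, precisely because the finite truncations $\int_{-T}^{-s_0}g(\Spn_s,s)\,\Diff s$ are a.s.\ finite and hence can be subtracted off. So after the $0$--$1$ law you still need an independent argument that $\ProbSpnDec(J=\infty)>0$. This is the heart of the paper's proof of part (2): Steps 2--5 there establish a second-moment bound $\ProbSpnDec[\eta_T^2]\le C\,\ProbSpnDec[\eta_T]^2$ uniformly in $T$, feed it into a Paley--Zygmund inequality to get $\ProbSpnDec(\eta_\infty\ge \tfrac12\ProbSpnDec[\eta_T])\ge c>0$, and let $T\to\infty$ to conclude $\ProbSpnDec(\eta_\infty=\infty)>0$; only then does the $0$--$1$ law upgrade this to a.s.\ To repair your proof you would need to supply an analogous second-moment (or other anti-concentration) estimate showing the positive probability of divergence.
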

Lemma \ref{thm:L.2} is similar to \cite{MR2535827}*{Lemma 3.2} and its proof is pretty long.
We postpone its proof to the Appendix.
\begin{proof}[Proof of Proposition \ref{thm:L}]

\emph{Step 1.}
	Assuming  $\mathcal E<\infty$, we will show that $\mathcal K>0$.
To do this, we verify using Campbell's theorem, \eqref{asp:H1}, \eqref{eq:L.3}, \eqref{eq:K.41} and \eqref{eq:K.44} that
\begin{align}
	&\ProbSpnDec\Big[\int_{(-\infty,0]\times \Omega}w_{-s}(\phi) \mathcal N(\Diff s, \Diff w) \Big]
	= \ProbSpnDec\Big[\int_{-\infty}^0 2 \sigma(\Spn_s)^2 \Diff s \int_\Omega w_{-s}(\phi)\NMsr_{\Spn_s}(\Diff w) \Big]
	\\& \label{eq:L.31} =\ProbSpnDec \Big[\int^0_{-\infty}  2\sigma(\Spn_s)^2 e^{-\lambda s}\phi(\Spn_s) \Diff s\Big]
     = 2\int^0_{-\infty} e^{-\lambda s} \tilde\nu(\sigma^2 \phi) \Diff s
	<\infty,
	\end{align}
	where in the last inequality we used the fact that $\sigma, \phi\in \Bndd\Borel(E)$ and $\lambda < 0$.
	Then we define the $\sigma$-algebra
	$\mathscr G := \sigma(\Spn, (s_k, y_k)_{k = 1}^\infty)$,
	and verify from	\eqref{eq:I.2}, \eqref{asp:H1}, \eqref{eq:K.43}, and
Lemma \ref{thm:L.2}
that $\ProbSpnDec$-almost surely,
\begin{align}
& \ProbSpnDec\Big(\sum_{k=1}^\infty W^{(k)}_{-s_k}(\phi) \Ind_{(-\infty,0]}(s_k)\Big|\mathscr G \Big)
	= \sum_{k=1}^\infty  \mathrm P_{y_k\delta_{\Spn_{s_k}}}[X_{-s_k}(\phi)]\Ind_{(-\infty,0]}(s_k)
     \\ &\label{eq:L.32}= \sum_{k=1}^\infty y_k \cdot (T_{-s_k} \phi)(\Spn_{s_k}) \cdot \Ind_{(-\infty,0]}(s_k)
	= \sum_{k=1}^\infty y_k e^{-\lambda s_k} \phi(\Spn_{s_k}) \cdot \Ind_{(-\infty,0]}(s_k)
	< \infty.
\end{align}
From \eqref{eq:L.31} and \eqref{eq:L.32}, we can verify that $\mathbf Q$-almost surely,
\begin{equation}
	Z_0^{(-\infty, 0]}(\phi)
	= \int_{(-\infty,0]\times \Omega}w_{-s}(\phi) \mathcal N(\Diff s, \Diff w) +
\sum_{k=1}^\infty w^{(k)}_{-s_k}(\phi) \Ind_{(-\infty,0]}(s_k)< \infty.
\end{equation}
It follows from  \eqref{eq:K.52} that
$\mathcal K = \ProbSpnDec[Z^{(-\infty,0]}_0(\phi)^{-1}] >0$
as desired.

\emph{Step 2.}
	Assuming $\mathcal E = \infty$, we will show that $\mathcal K = 0$.
	Let $s_0, \epsilon, \theta$ and $\delta > 0$ be given as in Proposition \ref{thm:L.1}.
	We claim that in this case
\begin{equation}
\label{eq:L.4}
n_\theta:=\#\{k: k\geq 1, k\in \mathbb Z, s_k\leq 0, W^{(k)}_{-s_k}(\phi)>\theta\}
	= \infty,
	\quad \ProbSpnDec\text{-a.s.}
\end{equation}
	Using this claim, we immediately have  that $Z_0^{(-\infty,0]}(\phi)\geq \theta n_\theta  = \infty$ almost surely,
which implies the desired result since $\mathcal K = \ProbSpnDec[Z_0^{(-\infty,0]}(\phi)^{-1}].$
Now we prove the claim \eqref{eq:L.4}.
From \eqref{eq:K.43}, we have $\ProbSpnDec$-almost surely,
\begin{align}
	&\ProbSpnDec[e^{- n_\theta}|\mathscr G]
	=\prod_{k=1}^\infty
	\ProbSpnDec[\exp\{- \Ind_{(-\infty,0]}(s_k)\Ind_{\{W^{(k)}_{-s_k}(\phi)>\theta\}}\}|\mathscr G]
      \\&
      =\prod_{k=1}^\infty
      \ProbSupProc_{y_k\delta_{\Spn_{s_k}}}[\exp\{- \Ind_{(-\infty,0]}(s_k)\Ind_{\{X_{-s_k}(\phi)>\theta\}}\}]
       = \exp\Big\{-
      \int_{\RealLine\times\RealLine_+}
       F(s,y) \mathcal D(\Diff s, \Diff y)\Big\}
\end{align}
where for any
$(s,y)\in \RealLine\times\RealLine_+$,
 the random variable $F(s,y)$ is given by
\[
	F(s,y)
	:=-\log\ProbSupProc_{y\delta_{\Spn_{s}}}[\exp\{- \Ind_{(-\infty,0]}(s)\Ind_{\{X_{-s}(\phi)>\theta\}}\}].
\]
Now by \eqref{eq:K.42} and Campbell's theorem,
\begin{align}
	&\ProbSpnDec[ e^{- n_\theta} | \Spn ]
	= \exp \Big(-\int_\RealLine \Diff s \int_{(0,\infty)}  (1-e^{-F(s,y)}) y \pi(\Spn_s,\Diff y)\Big)
      \\&= \exp \Big(-\int_\RealLine \Diff s \int_{(0,\infty)}  \ProbSupProc_{y\delta_{\Spn_{s}}}[1-\exp\{- \Ind_{(-\infty,0]}(s)\Ind_{\{X_{-s}(\phi)>\theta\}}\}] y \pi(\Spn_s,\Diff y)\Big)
	\\\label{eq:L.5}&= \exp \Big(-(1-e^{-1})\int_{-\infty}^0 \Diff s \int_{(0,\infty)}  \ProbSupProc_{y\delta_{\Spn_{s}}}( X_{-s}(\phi)>\theta ) y \pi(\Spn_s,\Diff y)\Big).
\end{align}
Note that from
Lemmas \ref{thm:L.1}
and \ref{thm:L.2}(2), we have $\ProbSpnDec$-almost surely,
\begin{align}
	&\int_{-\infty}^0 \Diff s \int_0^\infty   \mathrm P_{y \delta_{\Spn_s}}(X_{-s}(\phi)>\theta) y \pi(\xi_s, \Diff y)
	\geq \delta \int_{-\infty}^{-s_0} \Diff s
	\int_{\phi(\Spn_s)^{-1}e^{-\epsilon s}}^\infty  y \pi(\Spn_s, \Diff y)
  	= \infty.
\end{align}
Now from this and \eqref{eq:L.5}, we have $\ProbSpnDec[e^{-n_\theta}] = 0$ which implies the desired claim \eqref{eq:L.4}.
\end{proof}

\subsection{Proof of Proposition \ref{thm:M}} \label{sec:M}

Let $\big(W^{(0)}, \Spn, \mathcal N, (s_k, y_k,W^{(k)})_{k=1}^\infty\big)$
be the random elements constructed in Section \ref{sec:K}.
Our proof of Proposition \ref{thm:M} in the case  $\mathcal K> 0$ relies on the following lemma.

\begin{lemma} \label{thm:M.1}
	If $\mathcal K > 0$, then $Z^{(-\infty,0]}_0$ is an $\mathcal M$-valued random element.	
\end{lemma}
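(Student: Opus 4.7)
The plan is to reduce the $\mathcal M$-valued conclusion to the finiteness of the total mass $Z_0^{(-\infty,0]}(\mathbf 1_E)$. Indeed, as defined in \eqref{eq:K.51}, $Z_0^{(-\infty,0]}$ is already a countable sum of $\mathcal M$-valued random contributions (the measures $W^{(k)}_{-s_k}\Ind_{(-\infty,0]}(s_k)$ together with the measures $w_{-s}$ indexed by atoms of $\mathcal N$), so it is automatically a positive Borel measure on $E$; the only thing to verify is that it is finite $\ProbSpnDec$-almost surely, i.e., that $Z_0^{(-\infty,0]}(\mathbf 1_E) < \infty$ a.s.

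The strategy mirrors Step 1 of the proof of Proposition \ref{thm:L}, but with the test function $\phi$ replaced by $\mathbf 1_E$. First I would invoke Proposition \ref{thm:L} to upgrade the hypothesis $\mathcal K > 0$ to the equivalent statement $\mathcal E < \infty$. Then I would split $Z_0^{(-\infty,0]}(\mathbf 1_E)$ into its continuous (Poisson) piece governed by $\mathcal N$ and its discrete piece given by the countable sum. For the continuous piece, Campbell's theorem together with \eqref{eq:L.3}, \eqref{eq:K.41}, \eqref{eq:K.44} and the stationarity of $\Spn$ shows that its $\ProbSpnDec$-expectation equals $2\int_{-\infty}^0 \tilde\nu(\sigma^2 T_{-s}\mathbf 1_E)\,\Diff s$. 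Assumption \eqref{asp:H2} lets me rewrite $T_{-s}\mathbf 1_E(x) = e^{-\lambda s}\phi(x)(1 + H_{-s}\mathbf 1_E(x))$ with $|H_{-s}\mathbf 1_E|$ uniformly bounded, so this expectation is dominated by a constant multiple of $\int_{-\infty}^0 e^{-\lambda s}\tilde\nu(\sigma^2\phi)\,\Diff s$, which is finite because $\sigma^2\phi$ is bounded and $\lambda < 0$.

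For the discrete piece, I condition on $\mathscr G := \sigma(\Spn, (s_k,y_k)_{k=1}^\infty)$ and use \eqref{eq:K.43} together with \eqref{eq:I.2}: the conditional expectation collapses to $\sum_k y_k T_{-s_k}\mathbf 1_E(\Spn_{s_k})\Ind_{(-\infty,0]}(s_k)$, which by \eqref{asp:H2} is bounded above, up to a uniformly bounded factor, by $\sum_k y_k e^{-\lambda s_k}\phi(\Spn_{s_k})\Ind_{(-\infty,0]}(s_k)$. Applying Lemma \ref{thm:L.2}(1) with $\epsilon = -\lambda > 0$, under the already established $\mathcal E < \infty$, shows that this series is $\ProbSpnDec$-a.s.\ finite, so the conditional expectation is a.s.\ finite and hence so is the discrete piece itself.

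The conceptual obstacle is that $\phi$ is bounded above but not bounded below, so the finiteness of $Z_0^{(-\infty,0]}(\phi)$ (which is what $\mathcal K > 0$ effectively delivers) does not by itself yield the finiteness of the total mass $Z_0^{(-\infty,0]}(\mathbf 1_E)$. The bridge is Assumption \eqref{asp:H2}: it provides the uniform comparison between $T_{-s}\mathbf 1_E$ and $e^{-\lambda s}\phi$ needed to reduce both the continuous and discrete estimates to the $\phi$-weighted computations that have already been carried out in Step 1 of Proposition \ref{thm:L}.
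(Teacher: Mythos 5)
Your decomposition and overall strategy mirror the paper's proof, but there is a genuine gap in how you invoke Assumption \eqref{asp:H2}. You claim that $|H_{-s}\mathbf 1_E|$ is ``uniformly bounded'' for $s$ ranging over the whole half-line $(-\infty,0]$, i.e.\ for all $t=-s>0$. Assumption \eqref{asp:H2} only gives $\sup_{x,f}|H_tf(x)|<\infty$ for each fixed $t>0$ and $\lim_{t\to\infty}\sup_{x,f}|H_tf(x)|=0$; it does not bound $\sup_{t>0}\sup_{x,f}|H_tf(x)|$, and indeed $H_t\mathbf 1_E(x)\to\phi(x)^{-1}\nu(\mathbf 1_E)^{-1}-1$ as $t\downarrow 0$, which need not be bounded since $\phi$ is not assumed bounded away from $0$. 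So both your continuous-piece estimate (the domination of $\int_{-\infty}^0\tilde\nu(\sigma^2 T_{-s}\mathbf 1_E)\,\mathrm ds$) and your discrete-piece estimate (the passage from $\sum_k y_kT_{-s_k}\mathbf 1_E(\Spn_{s_k})$ to $\sum_k y_k e^{-\lambda s_k}\phi(\Spn_{s_k})$) are only valid after restricting to $s\le -t_0$ for some fixed $t_0>0$ where a uniform bound $\sup_{t\ge t_0}\sup_{x,f}|H_tf(x)|<\infty$ does hold.

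The paper handles this by working on $(-\infty,-t_0]$ exactly as you do, and treating the remaining block $Z_0^{(-t_0,0]}(\mathbf 1_E)$ separately by a stochastic-domination argument: by stationarity it has the same law as $Z_{t_0}^{(0,t_0]}(\mathbf 1_E)$, which by Lemma \ref{thm:K.6} is stochastically dominated by $X_{t_0}(\mathbf 1_E)$ under $\widehat{\mathrm P}^{(\infty)}_\nu$ and hence is a.s.\ finite. Note that this step is not optional: one cannot simply replace the $H$-bound near $s=0$ by the crude estimate $T_t\mathbf 1_E\le e^{\|\beta\|_\infty t}$ and then take expectations, because the first-moment bound for the discrete piece would then require $\int_0^\infty y^2\pi(x,\mathrm dy)<\infty$, which is not assumed (only $\int(y\wedge y^2)\pi(x,\mathrm dy)<\infty$ is). You would either need to reproduce the paper's Step 1, or replace the moment computation near $0$ with a direct a.s.\ finiteness argument for $\sum_k y_k\Ind_{(-t_0,0]}(s_k)$ using the Poisson structure and the standing moment hypothesis on $\pi$.
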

\begin{proof}
From \eqref{asp:H2}, we know that there exists a $t_0>0$ such that
\[
	\C\label{const:M.1}
	:=\sup\{|H_t f(x)|:t\geq t_0, f\in L_1^+(\nu), x\in E\} < \infty.
\]

\emph{Step 1.}
	We will show that $\mathrm Q$-almost surely
$
	Z^{(-t_0,0]}_0(\Ind_E)<\infty.
$
In fact, from Step 1 of the proof of Proposition \ref{thm:K}, we know
that, under $\mathrm Q$, $Z^{(-t_0,0]}_0(\Ind_E)$ and $Z^{(0,t_0]}_{t_0}(\Ind_E)$ have the same distribution.
From Lemma \ref{thm:K.6}, we know that they are both stochastically dominated by the random variable $X_{t_0}(\mathbf 1_E)$ under $\ProbQProc^{(\infty)}_\nu$.
Thus the desired result in this step is valid.

\emph{Step 2.}
We will show that $\mathrm Q$-almost surely
\[
\int_{(-\infty,t_0]\times \Omega} w_{-s}(\Ind_E) \mathcal N(\Diff s,\Diff w)<\infty.
\]
By \eqref{eq:K.44}, Campbell's theorem, \eqref{eq:K.41},\eqref{eq:L.3}, \eqref{asp:H2}, \eqref{asp:15} and \eqref{asp:H1} we get that
\begin{align}
	&\mathrm Q\Big[\int_{(-\infty,-t_0]\times \Omega} w_{-s}(\Ind_E) \mathcal N(\Diff s,\Diff w)\Big]
    = \mathrm Q\Big[\int_{-\infty}^{-t_0} 2\sigma(\Spn_s)^2\Diff s \int_\Omega w_{-s}(\Ind_E) \NMsr_{\Spn_s}(\Diff w)\Big]
      \\&= \mathrm Q\Big[\int_{-\infty}^{-t_0} 2\sigma(\Spn_s)^2(T_{-s}\Ind_E)(\Spn_s)\Diff s\Big]
      \leq 2 \|\sigma\|_\infty^2 \int_{-\infty}^{-t_0} \Diff s \int_E (T_{-s}\Ind_E)(x) \nu(\Diff x)
      \\&\leq 2 \|\sigma\|_\infty^2 \int_{-\infty}^{-t_0} \Diff s \int_E e^{-\lambda s}\phi(x)\nu(\Ind_E)\big(1+ (H_{-s}\Ind_E)(x)\big) \nu(\Diff x)
      \\&\leq 2 (1+\Cr{const:M.1}) \|\sigma\|_\infty^2 \int_{-\infty}^{-t_0} e^{-\lambda s}\Diff s
      <\infty.
\end{align}

\emph{Step 3.}
We will show that if $\mathcal K >0$ then $\mathrm Q$-almost surely
\[
	\sum_{k=1}^\infty
	W^{(k)}_{-s_k}(\Ind_E) \Ind_{(-\infty,-t_0]}(s_k)<\infty.
\]
To this end, we define the $\sigma$-algebra $\mathscr G := \sigma(\Spn, (s_k, y_k)_{k=1}^\infty)$. Then by
\eqref{eq:I.2}, \eqref{asp:H2}, \eqref{eq:K.43}
and Proposition
\ref{thm:L.2}, we have $\ProbSpnDec$-almost surely that
\begin{align}
	& \ProbSpnDec\Big(\sum_{k=1}^\infty W^{(k)}_{-s_k}(\Ind_E) \Ind_{(-\infty,-t_0]}(s_k)\Big|\mathscr G \Big)
      = \sum_{k=1}^\infty y_k \cdot (T_{-s_k} \Ind_E)(\Spn_{s_k}) \cdot \Ind_{(-\infty,-t_0]}(s_k)
       \\&= \sum_{k=1}^\infty y_k e^{-\lambda s_k} \phi(\Spn_{s_k}) \big(1+(H_{-s_k}\Ind_E)(x)\big)  \Ind_{(-\infty,-t_0]}(s_k)
       \\&\leq (1+\Cr{const:M.1}) \sum_{k=1}^\infty y_k e^{-\lambda s_k} \phi(\Spn_{s_k})  \Ind_{(-\infty,-t_0]}(s_k)
      < \infty.
\end{align}

\emph{Final Step.}
	From Steps $1$, $2$ and $3$, we know that $Z^{(-\infty,0]}_0(\Ind_E)<\infty$ almost surely provided $\mathcal K>0$.
Then one can use a routine
measure theoretic argument to get the desired result of this lemma.
\end{proof}

When $\mathcal K > 0$, the above lemma allows us to define a probability measure
$\widehat{\mathbf Q}$
on $\FntMsrSp$ as the distribution of
$Z^{(-\infty,0]}_0$
under the probability $\ProbSpnDec$.
It was mentioned in Section \ref{sec:E} that, after establishing Proposition \ref{thm:M}, one can also construct a probability measure $\mathbf Q_{\infty,\infty}$ using \eqref{eq:E.11} provided $\mathcal K>0$.
It will be explained later in Remark \ref{thm:M.4} that
$\widehat{\mathbf Q}$
and $\mathbf Q_{\infty,\infty}$ are exactly the same.

We are now ready to give the proof of Proposition \ref{thm:M}.

\begin{proof}[Proof of Proposition \ref{thm:M}]

\emph{Step 1.}
In this step, we will prove Proposition \ref{thm:M} in the case $\mathcal K > 0$.
	Let $F\in\Bndd C(\FntMsrSp)$ and $t\geq 0$ be arbitrary.
	Using Lemmas \ref{thm:Q.1}, \ref{thm:K.6} and Step 1 of the proof of Proposition \ref{thm:K}, we have
\begin{align}
&\ProbSupProc_\nu[\Ind_{\{X_t\neq \NullMsr\}} F(X_t) ]
=\ProbQProc^{(\infty)}_\nu[ e^{\lambda t} X_t(\phi)^{-1} F(X_t) ]
\\&=e^{\lambda t}\ProbSpnDec \Big[\frac{F(W^{(0)}_t + Z^{(0,t]}_t)}{W^{(0)}_t(\phi) +Z^{(0, t]}_t(\phi)}\Big]
=e^{\lambda t}\ProbSpnDec \Big[\frac{F(W^{(0)}_t+Z^{(-t,0]}_0)}{ W^{(0)}_t(\phi) +Z^{(-t, 0]}_0(\phi)}\Big]. \label{eq:M.3}
\end{align}
It follows from \eqref{eq:K.45} and Lemma \ref{thm:M.2} that
the $\FntMsrSp$-valued process $(W^{(0)}_t)_{t\geq 0}$ converges to $\NullMsr$ in probability when $t \uparrow \infty$.
It is also clear from \eqref{eq:K.51}, Lemma \ref{thm:M.1} and monotonicity that the following statement holds.
\begin{statement} \label{eq:M.35}
The $\FntMsrSp$-valued process
$(Z^{(-t,0]}_0)_{t>0}$
converges to $Z^{(-\infty,0]}_0$ almost surely as $t\uparrow \infty$.
\end{statement}
So by the continuous mapping theorem, we have
\begin{equation}
	\frac{F(W^{(0)}_t+Z^{(-t,0]}_0)}{ W^{(0)}_t(\phi) +Z^{(-t, 0]}_0(\phi)}
	\xrightarrow[t\to\infty]{}
Z_0^{(-\infty, 0]}(\phi)^{-1} F(Z^{(-\infty,0]}_0)
\end{equation}
in probability.
Notice also that the family of random variables
\begin{equation}
	\Big\{\Big|\frac{F(W^{(0)}_t+Z^{(-t,0]}_0)}{W^{(0)}_t(\phi) +Z^{(-t, 0]}_0(\phi)}\Big|:t\geq 1\Big\}
\end{equation}
is dominated by the random variable $Z^{(-1,0]}_0(\phi)^{-1}\cdot \sup_{\eta\in \FntMsrSp}|F(\eta)|$ which is $\ProbSpnDec$-integrable by \eqref{eq:K.8}.
Now we can apply the dominated convergence theorem in \eqref{eq:M.3} and get that
\begin{equation}\lim_{t\to\infty}e^{-\lambda t}\ProbSupProc_\nu[\Ind_{\{X_t\neq \NullMsr\}} F(X_t)   ]
=\ProbSpnDec[Z^{(-\infty, 0]}_0(\phi)^{-1}F(Z^{(-\infty,0]}_0)].
\end{equation}
Thus from Theorem \ref{thm:Y} and Proposition \ref{thm:K},
\begin{align}
&\int_{\FntMsrSp}F(\eta)\bfQ_{\infty,0}(\Diff\eta)
=\lim_{t\to\infty}\mathrm P_\nu [ F(X_t) | X_t\neq \NullMsr ]
\\&=\frac{\lim_{t\rightarrow\infty}e^{-\lambda t}\mathrm P_\nu[ \Ind_{\{X_t\neq \NullMsr\}} F(X_t) ]
}{\lim_{t\rightarrow\infty}e^{-\lambda t}\mathrm P_\nu(X_t\neq \NullMsr)}
=\mathcal K^{-1}\ProbSpnDec[Z^{(-\infty, 0]}_0(\phi)^{-1}F(Z^{(-\infty,0]}_0)]
\\\label{eq:M.4}&=\mathcal K^{-1}\int_{\FntMsrSp}\eta(\phi)^{-1}F(\eta)
\widehat{\bfQ}(\Diff\eta).
\end{align}
Since $F$ is arbitrary,
we can replace $F(\eta)$ in \eqref{eq:M.4} by $F(\eta)\cdot (\eta(\phi)\wedge n)$ where $n$ is an arbitrary
natural number.
Taking $n\uparrow \infty$ and using the monotone convergence theorem, we then arrive at
\begin{equation} \label{eq:M.41}
	\int_{\FntMsrSp}F(\eta) \cdot \eta(\phi)\bfQ_{\infty,0}(\Diff\eta)
	=\mathcal K^{-1}\int_{\FntMsrSp} F(\eta)
 \widehat{\bfQ}(\Diff\eta),
\quad F\in \BnddPos C(\FntMsrSp)
\end{equation}
which implies the desired result in this step.

\emph{Step 2.}
In this step, we will prove Proposition \ref{thm:M} in the case $\mathcal K = 0$.
According to Lemma \ref{thm:M.3}, there exists $t_0>0$ such that for any $t\geq t_0$ and $x\in E$, $2\nu(v_t)\phi(x) \geq v_t(x)$.
Now for any $t\geq t_0$,
we have from \eqref{eq:M.25} and \eqref{eq:Y.1} that
\begin{align}
	&\int_{\FntMsrSp}\eta(2\nu(v_t)\phi)\bfQ_{\infty,0}(\Diff\eta)
	\geq \int_{\FntMsrSp}\eta(v_t)\bfQ_{\infty,0}(\Diff\eta)
	\geq \int_{\FntMsrSp}(1 - e^{-\eta(v_t)})\bfQ_{\infty,0}(\Diff\eta)
      \\&= \int_{\FntMsrSp}\ProbSupProc_\eta(X_t \neq \NullMsr)\bfQ_{\infty,0}(\Diff\eta)
      = e^{\lambda t}.
\end{align}
From \eqref{eq:M.25}, Lemma \ref{thm:M.2} and Proposition \ref{thm:K} we have that
\begin{align}
	e^{-\lambda t}\nu(v_t)
	= e^{-\lambda t} \ProbSupProc_\nu(X_t\neq \NullMsr) \cdot \frac{-\log(1-\ProbSupProc_\nu(X_t\neq \NullMsr))}{\ProbSupProc_\nu(X_t\neq \NullMsr)}
	\xrightarrow[t\to\infty]{} \mathcal K \nu(\phi) = 0.
\end{align}
Now we have that
\[
	\int_{\FntMsrSp}\eta(\phi)\bfQ_{\infty,0}(\Diff\eta)
	\geq e^{\lambda t}/ (2\nu(v_t))
	\xrightarrow[t\to\infty]{} \infty
\]
which implies the desired result for this step.
\end{proof}

\begin{remark}
	\label{thm:M.4}
	From \eqref{eq:M.41} and \cite{MR1727226}*{Corollary 1.4}, it is clear that
 $\mathbf Q_{\infty,\infty} = \widehat{\mathbf Q}$ when $\mathcal K > 0$.
\end{remark}

\subsection{Proof of Proposition \ref{thm:T}} \label{sec:T}

Using the spine decomposition theorem (Lemma \ref{thm:K.6}), we can get the
following lemma.

\begin{lemma} \label{thm:T.1}
	If $\mathcal E<\infty$, then $\mathbf Q^\nu_{t,\infty}$ converges weakly to $\mathbf Q_{\infty,\infty}$ as $t\to\infty$.
\end{lemma}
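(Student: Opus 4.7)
The plan is to express $\mathbf Q^\nu_{t,\infty}[F]$ through the spine decomposition, perform a time shift, and then pass to the limit using the almost sure convergence of $Z^{(-t,0]}_0$ already exploited in Propositions \ref{thm:K} and \ref{thm:M}.

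First, because $\nu(\phi) = 1$, for any $F \in \Bndd C(\FntMsrSp)$ the definition of $\mathbf Q^\nu_{t,\infty}$ combined with Lemma \ref{thm:Q.1} gives
\begin{equation}
\mathbf Q^\nu_{t,\infty}[F]
= \ProbSupProc_\nu\big[e^{-\lambda t}X_t(\phi) F(X_t)\big]
= \ProbQProc^{(\infty)}_\nu[F(X_t)].
\end{equation}
Applying the spine decomposition theorem (Lemma \ref{thm:K.6}) rewrites this as $\ProbSpnDec[F(W^{(0)}_t + Z^{(0,t]}_t)]$. By the time-translation argument in Step 1 of the proof of Proposition \ref{thm:K}, $Z^{(0,t]}_t$ and $Z^{(-t,0]}_0$ share the same law; since $W^{(0)}$ is independent of the entire spine structure by \eqref{eq:K.45}, the joint distribution of $(W^{(0)}_t, Z^{(0,t]}_t)$ agrees with that of $(W^{(0)}_t, Z^{(-t,0]}_0)$, so
\begin{equation}
\mathbf Q^\nu_{t,\infty}[F] = \ProbSpnDec\big[F(W^{(0)}_t + Z^{(-t,0]}_0)\big].
\end{equation}

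Next, I would verify that $W^{(0)}_t + Z^{(-t,0]}_0 \to Z^{(-\infty,0]}_0$ as $t \to \infty$ in probability with respect to the weak topology of $\FntMsrSp$. The immigration piece is handled by statement \eqref{eq:M.35}: under $\mathcal E < \infty$, Lemma \ref{thm:M.1} ensures $Z^{(-\infty,0]}_0 \in \FntMsrSp$ almost surely, and monotonicity yields the claimed a.s.\ convergence. For the initial-mass piece, the mean formula \eqref{eq:I.2} together with the eigenequation $\nu T_t = e^{\lambda t}\nu$ from Assumption \eqref{asp:H1} give $\ProbSpnDec[W^{(0)}_t(\mathbf 1_E)] = \nu(T_t\mathbf 1_E) = e^{\lambda t}\nu(\mathbf 1_E) \to 0$ by \eqref{asp:15}, so $W^{(0)}_t \to \NullMsr$ in $L^1$ and hence weakly in probability. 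Joint continuity of addition on $\FntMsrSp$ with the weak topology (since $(\mu_n+\eta_n)(f)=\mu_n(f)+\eta_n(f)$ for $f\in C_b(E)$) then delivers the claimed convergence in probability.

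Finally, the bounded convergence theorem yields
\begin{equation}
\mathbf Q^\nu_{t,\infty}[F] \xrightarrow[t\to\infty]{} \ProbSpnDec\big[F(Z^{(-\infty,0]}_0)\big]
\end{equation}
for every $F \in \Bndd C(\FntMsrSp)$. The right-hand side is $\widehat{\mathbf Q}(F)$ by definition of $\widehat{\mathbf Q}$, and by Remark \ref{thm:M.4} this equals $\mathbf Q_{\infty,\infty}(F)$, giving the weak convergence $\mathbf Q^\nu_{t,\infty} \Rightarrow \mathbf Q_{\infty,\infty}$.

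The main obstacle is bookkeeping rather than substance: correctly identifying the joint law of $(W^{(0)}_t, Z^{(0,t]}_t)$ with that of $(W^{(0)}_t, Z^{(-t,0]}_0)$ via independence of $W^{(0)}$ from the spine and stationarity of the immigration mechanism, and then stringing together the different modes of convergence (a.s.\ for the spine piece, $L^1$ for the initial-mass piece, continuous mapping for the sum, bounded convergence for the expectation). All analytic content is already in Lemmas \ref{thm:K.6} and \ref{thm:M.1} together with Remark \ref{thm:M.4}.
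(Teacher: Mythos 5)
Your argument is correct and follows essentially the same route as the paper: spine decomposition (Lemma \ref{thm:K.6}), the stationarity shift from Step 1 of the proof of Proposition \ref{thm:K}, the almost sure convergence \eqref{eq:M.35}, the vanishing of $W^{(0)}_t$, and the identification $\widehat{\mathbf Q}=\mathbf Q_{\infty,\infty}$ from Remark \ref{thm:M.4}. The only cosmetic difference is that the paper tests against Laplace functionals $e^{-\eta(f)}$, so that independence factorizes the expectation into $\ProbSupProc_\nu[e^{-X_t(f)}]\,\ProbSpnDec[e^{-Z_0^{(-t,0]}(f)}]$ and the conclusion follows from \cite{MR2760602}*{Theorem 1.18}, whereas you test against general $F\in\Bndd C(\FntMsrSp)$ via convergence in probability in $\FntMsrSp$ and the continuous mapping theorem.
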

\begin{proof}
	Using Lemmas \ref{thm:Q.1}, \ref{thm:K.6}, Step 1 of the proof of
Proposition \ref{thm:K},
Lemma \ref{thm:M.2}, \eqref{eq:M.35} and Remark \ref{thm:M.4}, we can verify that for any	$f\in \BnddPos C(E)$,
\begin{align}
&	 \int_\FntMsrSp e^{-\eta(f)} \mathbf Q^\nu_{t,\infty}(\Diff \eta)
= \ProbQProc^{(\infty)}_\nu[e^{-X_t(f)}]
= \mathrm Q[e^{-W_t^{(0)}(f)-Z_t^{(0,t]}(f)}]
\\&= \ProbSupProc_\nu[e^{-X_t(f)}]\mathrm Q[e^{-Z_0^{(-t,0]}(f)}]
\xrightarrow[t\to\infty]{} \mathrm Q[e^{-Z_0^{(-\infty,0]}(f)}]
= \int_\FntMsrSp e^{-\eta(f)} \mathbf Q_{\infty,\infty}(\Diff \eta).
\end{align}
Now the desired result follows from \cite{MR2760602}*{Theorem 1.18}.
\end{proof}
The above lemma  is a special case of Proposition \ref{thm:T}.
For the general case,
we will use the spine decomposition theorem
for superprocesses
with arbitrary initial value $\mu \in \FntMsrSp^o$.
Now the corresponding spine process will not be stationary, and can not be extended into a two-sided process in general.
Therefore, in this subsection, we construct the random elements
$(W^{(0)},\Spn, \mathcal N, (s_k,y_k,W^{(k)})_{k=1}^\infty)$ in a different probability space with respect to a new probability measure $\mathrm Q_\mu$, under which statements \eqref{eq:T.41}, \eqref{eq:K.42} (with $\RealLine$ replaced by $\RealLine_+$), \eqref{eq:K.43}, \eqref{eq:K.44} (with $\RealLine$ replaced by $\RealLine_+$) and \eqref{eq:K.45} (with $\mathrm P_\nu$ replaced by $\mathrm P_\mu$) hold.
\begin{statement} \label{eq:T.41}
$\Spn = (\Spn_t)_{t\geq 0}$
is an $E$-valued right continuous Markov process with transition semigroup $(S_t)_{t\geq 0}$ and initial distribution $\tilde \mu$.
\end{statement}
We present the spine decomposition for
superprocesses with arbitrary initial value in the following lemma.
We refer our readers to \cite{MR4058118} for its proof.
For any $0\leq a < b \leq t<\infty$,
and $f\in \BnddPos\Borel(E)$, let the random variable $Z_t^{(a,b]}(f)$ be defined as in \eqref{eq:K.51}.

\begin{lemma} \label{thm:T.5}
	Let $\mu\in \FntMsrSp^o$ be arbitrary.
	The $\FntMsrSp$-valued process
	$(W^{(0)}_t+ Z_t^{(0,t]})_{t\geq 0}$
under $\ProbSpnDec_\mu$ has
the same finite-dimensional distributions as the coordinate process
	$(X_t)_{t\geq 0}$
under $\ProbQProc^{(\infty)}_\mu.$
	Moreover, for any $t>0$, the $\FntMsrSp$-valued process $(Z_s^{(0,s]})_{s\in [0,t]}$
under $\ProbSpnDec_\mu$ has
	the same finite-dimensional distributions as the coordinate process $(X_s)_{s\in [0,t]}$
	under $\widetilde{\mu\NMsr}^{(t)}.$
\end{lemma}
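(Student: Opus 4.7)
The plan is to match Laplace functionals, using as in the proof of Proposition \ref{thm:Y.2} that an $\FntMsrSp$-valued process is determined by the values $\mathrm E[e^{-G(X)}]$ as $G$ ranges over the finite-dimensional $[0,\infty]$-valued linear functionals of the form \eqref{eq:Y.2}. A useful first observation is that the first assertion reduces to the second. Fix such a $G$ with time arguments in $[0,s]$ and let $v_G\in\BnddPos\Borel(E)$ be the Laplace exponent with $\mathrm P_\mu[e^{-G(X)}]=e^{-\mu(v_G)}$, obtained by iterating \eqref{eq:I.13} via the Markov property. Lemma \ref{thm:Q.1} gives
\[
\ProbQProc^{(\infty)}_\mu[e^{-G(X)}]=\frac{e^{-\lambda s}}{\mu(\phi)}\mathrm P_\mu\big[X_s(\phi)e^{-G(X)}\big].
\]
A Palm/Mecke calculation based on the Poisson-superposition representation of Lemma \ref{thm:K.1}(3) and on the additivity of $G$ over the Poisson atoms yields $\mathrm P_\mu[X_s(\phi)e^{-G(X)}]=(\mu\NMsr)[X_s(\phi)e^{-G(X)}]\cdot e^{-\mu(v_G)}$, so after unfolding the definition of $\widetilde{\mu\NMsr}^{(s)}$,
\[
\ProbQProc^{(\infty)}_\mu[e^{-G(X)}]=e^{-\mu(v_G)}\,\widetilde{\mu\NMsr}^{(s)}[e^{-G(X)}].
\]
On the spine side, the independence in \eqref{eq:K.45} together with the additivity of $G$ gives $\ProbSpnDec_\mu[e^{-G((W^{(0)}_t+Z^{(0,t]}_t)_{t\ge0})}]=e^{-\mu(v_G)}\,\ProbSpnDec_\mu[e^{-G((Z^{(0,t]}_t)_{t\ge0})}]$. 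Hence the first assertion reduces to $\widetilde{\mu\NMsr}^{(s)}[e^{-G(X)}]=\ProbSpnDec_\mu[e^{-G((Z^{(0,t]}_t)_{t\ge0})}]$, which is precisely the second assertion.

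For the second assertion I would compute both sides. On the left I would use size-biasing: introduce the perturbation $G^{(\theta)}(X):=G(X)+\theta X_t(\phi)$, differentiate the identity $\mathrm P_\mu[e^{-G^{(\theta)}(X)}]=e^{-\mu(v_{G^{(\theta)}})}$ at $\theta=0$, and show that $h(x):=\partial_\theta v_{G^{(\theta)}}(x)|_{\theta=0}$ satisfies a linear Volterra equation obtained by differentiating \eqref{eq:I.13} in $\theta$. Its driving potential is $\partial_z\psi(x,v_G(x))=-\beta(x)+\Phi(x,v_G(x))$ where $\Phi(x,z):=2\sigma(x)^2 z+\int_{(0,\infty)}u(1-e^{-uz})\pi(x,\mathrm du)$. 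The Feynman--Kac representation of $h$, combined with the Doob $h$-transform identity $S_uf(x)=e^{-\lambda u}\phi(x)^{-1}T_u(\phi f)(x)$ of \eqref{eq:E.17}, produces
\[
\widetilde{\mu\NMsr}^{(t)}[e^{-G(X)}]=\frac{e^{-\lambda t}}{\mu(\phi)}\mu(h)=\mathrm E_{\tilde\mu}\Big[\exp\Big\{-\int_0^t\Phi(\Spn_u,v_{G^u}(\Spn_u))\,\mathrm du\Big\}\Big],
\]
where $v_{G^u}$ is the Laplace exponent associated with the time-$u$ shift of the part of $G$ at times $\ge u$, and $\mathrm E_{\tilde\mu}$ denotes expectation under the spine's law with initial distribution $\tilde\mu$ and transition semigroup $(S_u)_{u\ge0}$.

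On the right, conditioning on $\Spn$ and applying Campbell's theorem to the Poisson random measures in \eqref{eq:K.42}--\eqref{eq:K.44}, together with the Kuznetsov identity $\NMsr_x[1-e^{-G^u(X)}]=v_{G^u}(x)$ and the Laplace identity $\ProbSupProc_{y\delta_x}[e^{-G^u(X)}]=e^{-yv_{G^u}(x)}$, the conditional Laplace functional becomes
\[
\ProbSpnDec_\mu\big[e^{-G((Z^{(0,t]}_t)_{t\ge0})}\big|\Spn\big]=\exp\Big\{-\int_0^t\Phi(\Spn_u,v_{G^u}(\Spn_u))\,\mathrm du\Big\},
\]
which averages to precisely the expression above. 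The crux of the proof, and the step I expect to be the main obstacle, is the Feynman--Kac representation of $h$: it requires carefully linearizing the nonlinear equation \eqref{eq:I.13} and then recognizing that the $-\beta$ part of $\partial_z\psi$ is exactly what is absorbed by the Doob $h$-transform \eqref{eq:E.17}, while the residual $\Phi$ term matches the combined intensity of the continuous and discrete immigration along the spine. Uniqueness for the linear integral equation then yields equality of the two Laplace functionals and completes the proof.
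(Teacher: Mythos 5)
The paper does not prove this lemma itself but defers to \cite{MR4058118}*{Theorem 1.5 \& Corollary 1.6}, and your argument is essentially the proof given there: reduce the first assertion to the second via the Mecke/Palm identity for the Poisson representation of Lemma \ref{thm:K.1}(3) together with the independence of $W^{(0)}$, then identify the size-biased Laplace functional $(\mu\NMsr)[X_t(\phi)e^{-G(X)}]=\mu(h)$ --- obtained by linearizing the cumulant equation and applying Feynman--Kac plus the $h$-transform \eqref{eq:E.17} --- with the Campbell-formula computation along the spine, whose exponent $\Phi(x,z)=2\sigma(x)^2z+\int_{(0,\infty)}u(1-e^{-uz})\pi(x,\Diff u)$ you have matched correctly. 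The outline is sound, and the only remaining work is the step you flag yourself, namely justifying the $\theta$-differentiation and the uniqueness of the linearized Volterra equation for $h$.
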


 For any $t\geq 0, f\in\BnddPos\Borel(E)$ and $x\in E$, define
$
	\mathcal L_t f (x)
	:={\ProbSpnDec}_{\delta_x}[e^{-Z^{(0,t]}_{t}(f)}]$
and
$
      \mathcal L_\infty f (x) :=  \limsup_{t\to \infty}  \mathcal L_tf(x).
$

\begin{lemma} \label{thm:F}
	For any $x\in E$ and
$f\in \BnddPos\Borel(E)$,
$\lim_{t\to\infty}\mathcal L_tf(x) = \tilde \nu(\mathcal L_\infty f)$.
\end{lemma}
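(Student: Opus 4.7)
The plan is to use the spine decomposition of Lemma \ref{thm:T.5} to derive a recursion for $\mathcal L_tf(x)$ in which ``old'' immigration goes extinct while ``fresh'' immigration is governed by the spine semigroup $(S_t)_{t\geq 0}$. Combined with Assumption \eqref{asp:H2}, this will show that the limit exists, is $x$-independent, and matches $\tilde\nu(\mathcal L_\infty f)$; the common value is identified by integrating against $\tilde\nu$ and invoking the two-sided stationary construction of Section \ref{sec:K}.

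For the first step, fix $0<s<t$ and split $Z_t^{(0,t]}=Z_t^{(0,s]}+Z_t^{(s,t]}$. By the Poisson construction of $\mathcal N$ and $(s_k,y_k,W^{(k)})$ these two summands are conditionally independent given the entire spine trajectory; and by the Markov property of $\Spn$ at time $s$ together with the time-homogeneity of the immigration mechanism, the conditional law of $Z_t^{(s,t]}$ given $\Spn_s=y$ coincides with the law of $Z_{t-s}^{(0,t-s]}$ under $\mathrm Q_{\delta_y}$. This yields the recursion
\[
\mathcal L_tf(x)=\mathrm Q_{\delta_x}\bigl[e^{-Z_t^{(0,s]}(f)}\,\mathcal L_{t-s}f(\Spn_s)\bigr].
\]

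Next I would let $t\to\infty$ with $s$ fixed. By the finite-horizon part of Lemma \ref{thm:T.5}, $Z_s^{(0,s]}\in\FntMsrSp$ almost surely, and the branching property identifies $(Z_{s+u}^{(0,s]})_{u\geq 0}$ as a $(\xi,\psi)$-superprocess started from $Z_s^{(0,s]}$, which goes extinct $\mathrm Q_{\delta_x}$-almost surely by Lemma \ref{thm:M.2} together with the absorption of $\NullMsr$. Bounded convergence then yields $\mathcal L_tf(x)-S_s\mathcal L_{t-s}f(x)\to 0$. Combining this with \eqref{eq:E.17} and Assumption \eqref{asp:H2}, which (since $\phi\mathcal L_{t-s}f\in L_1^+(\nu)$) give $S_s\mathcal L_{t-s}f(x)=\tilde\nu(\mathcal L_{t-s}f)\bigl(1+H_s(\phi\mathcal L_{t-s}f)(x)\bigr)$, I obtain
\[
\limsup_{t\to\infty}\bigl|\mathcal L_tf(x)-\tilde\nu(\mathcal L_{t-s}f)\bigr|\leq\sup_{y\in E,\,h\in L_1^+(\nu)}|H_sh(y)|,
\]
whose right-hand side vanishes as $s\to\infty$. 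Comparing two starting points $x,x'$ shows $\mathcal L_tf(x)-\mathcal L_tf(x')\to 0$ as $t\to\infty$, so both $\mathcal L_\infty f$ and $\liminf_t\mathcal L_tf$ must be constant functions on $E$.

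Finally, I would identify the common constant by integrating against $\tilde\nu$. Since $\mathrm Q_\nu$ and the measure $\mathrm Q$ of Section \ref{sec:K} agree on the joint law of the forward spine and its forward immigrations (both spines start at the invariant measure $\tilde\nu$), one has $\tilde\nu(\mathcal L_tf)=\mathrm Q[e^{-Z_t^{(0,t]}(f)}]$. The shift-invariance of Step~1 of the proof of Proposition \ref{thm:K}, together with the monotone convergence $Z_0^{(-t,0]}\uparrow Z_0^{(-\infty,0]}$, then gives $\tilde\nu(\mathcal L_tf)=\mathrm Q[e^{-Z_0^{(-t,0]}(f)}]\to \mathrm Q[e^{-Z_0^{(-\infty,0]}(f)}]=:L$. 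Combined with the $x$-independence established above this forces $\lim_t\mathcal L_tf(x)=L$ for every $x$, whence $\mathcal L_\infty f\equiv L$ and $\tilde\nu(\mathcal L_\infty f)=L$. The main obstacle is the bookkeeping in the first step: cleanly setting up the conditional independence of $Z_t^{(0,s]}$ and $Z_t^{(s,t]}$ under $\mathrm Q_{\delta_x}$ and verifying the a.s.\ extinction $Z_t^{(0,s]}(f)\to 0$ from the finiteness of $Z_s^{(0,s]}$; once the recursion is in hand, the remaining steps are a routine combination of the quantitative ergodicity from \eqref{asp:H2} with the stationary two-sided construction.
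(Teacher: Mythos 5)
Your proof is correct, and it takes a genuinely different route from the paper's.

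The paper's argument never leaves the one-sided spine $\mathrm Q_{\delta_x}$ of Section~\ref{sec:T}. It proceeds in three steps: (i) the inequality $\mathcal L_{t+s}f \le S_t\mathcal L_s f$ together with \eqref{asp:H2} gives the upper bound $\mathcal L_\infty f(x)\le\tilde\nu(\mathcal L_\infty f)$; (ii) a set-decomposition argument (splitting $E$ into the regions where $\mathcal L_tf$ overshoots or undershoots $\tilde\nu(\mathcal L_\infty f)$) upgrades this to convergence of $\mathcal L_tf$ to the constant $\tilde\nu(\mathcal L_\infty f)$ in $\tilde\nu$-probability; and (iii) a one-sided lower bound $\mathcal L_{t+s}f(x)\ge \mathrm Q_{\delta_x}[\mathcal L_sf(\Spn_t)\cdot\mathrm Q_{\delta_x}(Z^{(0,t]}_{t+s}=\NullMsr\,|\,\Spn_t)]$, letting $s\to\infty$ with $t$ fixed, yields the matching lower bound $\liminf_t\mathcal L_tf(x)\ge\tilde\nu(\mathcal L_\infty f)$. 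You instead work with the exact recursion $\mathcal L_tf(x)=\mathrm Q_{\delta_x}[e^{-Z^{(0,s]}_t(f)}\mathcal L_{t-s}f(\Spn_s)]$, send $t\to\infty$ first (using extinction of the finite-horizon immigration to replace the prefactor by $1$), then invoke \eqref{asp:H2} uniformly in the test function to get $\limsup_t|\mathcal L_tf(x)-\tilde\nu(\mathcal L_{t-s}f)|\le\sup|H_s|\to0$, and finally pin down the common constant by matching $\tilde\nu(\mathcal L_tf)$ with the two-sided stationary construction $\mathrm Q$ of Section~\ref{sec:K} and using the shift-invariance plus monotone convergence of $Z_0^{(-t,0]}$. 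Both routes are sound; yours is based on an identity rather than two one-sided bounds, avoids the intermediate convergence-in-probability step, and directly produces the value of the limit as $\mathrm Q[e^{-Z_0^{(-\infty,0]}(f)}]$ (which also recovers Lemma~\ref{thm:T.1} as a byproduct). The trade-off is that you must import the two-sided spine from Section~\ref{sec:K} and justify that its forward restriction agrees with $\mathrm Q_\nu$, whereas the paper's proof is self-contained within the one-sided framework of Section~\ref{sec:T}. Your argument handles the case $\mathcal K=0$ correctly as well, since $\mathrm Q[e^{-Z_0^{(-\infty,0]}(f)}]$ is still well-defined (possibly zero) and the monotone-convergence step goes through without needing $Z_0^{(-\infty,0]}$ to be a finite measure.
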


\begin{proof}
\emph{Step 1.}
We will show that for any $f\in \BnddPos\Borel(E)$ and $x\in E$, $\mathcal L_\infty f(x) \leq \tilde \nu(\mathcal L_\infty f)$.
To this end,
let us take arbitrary
$f\in \BnddPos\Borel(E)$,
$s,t\in \RealLine_+$ and $x\in E$, and
note that
\begin{equation}
 \mathcal L_{t+s}f(x)
 =\ProbSpnDec_{\delta_x}[e^{-Z^{(0,t+s]}_{t+s}(f)}]
 \leq \ProbSpnDec_{\delta_x}[e^{-Z^{(t,t+s]}_{t+s}(f)}]
  = \ProbSpnDec_{\delta_x}\big[\ProbSpnDec_{\delta_{\Spn_t}}[e^{-Z^{(0,s]}_{s}(f)}] \big]
 = S_t \mathcal L_s f (x).
\end{equation}
By \eqref{asp:H2},
\begin{equation}
	\mathcal L_{t+s}f(x)\leq S_t\mathcal L_{s}f(x)
= \phi(x)^{-1}e^{-\lambda t} T_t (\phi  \mathcal L_s f) (x)
= \big(1+ H_t(\phi \mathcal L_s f)(x)\big)\tilde \nu(\mathcal L_s f).	
\end{equation}
Noticing that, from \eqref{asp:H2}, $\lim_{t\to\infty}\sup_{x\in E, g\in \Bndd\Borel(E)}|H_tg(x)| = 0$.
 Therefore, letting $t\to\infty$,
 we have
 \begin{equation} \label{eq:F.1}
	\mathcal L_{\infty}f(x) \leq \tilde \nu(\mathcal L_s f). \end{equation}
	Now, taking $\limsup_{s\to\infty}$ in
\eqref{eq:F.1},
	using the reverse Fatou's lemma, we get the desired result for this step.

\emph{Step 2.}
We will show that for any $f\in \BnddPos\Borel(E)$, the Borel function $\mathcal L_tf$ on $E$ converges to the constant $\tilde \nu(\mathcal L_\infty f)$ in probability as $t\to \infty$
under $\tilde \nu$.
First note that, if $\tilde\nu(\mathcal L_\infty f) = 0$ then this is trivial from Step 1.
So let us assume that $\tilde\nu(\mathcal L_\infty f) > 0$ for the rest of this step.
Take arbitrary $\varepsilon_1, \varepsilon_2\in (0, 1)$, $t\geq 0$, and define
\begin{align}
&U_t := U_t^{\varepsilon_1}: =\{x\in E: \mathcal L_t f(x) >(1+\varepsilon_1) \tilde \nu(\mathcal L_\infty f)\};
\\&L_t:=L_t^{\varepsilon_2} : =\{x\in E: \mathcal L_t f(x) <(1-\varepsilon_2) \tilde \nu(\mathcal L_\infty f)\}.
\end{align}
From the reverse Fatou's lemma and Step 1, we have
\[
	\limsup_{t\to\infty}\tilde \nu(U_t) \leq \tilde \nu(\limsup_{t\to\infty} \Ind_{U_t}) = \tilde\nu(\cap_{t_0\geq 0}\cup_{t\geq t_0}U_t) = \tilde \nu(\emptyset) =0.
\]
Now we only need to show that  $\lim_{t\to\infty} \tilde\nu(L_t)=0.$
From \eqref{eq:F.1}, and the fact that the function $\mathcal L_tf$ is bounded by $1$, we have
\begin{align}
&\tilde\nu(\mathcal L_\infty f)
\leq \tilde \nu(\mathcal L_tf)
= \tilde \nu(\mathcal L_t f \cdot \Ind_{L_t}) + \tilde \nu(\mathcal L_t f \cdot \Ind_{U_t})+ \tilde \nu(\mathcal L_t f \cdot \Ind_{(L_t\cup U_t)^c})
\\& \leq (1-\varepsilon_2) \tilde \nu(\mathcal L_\infty f) \tilde \nu(L_t) + \tilde \nu(U_t) +(1+\varepsilon_1)\tilde \nu(\mathcal L_\infty f)\big(1-\tilde \nu(L_t)-\tilde \nu(U_t)\big)\\
  &\leq (1+\varepsilon_1)\tilde \nu(\mathcal L_\infty f)-(\varepsilon_1+\varepsilon_2) \tilde \nu(\mathcal L_\infty f)\tilde \nu(L_t)+ \tilde \nu(U_t).
\end{align}
Taking $\liminf_{t\to\infty}$,
we have
\begin{equation}
\tilde\nu(\mathcal L_\infty f)
\leq (1+\varepsilon_1)\tilde\nu(\mathcal L_\infty f)-(\varepsilon_1+\varepsilon_2)\tilde\nu(\mathcal L_\infty f)
\limsup_{t\to\infty} \tilde \nu(L_t^{\varepsilon_2}).
\end{equation}
Letting $\varepsilon_1\to 0$, we have
\begin{equation}
\tilde\nu(\mathcal L_\infty f)
\leq \tilde\nu(\mathcal L_\infty f)-\varepsilon_2\tilde\nu(\mathcal L_\infty f)\limsup_{t\to\infty}
\tilde \nu(L_t^{\varepsilon_2}).
\end{equation}
This is impossible unless
$\limsup_{t\to\infty} \tilde\nu(L^{\varepsilon_2}_t)=0$ holds as desired.

\emph{Step 3.}
We will show that for any $f\in \BnddPos\Borel(E)$ and $x\in E$, $\liminf_{t\to\infty}\mathcal L_t f(x) \geq \tilde \nu(\mathcal L_\infty f)$.
To this end, we fix arbitrary $f\in \BnddPos\Borel(E)$ and $x\in E$, and note that for any $t,s> 0$,
\begin{align}
	&\mathcal L_{t+s}f(x) \geq \ProbSpnDec_{\delta_x}\big[\exp\big\{-Z^{(t,t+s]}_{t+s}(f)\big\}; Z^{(0,t]}_{t+s} = \NullMsr\big]
      \\&= \ProbSpnDec_{\delta_x} \Big[\ProbSpnDec_{\delta_x}\big[\exp\big\{-Z^{(t,t+s]}_{t+s}(f)\big\}\big|\Spn_t\big] \cdot \ProbSpnDec_{\delta_x}\big(Z^{(0,t]}_{t+s} = \NullMsr\big |\Spn_t\big) \Big]
  \\&\label{eq:F.2}    = \ProbSpnDec_{\delta_x} \big[ \mathcal L_{s}f(\Spn_t)\cdot \ProbSpnDec_{\delta_x}\big(Z^{(0,t]}_{t+s} = \NullMsr\big |\Spn_t\big) \big].
\end{align}
On one hand, it can be verified that for any  $t>0$, the $\FntMsrSp$-valued process $(Z^{(0,t]}_{t+s})_{s\geq 0}$ under $\mathrm Q_{\delta_x}(\cdot|\tilde \xi_t)$ has the same finite-dimensional distributions as the coordinate process $(X_s)_{s\geq 0}$ under $\mathrm Q_{\delta_x}[\ProbSupProc_{Z^{(0,t]}_t}(\cdot)|\tilde \xi_t].$
In particular, due to Lemma \ref{thm:M.2} and the bounded convergence theorem, for each $t>0$, it holds that
$$
\ProbSpnDec_{\delta_x}\big(Z^{(0,t]}_{t+s} = \NullMsr\big |\Spn_t\big)
=\mathrm Q_{\delta_x}[\ProbSupProc_{Z^{(0,t]}_t}(X_s=\NullMsr)|\Spn_t]
\to 1\quad \mbox{as } s\to\infty.
$$
On the other hand, for any $\varepsilon > 0$,
defining
a function $F_{s,\varepsilon} \in \BnddPos\Borel(E)$ by
\[
F_{s,\varepsilon}(y):= \Ind_{\{|\mathcal L_sf(y) - \tilde \nu(\mathcal L_\infty f)|>\varepsilon\}}, \quad y\in E,
\]
we can verify from \eqref{eq:T.41}, \eqref{asp:H2} and Step 2 that, for any $t>0$,
\begin{align}
 &\ProbSpnDec_{\delta_x}\big(|\mathcal L_sf(\Spn_t)-\tilde \nu(\mathcal L_\infty f)|>\varepsilon\big)
 = S_t F_{s,\varepsilon}(x)
 = \phi(x)^{-1}e^{-\lambda t}T_t (\phi \cdot F_{s,\varepsilon} ) (x)
 \\&= \tilde \nu ( F_{s,\varepsilon} ) \big(1+(H_t F_{s,\varepsilon})(x) \big)
 \leq \tilde \nu ( F_{s,\varepsilon} ) \big(1+\sup_{g\in L_1^+(\nu),x\in E} |H_t g(x)| \big)
 \xrightarrow[s\to \infty]{} 0.
\end{align}
Therefore, for any $t>0$,
$\mathcal L_sf(\Spn_t)\cdot\ProbSpnDec_{\delta_x}\big(Z^{(0,t]}_{t+s} = \NullMsr\big |\Spn_t\big)$
converges to $\tilde \nu(\mathcal L_\infty f)$ in probability with respect to $\mathrm Q_{\delta_x}$ as $s\to\infty$.
By taking limit inferior in \eqref{eq:F.2} as $s\to \infty$ and using the
bounded convergence theorem, we get the desired result for this step.

\emph{Final Step.} Combine the results in Steps 1 and 3.
\end{proof}

We are now ready to give the proof of Proposition \ref{thm:T}.

\begin{proof}[Proof of Proposition \ref{thm:T}
in the case $\mathcal K > 0$]
	\emph{Step 1.}
	Let $f\in \BnddPos\Borel(E)$ and $t\geq 0$ be arbitrary.
	We will show that $\mathrm Q_\mu[e^{-Z_t^{(0,t]}(f)}] =  \tilde \mu(\mathcal L_tf).$
	To do this, we note that by Lemma \ref{thm:T.5},
	\[
		\mathcal L_tf(x)
		= \widetilde{\delta_x \NMsr}^{(t)}[e^{-X_t(f)}]
		= \NMsr_x\Big[e^{-X_t(f)} \frac{X_t(\phi)}{e^{\lambda t} \phi(x)}\Big], \quad x\in E.
	\]
	Therefore, again by Lemma \ref{thm:T.5} we have
	\begin{align}
		&\mathrm Q_\mu[e^{-Z_t^{(0,t]}(f)}]
		= \widetilde{\mu \NMsr}^{(t)}[e^{-X_t(f)}]
         = \int_E \NMsr_{x}\Big[e^{-X_t(f)}\frac{X_t(\phi)}{e^{\lambda t}\mu(\phi)}\Big]\mu(\Diff x)
	      \\&= \mu(\phi)^{-1}\int_E (\mathcal L_tf)(x)\cdot \phi(x)\mu(\Diff x)
	      =  \tilde \mu(\mathcal L_tf).
	\end{align}

\emph{Step 2.} We will show that for any
$\mu\in \FntMsrSp^o$
and $f\in \BnddPos\Borel(E)$,
\[
\lim_{t\to\infty}\int_\FntMsrSp e^{-\eta(f)}
\mathbf Q^\mu_{t,\infty}(\Diff \eta)
= \tilde \nu(\mathcal L_\infty f).
\]
In fact, from Lemmas \ref{thm:Q.1}, \ref{thm:T.5} and Step 1, we have that
\begin{equation}
	\int_\FntMsrSp e^{-\eta(f)}
\mathbf Q^\mu_{t,\infty}(\Diff \eta)
	= \ProbQProc^{(\infty)}_\mu[e^{-X_t(f)}]
	= \mathrm Q_\mu [e^{- W^{(0)}_t(f) - Z^{(0,t]}_t(f)}]
      = \ProbSupProc_\mu[e^{-X_t(f)}] \tilde{\mu}(\mathcal L_tf),
      \quad t\geq 0.
\end{equation}
Now the desired result in this step follows from Lemmas \ref{thm:M.2}, \ref{thm:F} and the bounded convergence theorem.

\emph{Final Step.}
From Lemma \ref{thm:T.1}  for any $f\in \BnddPos C(E)$,
\begin{align}
	\lim_{t\to\infty} \int_\FntMsrSp e^{-\eta(f)}
\mathbf Q^\nu_{t,\infty}(\Diff \eta)
= \int_\FntMsrSp e^{-\eta(f)} \mathbf Q_{\infty,\infty}(\Diff \eta).
\end{align}
Combining this with Step 2 and \cite{MR2760602}*{Theorem 1.18},
we get the desired result.
\end{proof}

\begin{proof}[Proof of Proposition \ref{thm:T}
in the case $\mathcal K = 0$]
We give a proof by contradiction. Assume that there exists $\mu\in \FntMsrSp^o$ such that $\mathbf Q^\mu_{t,\infty}$ converges weakly to a probability measure, say $\mathbf Q^*$ on $\mathcal M$.
	Then we have that
\[
	\int_\FntMsrSp e^{-\eta(\phi)}\mathbf Q^\mu_{t,\infty}(\Diff \eta)
	\geq \int_\FntMsrSp e^{-\eta(\|\phi\|_\infty \Ind_E)}\mathbf Q^\mu_{t,\infty}(\Diff \eta)
	\xrightarrow[t\to\infty]{} \int_\FntMsrSp e^{-\eta(\|\phi\|_\infty\Ind_E)}  \mathbf Q^*(\Diff \eta) > 0.
\]
On the other hand, from $xe^{-x} \leq 1$ for every $x\geq 0$, and Proposition \ref{thm:K}, we have
\[
	 \int_\FntMsrSp e^{-\eta(\phi)}\mathbf Q^\mu_{t,\infty}(\Diff \eta)
	\leq \int_\FntMsrSp \eta(\phi)^{-1}\mathbf Q^\mu_{t,\infty}(\Diff \eta)
	= e^{-\lambda t} \mu(\phi)^{-1}\ProbSupProc_\mu(X_t\neq \NullMsr)
	\xrightarrow[t\to\infty]{}
	0
\]
which is a contradiction.
\end{proof}

\subsection{Proof of Proposition \ref{thm:R}} \label{sec:R}

\begin{proof}[Proof of Proposition \ref{thm:R}
in the case $\mathcal K>0$]
	From \eqref{eq:Y.1} and \eqref{eq:Y.15} we have
	\begin{align}
		&\bfQ_{\infty,r}[F]
		=\int_{\FntMsrSp^o} e^{-\lambda r}\ProbSupProc_\eta[F(X_0)\Ind_{\{X_r\neq \NullMsr\}}] \bfQ_{\infty,0}(\Diff \eta)\\\label{eq:R.1}
        &=\int_{\FntMsrSp^o}
         F(\eta)
        e^{-\lambda r}\ProbSupProc_\eta(X_r\neq\NullMsr) \bfQ_{\infty,0}(\Diff \eta),
	\quad F\in \BnddPos\Borel(\FntMsrSp), r\geq 0.
\end{align}
Note that by Proposition \ref{thm:M},
$\eta(\phi)\bfQ_{\infty,0}(\Diff\eta)$ is a finite measure concentrated on $\FntMsrSp^o$, and that
by Lemma \ref{thm:Q.3} and Proposition \ref{thm:K},
for any $F\in\BnddPos\Borel(\FntMsrSp)$, and $r$ large enough,
\[
	\sup_{\eta\in\FntMsrSp^o}	
    F(\eta)
	e^{-\lambda r}\ProbSupProc_\eta(X_r\neq\NullMsr) \frac{1}{\eta(\phi)}
	\leq \|F\|_\infty e^{-\lambda r}\ProbSupProc_\nu(X_r\neq\NullMsr) \sup_{\eta\in \FntMsrSp^o} \frac{1}{\eta(\phi)} \frac{\ProbSupProc_\eta(X_r\neq\NullMsr)}{\ProbSupProc_\nu(X_r\neq\NullMsr)}
	<\infty.
\]
Now by Proposition \ref{thm:K} and the bounded convergence theorem we have
\begin{align}
		&\lim_{r\to\infty}\bfQ_{\infty,r}[F]
		=\int_{\FntMsrSp^o}
       F(\eta)
		\cdot \mathcal K \eta(\phi) \bfQ_{\infty,0}(\Diff \eta)
		= \bfQ_{\infty,\infty}[F],
		\quad F\in \BnddPos\Borel(\FntMsrSp)
\end{align}
	as desired.
\end{proof}

\begin{proof}[Proof of Proposition \ref{thm:R}
in the case $\mathcal K=0$]
We give a proof by contradiction.
Assume that $\bfQ_{\infty,r}$ converges
strongly to a probability measure, say $\bfQ^*$, as $r\to\infty$.
	Taking $F(\eta):= e^{-\eta(\phi)},\eta\in\FntMsrSp$ we have $\lim_{r\to\infty}\bfQ_{\infty,r}[F] = \bfQ^*[F]>0$.
	On the other hand,  we first observe from $\sup_{x\geq 0}xe^{-x} \leq 1$ that
    $F(\eta)\leq \eta(\phi)^{-1}$
	for every $\eta\in\FntMsrSp$.
	Then, noticing that \eqref{eq:R.1} still holds in this case,
	and also noticing from  Proposition \ref{thm:K} and Lemma \ref{thm:Q.3} that
\[
	\sup_{\eta\in\FntMsrSp^o }
	F(\eta)
	e^{-\lambda r}\ProbSupProc_\eta(X_r\neq\NullMsr)
	\leq e^{-\lambda r}\ProbSupProc_\nu(X_r\neq\NullMsr)\sup_{\eta\in\FntMsrSp^o }\frac{1}{\eta(\phi)}\frac{\ProbSupProc_\eta(X_r\neq\NullMsr)}{\ProbSupProc_\nu(X_r\neq\NullMsr)} \xrightarrow[r\to\infty]{}0.
\]
Using the bounded convergence theorem we have
$\lim_{r\to\infty}\bfQ_{\infty,r}[F] = 0$, which is a contradiction.
\end{proof}

\subsection{Proof of Proposition \ref{prop:new}}

\begin{lemma}\label{rate lemma}
                 If $\mathcal E<\infty$, then
		\[
		\lim_{t\to \infty}\sup_{r\geq 0}\dfrac{e^{\lambda r}\nu(v_t)}{\nu(v_{t+r})}=1.
		\]
\end{lemma}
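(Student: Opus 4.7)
The plan is to reduce the uniform-in-$r$ limit to the scalar asymptotic $e^{-\lambda t}\nu(v_t)\to\mathcal K\nu(\phi)>0$, which follows from the results already in place.

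First I would combine Propositions~\ref{thm:K} and \ref{thm:L}: under $\mathcal E<\infty$ we have $\mathcal K>0$, and for the measure $\nu$ the convergence
\[
e^{-\lambda t}\,\ProbSupProc_\nu(X_t\neq \NullMsr)\xrightarrow[t\to\infty]{}\mathcal K\,\nu(\phi)\in(0,\infty)
\]
holds. Next, I would exploit the identity $\nu(v_t)=-\log\ProbSupProc_\nu(X_t=\NullMsr)=-\log\bigl(1-p_t\bigr)$, where $p_t:=\ProbSupProc_\nu(X_t\neq\NullMsr)$. Since $p_t\to 0$ by Lemma~\ref{thm:M.2}, the elementary estimate $-\log(1-p)/p\to 1$ as $p\to 0$ gives
\[
a_t:=e^{-\lambda t}\nu(v_t)=e^{-\lambda t}p_t\cdot\frac{-\log(1-p_t)}{p_t}\xrightarrow[t\to\infty]{}\mathcal K\,\nu(\phi).
\]
This establishes $a_t\to L$ with $L:=\mathcal K\nu(\phi)>0$.

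The second and main step is the trivial but crucial algebraic observation
\[
\frac{e^{\lambda r}\nu(v_t)}{\nu(v_{t+r})}=\frac{e^{\lambda r}\cdot e^{\lambda t}a_t}{e^{\lambda(t+r)}a_{t+r}}=\frac{a_t}{a_{t+r}},
\]
which turns the supremum over $r\geq 0$ into a statement about ratios of the sequence $(a_s)_{s\geq 0}$. Given $\varepsilon\in(0,1)$, I would pick $T$ such that $|a_s-L|<\varepsilon L/2$ for all $s\geq T$; then for every $t\geq T$ and every $r\geq 0$ both $a_t$ and $a_{t+r}$ lie in $[L(1-\varepsilon/2),L(1+\varepsilon/2)]$, so
\[
\sup_{r\geq 0}\biggl|\frac{a_t}{a_{t+r}}-1\biggr|\leq\frac{\varepsilon}{1-\varepsilon/2},
\]
which yields the claimed uniform convergence to $1$.

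The only delicate point is confirming $L>0$, i.e.\ that the $\LLogL$-type hypothesis $\mathcal E<\infty$ really produces a strictly positive constant in Proposition~\ref{thm:K}; this is exactly Proposition~\ref{thm:L}. Everything else is a soft continuity argument exploiting that uniformity over $r\geq 0$ is automatic once the denominator variable $t+r$ lies in the eventual regime $[T,\infty)$.
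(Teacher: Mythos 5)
Your proof is correct and takes a genuinely different, shorter route than the paper. You reduce everything to the scalar convergence $a_t:=e^{-\lambda t}\nu(v_t)\to\mathcal K\nu(\phi)>0$, which is indeed available from Propositions~\ref{thm:K} and \ref{thm:L} together with the elementary relation $\nu(v_t)=-\log(1-\ProbSupProc_\nu(X_t\neq\NullMsr))$ and Lemma~\ref{thm:M.2}; the identity $e^{\lambda r}\nu(v_t)/\nu(v_{t+r})=a_t/a_{t+r}$ then makes the uniformity over $r\geq 0$ automatic, since $t+r\geq t$ forces both indices into the regime where $a_s$ is near the positive limit. The paper instead proves the lemma by invoking the exponential formula $e^{\lambda r}\nu(v_t)/\nu(v_{t+r})=\exp\{\int_t^{t+r}\nu(\Psi_0 v_s)/\nu(v_s)\,\mathrm ds\}$ from \cite{MR4175472} and then verifying that $\int_t^\infty \nu(\Psi_0 v_s)/\nu(v_s)\,\mathrm ds<\infty$ via a mean-value bound on $\psi_0$, the reference measure $\rho$, and the $L\log L$-type condition $\mathcal E<\infty$. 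The paper's route is more explicit about \emph{why} $\mathcal E<\infty$ enters (it shows up as the finiteness of the tail integral), but it is logically redundant once Propositions~\ref{thm:K} and \ref{thm:L} are in hand --- which they are, since both are proved in earlier subsections without circularity. Your approach buys a considerably shorter proof; the paper's approach buys a quantitative integral representation of $e^{\lambda r}\nu(v_t)/\nu(v_{t+r})$ that is not needed elsewhere in the argument but makes the dependence on $\mathcal E$ transparent.
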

\begin{proof}
	According to \cite{MR4175472}*{(3.10)},
	\begin{equation}
		\dfrac{e^{\lambda r}\nu(v_t)}{\nu(v_{t+r})}=\exp\left\{\int_t^{t+r}\dfrac{\nu(\Psi_0v_s)}{\nu(v_s)}\mathrm ds\right\}, \quad t > 0, r\geq 0,
	\end{equation}
	where
		$\Psi_0v_s(x):=\psi_0(x,v_s(x)), x\in E, s>0,$
	and
		\begin{equation}
			\psi_0(x,\lambda):=\sigma(x)^2\lambda^2+\int_0^\infty\left(e^{-\lambda u}-1+\lambda u\right)\pi(x,\mathrm du), \quad x\in E, \lambda \geq 0.
		\end{equation}
		To prove the desired result, it suffices to show
		\begin{equation} \label{eq:exponential}
			\int_t^{\infty}\dfrac{\nu(\Psi_0v_s)}{\nu(v_s)}\mathrm ds < \infty, \quad \text{~for~some~} t\geq 0.
		\end{equation}
	It is easy to see that for any $x\in E$,
	\[
	\dfrac{\partial \psi_0(x,\lambda)}{\partial \lambda}=2\sigma(x)^2\lambda+\int_0^\infty \left(1-e^{-\lambda u}\right)u\pi(x,\mathrm du)
	\]
	is a nonnegative increasing function with respect to $\lambda$ and $\psi_0(x,0)=0$.
	Thanks to the mean value theorem,
	\[
	\psi_0(x,\lambda)=\psi_0(x,\lambda)-\psi_0(x,0)\leq \lambda \dfrac{\partial \psi_0(x,\lambda)}{\partial \lambda}, \quad x\in E, \lambda \geq 0.
	\]
	Therefore,
	\[
	\Psi_0v_s(x)\leq v_s(x)\left.\dfrac{\partial \psi_0(x,\lambda)}{\partial \lambda}\right|_{\lambda=v_s(x)}, \quad x\in E, s> 0.
	\]
	By Lemma \ref{thm:M.3}, there exists $T_0>0$ such that $v_s(x)\leq 2\phi(x)\nu(v_s)$ for any $x\in E$ and $s>T_0$.  For $s>T_0$,
	\[
	\nu(\Psi_0v_s)\leq 2\nu(v_s)\left\langle\nu, \phi\left.\dfrac{\partial \psi_0(\cdot,\lambda)}{\partial \lambda}\right|_{\lambda=v_s(\cdot)}\right\rangle .
	\]
	Note that for any $x\in E$ and $s>T_0$,
	\begin{align}
		&\phi(x)\left.\dfrac{\partial \psi_0(\cdot,\lambda)}{\partial \lambda}\right|_{\lambda=v_s(x)}
		=2\sigma(x)^2\phi(x)v_s(x)+\int_0^\infty
		\left(1-e^{-v_s(x) u}\right) \phi(x) u\pi(x,\mathrm du)
		\\
		&\leq 2\| \sigma^2\phi \|_\infty v_s(x)+\int_0^\infty
		\left(1-e^{- 2\nu(v_s)\phi(x)u}\right)\phi(x)u\pi(x,\mathrm du).
	\end{align}
		Define a measure $\rho$ on $(0,\infty)$ so that for any non-negative Borel function $f$ on $(0,\infty)$,
		\[
		\int_{(0,\infty)} f(u) \rho(\mathrm du) = \int_E \nu(\mathrm dx) \int_{(0,\infty)} f(\phi(x)u) \pi(x,\mathrm du).
		\]
	Then for any $s> T_0$,
	\[
	\nu(\Psi_0v_s)\leq
	2\nu(v_s)\left[2\| \sigma^2\phi \|_\infty \nu(v_s)+ \int_0^\infty \left(1-e^{- 2\nu(v_s)u}\right)u\rho(\mathrm du)
	\right].
	\]
		Now the integral on the left hand side of \eqref{eq:exponential} can be bounded by
		\[
		\int_t^{\infty}\dfrac{\nu(\Psi_0v_s)}{\nu(v_s)}\mathrm ds
		\leq 4\| \sigma^2\phi \|_\infty \textrm{I}_{t}+2\textrm{II}_{t}, \quad t>T_0, r>0,
		\]
		where
		\[
		\textrm{I}_{t} := \int_t^{\infty} \nu(v_s) \mathrm ds, \text{~and~} \textrm{II}_t := \int_t^\infty \mathrm ds \int_0^\infty \left(1-e^{- 2\nu(v_s)u}\right)u\rho(\mathrm du).
		\]
	From \eqref{eq:M.25} and Propositions \ref{thm:K} and \ref{thm:L}, we know that $e^{-\lambda t} \nu(v_t) \to \mathcal K>0$ as $t\to \infty$.
		In particular,
		there exist $T_1\geq T_0$ and $\C\label{c:temp}>0$ such that $\nu(v_s)\leq \Cr{c:temp}e^{\lambda s}$ for every $s\geq T_1$.
		This yields $\textrm{I}_{t}<\infty$ for $t\geq T_1$.
	Note that
	$\int_0^\infty \left(1-e^{- 2\theta u}\right)u\rho(\mathrm du)$ is an increasing function with respect to $\theta$.
		Thus for sufficiently large $t$ so that $ t \geq T_1$ and $2\Cr{c:temp}e^{\lambda t}\leq 1$,
		\begin{align}
			&\textrm{II}_{t}
			\leq   \int_t^\infty \mathrm ds\int_0^\infty \left(1-e^{- 2\Cr{c:temp}e^{\lambda s}u}\right)u\rho(\mathrm du)
			=   \frac{1}{|\lambda|}\int_0^\infty u\rho(\mathrm du)  \int_{0}^{2\Cr{c:temp}ue^{\lambda t}} \frac{1-e^{- z}}{z}\mathrm d z
			\\&\leq  \dfrac{1}{|\lambda|}\int_0^\infty u\rho(\mathrm du)\int_0^{u} \dfrac{1-e^{-z}}{z}\mathrm ds
			\leq \dfrac{1}{|\lambda|}\int_0^1 u^2\rho(\mathrm du)+\dfrac{1}{|\lambda|}\int_1^\infty u(1+\log u)\rho(\mathrm du).
		\end{align}

		When $\mathcal E<\infty$, it is easy to see that $\textrm{II}_{t}<\infty$.
	The proof is complete.
\end{proof}
	\begin{lemma}\label{lem:bounded}
		Suppose $\mathcal E<\infty$.  Then for any $\mu\in\FntMsrSp^o$,
		\[
		\lim_{t\to \infty}\sup_{r\geq 0}\sup_{\eta\in\FntMsrSp^o}\dfrac{\mu(\phi)e^{\lambda t}\ProbSupProc_{\eta}\left(X_r\neq 0\right)}{\eta(\phi)\ProbSupProc_{\mu}\left(X_{t+r}\neq 0\right)}\leq 16.
		\]
	\end{lemma}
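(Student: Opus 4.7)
The plan is to bound the numerator of the ratio by the linear estimate $1-e^{-x}\le x$, to bound the denominator from below by $1-e^{-x}\ge x/2$ (valid on $[0,1]$), and then to replace every occurrence of $v_s$ by its asymptotic profile $\phi\,\nu(v_s)$ via Lemma \ref{thm:M.3}. The remaining comparison between $\nu(v_r)$ and $\nu(v_{t+r})$ is exactly what Lemma \ref{rate lemma} is designed to deliver, and crucially it produces the factor $e^{-\lambda t}$ that is needed to cancel the $e^{\lambda t}$ appearing on the right-hand side of the claimed bound.

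Concretely, Lemma \ref{thm:M.3} supplies a threshold $t_0$ such that $\tfrac12\phi(x)\nu(v_s)\le v_s(x)\le \tfrac32\phi(x)\nu(v_s)$ for all $s\ge t_0$ and $x\in E$. Lemma \ref{rate lemma}, applied both in its original form and in the symmetric form obtained by interchanging the labels $t$ and $r$, then provides a larger threshold $t_1\ge t_0$ past which $\sup_{s\ge 0}e^{\lambda s}\nu(v_t)/\nu(v_{t+s})\le 2$ for every $t\ge t_1$ and $\sup_{s\ge 0}e^{\lambda s}\nu(v_r)/\nu(v_{r+s})\le 2$ for every $r\ge t_1$. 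I would enlarge $t_1$ once more so that $\tfrac32\mu(\phi)\nu(v_{t_1})\le 1$; combined with the monotonicity of $s\mapsto \nu(v_s)$, this forces $\mu(v_{t+r})\le 1$ for all $t\ge t_1$ and $r\ge 0$.

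On the range $t\ge t_1$ and $r\ge t_1$, Lemma \ref{thm:M.3} applied to both time indices yields $\mathrm P_\mu(X_{t+r}\ne\NullMsr)\ge \tfrac12\mu(v_{t+r})\ge \tfrac14\mu(\phi)\nu(v_{t+r})$ for the denominator and $\mathrm P_\eta(X_r\ne\NullMsr)\le \eta(v_r)\le \tfrac32\eta(\phi)\nu(v_r)$ for the numerator. Applying the swapped form of Lemma \ref{rate lemma} to the latter gives $\mathrm P_\eta(X_r\ne\NullMsr)\le 3\,e^{-\lambda t}\eta(\phi)\nu(v_{t+r})$. Taking the quotient and multiplying by $\mu(\phi)e^{\lambda t}/\eta(\phi)$ produces the numerical bound $12$, which is $\le 16$.

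The main technical hurdle is the transitional range $r\in[0,t_1]$, where $v_r$ is not yet governed by Lemma \ref{thm:M.3}. I would handle it by a Markov-property splitting at time $t_1$: decompose $\{X_r\ne\NullMsr\}=\{X_{t_1}\ne\NullMsr\}\cup(\{X_r\ne\NullMsr\}\cap\{X_{t_1}=\NullMsr\})$ and use the monotone lower bound $\mathrm P_\mu(X_{t+r}\ne\NullMsr)\ge \mathrm P_\mu(X_{t+t_1}\ne\NullMsr)$. The contribution of the first event is controlled exactly as in the main case with $\eta(v_{t_1})$ playing the role of $\eta(v_r)$, and the residual event is where one would combine Proposition \ref{thm:K} with the finite-time profile of $v_s$ on $(0,t_1]$ to obtain a uniform-in-$\eta$ estimate; the slack between the bound $12$ obtained above and the target $16$ is what allows the final constant to absorb the losses produced in this transitional regime.
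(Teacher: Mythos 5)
Your main case ($t\ge t_1$ and $r\ge t_1$) is correct and coincides in substance with the paper's proof: the same three ingredients --- the elementary bounds on $1-e^{-x}$, the two-sided comparison $v_s\asymp\phi\,\nu(v_s)$ from Lemma \ref{thm:M.3}, and Lemma \ref{rate lemma} --- produce the constant $12\le 16$. Your quantification of Lemma \ref{rate lemma} (threshold imposed on the base index $r$, supremum over the shift $t$) is the correct one.

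The genuine gap is in the transitional regime, and the patch you sketch cannot be completed. For the residual event you would need
\[
\sup_{\eta\in\FntMsrSp^o}\frac{\ProbSupProc_\eta\big(X_r\neq\NullMsr,\,X_{t_1}=\NullMsr\big)}{\eta(\phi)}<\infty
\]
uniformly in $r\in[0,t_1]$. At $r=0$ this supremum is already infinite: the numerator equals $\ProbSupProc_\eta(X_{t_1}=\NullMsr)=e^{-\eta(v_{t_1})}\to1$ while $\eta(\phi)\to0$ along $\eta=\varepsilon\delta_x$, $\varepsilon\downarrow0$. The same degeneration affects the lemma itself: since $\ProbSupProc_\eta(X_0\neq\NullMsr)=1$ for every $\eta\in\FntMsrSp^o$ and $\inf_{\eta}\eta(\phi)=0$, the displayed double supremum over $r\ge0$ and $\eta$ is $+\infty$ for every $t$ once $r=0$ is admitted (and more generally $\sup_\eta\ProbSupProc_\eta(X_r\neq\NullMsr)/\eta(\phi)$ is of order $\nu(v_r)$, which blows up as $r\downarrow0$). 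So no combination of Proposition \ref{thm:K} with finite-time profiles of $v_s$ can close this case; the ``slack between $12$ and $16$'' cannot absorb an infinite loss. The statement has to be read with $r$ bounded away from zero (say $r\ge t_1$), which is all that its only application, Proposition \ref{prop:new}, requires, since there $t$ and $r$ tend to infinity jointly; in that reading your main-case computation already finishes the proof. For comparison, the paper's own write-up does not treat small $r$ either: it runs the large-$r$ chain of inequalities for all $r>0$, even though both $\eta(v_r)\le2\eta(\phi)\nu(v_r)$ and $e^{\lambda t}\nu(v_r)\le2\nu(v_{t+r})$ are only justified for $r$ beyond a fixed threshold.
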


	\begin{proof}
		It is well known that $1-e^{-u}\leq u$ and that there is a $\delta>0$ such that  $1-e^{-u}\geq u/2$ for $u\in [0, \delta]$.
                 By \cite{MR4175472}*{(3.39)} we have $\lim_{t\to\infty}\nu(v_t)=0$.
                 By Lemma \ref{thm:M.3}, given $\mu\in\mathcal M^o$, there exists $T_2(\mu)>0$ such that $\mu(v_t)\leq \delta$ for $t\geq T_2(\mu)$.
                 Therefore, for any $\eta, \mu \in \mathcal M^o$, $t\geq T_2(\mu)$ and $r\geq 0$, we have
		\[
		\dfrac{\ProbSupProc_{\eta}\left(X_r\neq 0\right)}{\ProbSupProc_{\mu}\left(X_{t+r}\neq 0\right)}
		=\dfrac{1-e^{-\eta(v_r)}}{1-e^{-\mu(v_{t+r})}}
		\leq \dfrac{2\eta(v_r)}{\mu(v_{t+r})}.
		\]
		The uniform lower and the upper bounds of $v_r(x)$ are given in Lemma \ref{thm:M.3} as well:
		There is a $T_3\geq 0$, such that
                 $\phi(x)\nu(v_t)/2\leq v_t(x) \leq 2\phi(x)\nu(v_t) $ for $t\geq T_3$ and $x\in E$.
		From Lemma \ref{rate lemma}, there exists a $T_4>0$, such that $e^{\lambda t} \nu(v_r)/ \nu(v_{t+r})\leq 2$ for every $t\geq T_4$ and $r>0$.
		Now for any $\eta,\mu \in \mathcal M^o$, $t\geq T_2(\mu)\vee T_3\vee T_4$ and $r>0$,
		\begin{equation}
			\dfrac{\mu(\phi)e^{\lambda t}\ProbSupProc_{\eta}\left(X_r\neq 0\right)}{\eta(\phi)\ProbSupProc_{\mu}\left(X_{t+r}\neq 0\right)}\leq
			\dfrac{\mu(\phi)e^{\lambda t}}{\eta(\phi)}\dfrac{4\eta(\phi)\nu(v_r)}{\mu(\phi)\nu(v_{t+r})/2}
			\leq \dfrac{8e^{\lambda t}\nu(v_r)}{\nu(v_{t+r})} \leq 16.
		\end{equation}
		The proof is complete.
	\end{proof}

\begin{proof}[Proof of Proposition \ref{prop:new}]
		Fix arbitrary $\mu \in \mathcal M^o$ and non-negative continuous function $f$ on $E$.
	For any $t,r>0$,
	\begin{align}
		&\ProbSupProc_{\mu}  \left( e^{-X_t(f)} \middle|X_{t+r}\neq 0\right)
		=\dfrac{\ProbSupProc_{\mu}\left(e^{-X_t(f)} \mathbf 1_{\{X_{t+r}\neq 0\}}\right)}{\ProbSupProc_{\mu}(X_{t+r}\neq 0)}
		\\&=\dfrac{\ProbSupProc_{\mu}\left(e^{-X_t(f)}\ProbSupProc_{X_t}( X_r \neq 0)\right)}{\ProbSupProc_{\mu}(X_{t+r}\neq 0)}
		=\widetilde{\ProbSupProc}^{(\infty)}_{\mu}\left(e^{-X_t(f)}\dfrac{\mu(\phi)e^{\lambda t}\ProbSupProc_{X_t}(X_{r}\neq 0)}{X_t(\phi)\ProbSupProc_{\mu}(X_{t+r}\neq 0)}\right)
	\end{align}
	where probability $\widetilde{\ProbSupProc}^{(\infty)}_{\mu}$ is given as in Lemma \ref{thm:Q.1}.
		According to the Skorohod representation theorem (see \cite{MR4226142}*{Theorem 5.31}, for example)
	there exists an $\mathcal M$-valued process $(\hat X_t)_{t\geq 0}$ converging almost surely to an $\mathcal M$-valued random element $\hat X_\infty$ on some probability space $(\hat\Omega, \hat {\mathcal F}, \hat{\ProbSupProc})$ so that the law of $\hat X_\infty$ is $\mathbf Q_{\infty,\infty}$ and the law of $\hat X_t$ is $\mathbf Q_{t,\infty}^\mu$ for every $t\geq 0$.
		Now we have
		\begin{equation}
			\ProbSupProc_{\mu}  \left( e^{-X_t(f)} \middle|X_{t+r}\neq 0\right)
			=\hat{\ProbSupProc}\left(e^{-\hat X_t(f)}\dfrac{\mu(\phi)e^{\lambda t}\ProbSupProc_{\hat X_t}(X_{r}\neq 0)}{\hat X_t(\phi)\ProbSupProc_{\mu}(X_{t+r}\neq 0)}\right), \quad t,r\geq 0.
		\end{equation}
                 Note that, by Lemma \ref{lem:bounded}, there exists $T_5(\mu)>0$ such that for any $t\geq T_5(\mu)$ and $r\geq 0$,
		\begin{equation}
			e^{-\hat X_t(f)}\dfrac{\mu(\phi)e^{\lambda t}\ProbSupProc_{\hat X_t}\left(X_r\neq 0\right)}{\hat X_t(\phi)\ProbSupProc_{\mu}\left(X_{t+r}\neq 0\right)}\leq 17.
		\end{equation}
		Also notice that $e^{-\hat X_t(f)}$ converges almost surely to $e^{-\hat X_\infty (f)}$ as $t\to \infty$.
		So now if one can show that almost surely
		\begin{equation}\label{eq:claim}
			\lim_{t,r\to \infty}\dfrac{\mu(\phi)e^{\lambda t}\ProbSupProc_{\hat X_t}\left(X_r\neq 0\right)}{\hat X_t(\phi)\ProbSupProc_{\mu}\left(X_{t+r}\neq 0\right)} = 1,
		\end{equation}
		then by the bounded convergence theorem we get the desire result for this proposition.
		
		Let us now verify \eqref{eq:claim}.
		From \eqref{eq:M.25}, we have
		\begin{equation}\label{eq:fraction}
			\dfrac{\mu(\phi)e^{\lambda t}\ProbSupProc_{\hat X_t} \left(X_r\neq 0\right)}{\hat X_t(\phi)\ProbSupProc_{\mu}\left(X_{t+r}\neq 0\right)}
			=\dfrac{1-e^{-\hat X_t(v_r)}}{\hat X_t(v_r)}\dfrac{\mu(v_{t+r})}{1-e^{-\mu(v_{t+r})}}\dfrac{\mu(\phi)e^{\lambda t}\hat X_t(v_r)}{\hat X_t(\phi)\mu(v_{t+r})}, \quad t,r\geq 0.
		\end{equation}
By Lemma \ref{thm:M.3}, we  have for $t,r\geq 0$,
		\[
		\nu(v_r) \hat X_t (\phi)\Big(1-\sup_{x\in E}|\Cr{const:M.3}(r,x)|\Big)
		\leq \hat X_t(v_r)
		\leq \nu(v_r)\hat X_t(\phi)\Big(1+\sup_{x\in E}|\Cr{const:M.3}(r,x)|\Big),
		\]
		with $\lim_{r\to\infty}\sup_{x\in E}|\Cr{const:M.3}(r,x)| = 0$.
                By \cite{MR4175472}*{(3.39)} we have $\lim_{r\to\infty}\nu(v_r)=0$ and that
		\begin{equation}
			\limsup_{t\to \infty} \hat X_t(\phi)\leq \lim_{t\to \infty} \hat X_t(\|\phi\|_\infty \mathbf 1_E) = \hat X_\infty(\|\phi\|_\infty \mathbf 1_E)< \infty, \quad \text{a.s.}
		\end{equation}
                 Thus $\lim_{t,r\to \infty}\hat X_t(v_r)  = 0$ almost surely. Therefore,
		\[
\lim_{t,r\to\infty}\dfrac{1-e^{-\hat X_t(v_r)}}{\hat X_t(v_r)}=1, \quad \text{a.s.}
		\]
		Similarly we have for every $t,r\geq 0$,
		\[
		\nu(v_{t+r}) \mu (\phi)\Big(1-\sup_{x\in E}|\Cr{const:M.3}({t+r},x)|\Big)
		\leq \mu(v_{t+r})
		\leq \nu(v_{t+r})\mu(\phi)\Big(1+\sup_{x\in E}|\Cr{const:M.3}(t+r,x)|\Big),
		\]
		and
		\[\lim_{t,r\to\infty}\dfrac{\mu(v_{t+r})}{1-e^{-\mu(v_{t+r})}}=1.\]
         Using Lemma \ref{rate lemma} for the third fraction
		on the right hand side of \eqref{eq:fraction},
         we get
		\begin{align}
			&\limsup_{t,r\to\infty}\dfrac{\mu(\phi)e^{\lambda t}\hat X_t(v_r)}{\hat X_t(\phi)\mu(v_{t+r})}
			\leq  \limsup_{t,r\to\infty}\dfrac{\mu(\phi)e^{\lambda t}\nu(v_r)\hat X_t(\phi)(1+\sup_{x\in E}|\Cr{const:M.3}(r,x)|)}{\hat X_t(\phi)\nu(v_{t+r})\mu(\phi)(1-\sup_{x\in E}|\Cr{const:M.3}(t+r,x)|)}
			\\&=\limsup_{t,r\to\infty}\dfrac{e^{\lambda t}\nu(v_r)}{\nu(v_{t+r})}=1.
		\end{align}
	On the other hand,
		\begin{align}
			&\liminf_{t,r\to\infty}\dfrac{\mu(\phi)e^{\lambda t}\hat X_t(v_r)}{\hat X_t(\phi)\mu(v_{t+r})}
			\geq  \liminf_{t,r\to\infty}\dfrac{\mu(\phi)e^{\lambda t}\nu(v_r)\hat X_t(\phi)
				(1-\sup_{x\in E}|\Cr{const:M.3}(r,x)|)}{\hat X_t(\phi)\nu(v_{t+r})\mu(\phi)(1+\sup_{x\in E}|\Cr{const:M.3}(t+r,x)|)}
			\\&=\liminf_{t,r\to\infty}\dfrac{e^{\lambda t}\nu(v_r)}{\nu(v_{t+r})}=1.
		\end{align}
	The proof is complete.
\end{proof}

\appendix

\section{} \label{app:A}

\begin{lemma} \label{thm:A.05}
	The transition semigroup $(Q_t)_{t\geq 0}$ given
	in \eqref{eq:I.13} preserves $\Bndd\Borel(\FntMsrSp)$.
\end{lemma}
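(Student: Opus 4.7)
The plan is to apply the functional monotone class theorem, starting from the defining Laplace identity \eqref{eq:I.13} and extending measurability from the multiplicative family $\mathcal{A}:=\{F_f:f\in \BnddPos C(E)\}$ with $F_f(\eta):=e^{-\eta(f)}$ to all of $\Bndd\Borel(\FntMsrSp)$.

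For the base case, fix $t\geq 0$ and $f\in \BnddPos\Borel(E)$; by \eqref{eq:I.13},
\begin{equation}
(Q_t F_f)(\mu):=\int_\FntMsrSp e^{-\eta(f)}\,Q_t(\mu,\Diff \eta)=e^{-\mu(V_t f)}.
\end{equation}
Since $V_t f$ was constructed as a non-negative bounded Borel function on $E$, matters reduce to showing that $\mu\mapsto \mu(g)$ is Borel on $\FntMsrSp$ for every $g\in\BnddPos\Borel(E)$. This is tautological for $g\in C_b(E)$ by the definition of the weak topology on $\FntMsrSp$, and the class of such $g$ is a vector space closed under bounded pointwise limits; a standard monotone-class argument on $E$ extends it to all of $\Bndd\Borel(E)$. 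Consequently $\mathcal{A}\subset \mathcal{H}$, where $\mathcal{H}\subset\Bndd\Borel(\FntMsrSp)$ denotes the collection of $F$ such that $Q_tF\in \Bndd\Borel(\FntMsrSp)$.

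Next, I would observe that $\mathcal{H}$ contains the constants, is a vector space, and is closed under bounded non-decreasing pointwise limits: if $F_n\uparrow F$ boundedly, then $(Q_tF_n)(\mu)\uparrow (Q_tF)(\mu)$ for every $\mu$ by the monotone convergence theorem applied to each kernel $Q_t(\mu,\cdot)$, so $Q_tF$ inherits bounded Borel measurability. Moreover, $\mathcal{A}$ is closed under pointwise multiplication because $F_{f_1}F_{f_2}=F_{f_1+f_2}\in\mathcal{A}$. The functional monotone class theorem therefore yields that $\mathcal{H}$ contains every bounded $\sigma(\mathcal{A})$-measurable function on $\FntMsrSp$.

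The final ingredient is the identification $\sigma(\mathcal{A})=\Borel(\FntMsrSp)$. The inclusion $\sigma(\mathcal{A})\subset \Borel(\FntMsrSp)$ is clear since each $F_f$ is continuous on $\FntMsrSp$. For the reverse inclusion, $\FntMsrSp$ is Polish under weak convergence, $\mathcal{A}$ separates points of $\FntMsrSp$ (Laplace functionals determine finite measures), and a point-separating family of continuous functions on a Polish space generates its Borel $\sigma$-algebra. This $\sigma$-algebra identification is the only non-bookkeeping step; once it is granted, the monotone-class sweep and the bounded convergence theorem finish the proof.
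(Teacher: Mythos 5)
Your argument is correct, but it takes a genuinely different route from the paper's. The paper does not work directly on $\FntMsrSp$: it adjoins a cemetery point $\partial$ to $E$ so that the spatial motion becomes conservative, invokes \cite{MR2760602}*{Theorem 5.11} to conclude that the associated semigroup $(\overline Q_t)_{t\geq 0}$ on $\FntMsrSp(\overline E)$ is a Borel right semigroup, and then transfers measurability back to $\FntMsrSp$ via the continuous restriction and extension maps $\Gamma$, $\Lambda$ and the intertwining $Q_tF=(\overline Q_t(F\circ\Gamma))\circ\Lambda$. You instead argue directly from the Laplace identity \eqref{eq:I.13} with a functional monotone class sweep, which is self-contained modulo two standard facts: that $\mu\mapsto\mu(g)$ is Borel on $\FntMsrSp$ for every $g\in\Bndd\Borel(E)$, and that $\sigma(\mathcal A)=\Borel(\FntMsrSp)$. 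The one place I would tighten your wording is the latter identification: the blanket claim that an arbitrary (possibly uncountable) point-separating family of continuous functions on a Polish space generates its Borel $\sigma$-algebra is delicate, since $\sigma(\mathcal A)$ is the union of the $\sigma$-algebras generated by countable subfamilies. It is cleaner to quote the standard fact (e.g.\ \cite{MR2760602}*{Section 1.1}) that $\Borel(\FntMsrSp)$ is already generated by the maps $\mu\mapsto\mu(f)$ with $f$ ranging over a suitable countable subset of $\Bndd C(E)$, from which $\sigma(\mathcal A)=\Borel(\FntMsrSp)$ follows at once because $\mu(f)=-\log F_f(\mu)$ for $f\geq 0$. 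With that reference in place your proof is complete. In terms of trade-offs: the paper's reduction leans on the heavier machinery of Li's construction but fits the conservative-motion framework used elsewhere in the book, while your argument is more elementary and proves exactly the stated preservation property without passing through $\overline E$.
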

\begin{proof}
	Denote $\overline E:= E\cup \{\partial\}$, where $\partial$ is an isolated point not contained in $E$.
	The Polish space of all finite Borel measures on $\overline E$, equipped with the topology of weak convergence, is denoted by $\FntMsrSp(\overline E)$.
	Define a
	conservative Borel right transition semigroup $(\overline P_t)_{t\geq 0}$ on $(\overline E,\Borel (\overline E))$ using \cite{MR2760602}*{(A.20)}.
	Let $\overline \xi$ be a Borel right process with transition semigroup $(\overline P_t)_{t\geq 0}$.
	Define $\overline \psi$ as the extension of $\psi$ on $\overline E \times \RealLine_+$ such that $\overline \psi(\partial,\cdot) \equiv 0$.
	Let $\overline X$ be a $(\overline \xi, \overline \psi)$-superprocess whose transition semigroup is denoted by $(\overline Q_t)_{t\geq 0}$.
According to \cite{MR2760602}*{Theorem 5.11}, $(\overline Q_t)_{t\geq 0}$ is a Borel right transition semigroup on $\mathcal M(\overline E)$.
	Define a map $\Gamma: \mathcal M(\overline E) \to \FntMsrSp$ so that for any $\mu\in \mathcal M(\overline E)$, the measure $\Gamma \mu$ is the restriction of the set function $\mu$ on $\Borel (E)$.
	Define a map $\Lambda: \FntMsrSp \to \mathcal M(\bar E)$ so that for any $\mu \in \FntMsrSp$, the measure $\Lambda \mu$ on $\overline E$ is the unique extension of $\mu$ on $\Borel (\overline E)$ so that $\Lambda \mu (\{\partial\}) = 0$.
	Obviously we have $\Gamma \circ \Lambda$ is the identical map on $\FntMsrSp$; and from the fact that $\partial$ is an isolated point in $\overline E$ we know $\Gamma$ and $\Lambda$ are continuous maps.
	Fix an arbitrary $t\geq 0$ and $F\in \Bndd\Borel(\FntMsrSp)$.
	It can be verified (see the proof of \cite{MR2760602}*{Theorem 5.12}) that
	$\overline Q_t(F\circ \Gamma) (\overline \mu) = (Q_t F) \circ \Gamma(\overline \mu)$ for each $\overline \mu \in \mathcal M(\overline E)$.
	From this we can verify that $Q_t F= (\overline Q_t(F\circ \Gamma) ) \circ \Lambda$ is a real valued bounded Borel function on $\FntMsrSp$.
	Therefore, the semigroup $(Q_t)_{t\geq 0}$ preserves $\Bndd\Borel(\FntMsrSp)$ as desired.
\end{proof}

\begin{proof}[Proof of Lemma \ref{thm:L.2} (1)]
Fix arbitrary $0<\delta < \epsilon$.
From \eqref{eq:K.42}, we have
\begin{align}
	\sum_{k=1}^\infty \Ind_{(-\infty,0]}(s_k) y_k e^{\epsilon s_k} \phi(\Spn_{s_k})
	=\int_{\RealLine\times \RealLine_+} \Ind_{\{s\leq 0\}} y e^{\epsilon s}\phi(\Spn_s) \mathcal D(\Diff s,\Diff y)
	= \text{I}+\text{II},
\end{align}
	where
\begin{align}
	&\text{I}
	:=\int_{\RealLine\times \RealLine_+} \Ind_{\{y\leq \frac{e^{-\delta s}}{\phi(\Spn_{s})},s\leq 0\}}y e^{\epsilon s}\phi(\Spn_{s}) \mathcal D(\Diff s,\Diff y),
	\\ & \text{II}
	:= \int_{\RealLine\times \RealLine_+} \Ind_{\{y> \frac{e^{-\delta s}}{\phi(\Spn_{s})},s\leq 0\}} ye^{\epsilon s}\phi(\Spn_{s}) \mathcal D(\Diff s,\Diff y).
\end{align}
We first show that $\text{II}< \infty$, $\ProbSpnDec$-a.s.
In fact, from \eqref{eq:K.41} and \eqref{eq:I.35},
\begin{align}
	&\ProbSpnDec\Big[\int_{-\infty}^0  \Diff s \int_{\frac{e^{-\delta s}}{\phi(\Spn_{s})}}^\infty y \pi(\Spn_s, \Diff y) \Big]
	=\int_{-\infty}^0 \Diff s \int_E
\tilde \nu(\Diff x)
\int_{\frac{e^{-\delta s}}{\phi(x)}}^\infty y\pi(x, \Diff y)
	\\&=\int_{-\infty}^0 \Diff s \int_E \phi(x) \nu(\Diff x)\int_{\frac{e^{-\delta s}}{\phi(x)}}^\infty y\pi(x, \Diff y)
	\\&= \int_E\nu(\Diff x)\int_{\frac{1}{\phi(x)}}^\infty y\phi(x) \pi(x, \Diff y)
\int_{-\frac{\log (y\phi(x))}{\delta}}^{0}\Diff s
	=\frac{\mathcal E}{\delta}
	<\infty.
\end{align}
	Therefore, we have $\ProbSpnDec$-almost surely
\begin{align}
 &\int_{\RealLine\times \RealLine_+}\Big[1\wedge \Big(\Ind_{\{y> \frac{e^{-\delta s}}{\phi(\Spn_{s})},s\leq 0\}} ye^{\epsilon s}\phi(\Spn_{s})\Big) \Big]  \Diff s \cdot y \pi(\Spn_s, \Diff y)
	\\&\leq   \int_{\RealLine\times \RealLine_+}  \Ind_{\{y> \frac{e^{-\delta s}}{\phi(\Spn_{s})},s\leq 0\}} \Diff s\cdot y \pi(\Spn_s, \Diff y)
	< \infty.
\end{align}
Now from \eqref{eq:K.42} and \cite{MR3155252}*{Theorem 2.7(i)} we have $\ProbSpnDec(\text{II} < \infty | \Spn) = 1$.
We then show that $\text{I} < \infty$, $\ProbSpnDec$-a.s.
	In fact, from \eqref{eq:K.41} and \eqref{eq:K.42},
\begin{align}
	&\ProbSpnDec[\text{I}]
	= \ProbSpnDec\big[\ProbSpnDec[\text{I}|\xi]\big]
	=\ProbSpnDec\Big[\int_{-\infty}^0 \Diff s \int_0^{\frac{e^{-\delta s}}{\phi(\Spn_s)}} ye^{\epsilon s}\phi(\Spn_s) y \pi(\Spn_s, \Diff y)\Big]
	\\&=\ProbSpnDec \Big[ \int_{-\infty}^0 \Diff s \int_0^{1\wedge \frac{e^{-\delta s}}{\phi(\Spn_s)}} e^{\epsilon s} y^2 \phi(\Spn_s)\pi(\Spn_s, \Diff y) + \int_{-\infty}^0 \Diff s \int_{1\wedge \frac{e^{-\delta s}}{\phi(\Spn_s)}}^{\frac{e^{-\delta s}}{\phi(\Spn_s)}} e^{\epsilon s}y\phi(\Spn_s) y \pi(\Spn_s, \Diff y)\Big]
	\\&\leq \ProbSpnDec \Big[ \int_{-\infty}^0 \Diff s \int_0^1 e^{\epsilon s} y^2 \phi(\Spn_s)\pi(\Spn_s, \Diff y) + \int_{-\infty}^0 \Diff s \int_{1}^{\infty} e^{(\epsilon-\delta) s} y \pi(\Spn_s, \Diff y)\Big]
     \\& \leq \|\phi\|_\infty\int_{-\infty}^0  e^{\epsilon s}\Diff s \int_E \tilde \nu(\Diff x)\int_0^1y^2 \pi(x, \Diff y)
 + \int_{-\infty}^0  e^{(\epsilon-\delta )s}\Diff s\int_E \tilde \nu(\Diff x)\int_1^{\infty}y \pi(x, \Diff y)
	\\&<\infty.
\end{align}
Now the desired result of this lemma follows.
\end{proof}
\begin{proof}[Proof of Lemma \ref{thm:L.2} (2)]
	Fix the arbitrary $\epsilon >0$ and $s_0\geq 0$.
	Define a constant
\begin{equation} \label{eq:E.934}
	K
	:= \max \{\|\phi\|_\infty,e^{\epsilon s_0}\},
\end{equation}
	and random variables
\begin{equation}
	\eta_T
	:= \int_{-T}^{0} \Diff s \int_{\frac{Ke^{-\epsilon s}}{\phi(\Spn_s)}}^\infty y\pi(\Spn_s,\Diff y),
	\quad T \in (0,\infty].
\end{equation}

	\emph{Step 1.}
	We will show that $\ProbSpnDec[\eta_\infty]=\infty.$
In particular, this implies that
there exists a $t_1>0$ such that $\ProbSpnDec[\eta_T]>0$ for all $T\geq t_1$.
To show that $\ProbSpnDec[\eta_\infty]=\infty,$
we note from  \eqref{eq:K.41} and Fubini's theorem  that
\begin{align}
	&\ProbSpnDec[\eta_\infty]
     = \int_{-\infty}^{0} \Diff s \int_E \tilde\nu(\Diff x) \int_{\frac{Ke^{-\epsilon s}}{\phi(x)}}^\infty y\pi(x, \Diff y)
	\\ &= \int_E\phi(x)\nu(\Diff x)\int_{\frac{K}{\phi(x)}}^\infty y\pi(x, \Diff y)
\int_{-\frac{1}{\epsilon}\log(\frac{y\phi(x)}{K})}^0\Diff s
	\\&=\frac{1}{\epsilon}\int_E\phi(x)\nu(\Diff x)\int_{\frac{K}{\phi(x)}}^\infty
\Big(\log\big(y\phi(x)\big)-\log K\Big)y\pi(x, \Diff y)
	\\\label{eq:E.935}&\geq \frac{1}{\epsilon}\int_E\nu(\Diff x) \int_{\frac{K}{\phi(x)}}^\infty y\phi(x)
\log \big(y\phi(x)\big)\pi(x, \Diff y)-\frac{A}{\epsilon},
\end{align}
	where
\begin{equation} \label{eq:E.936}
	A
	:= \ln K \cdot \sup_{x\in E} \int_1^\infty y \pi(x,\Diff y)
	< \infty.
\end{equation}
	Since
\[
	\int_E\nu(\Diff x) \int_{\frac{1}{\phi(x)}}^\infty y\phi(x)
\log \big(y\phi(x)\big)\pi(x, \Diff y)
	= \mathcal E
	=\infty
\]
	and
\begin{align}
	\int_E\nu(\Diff x) \int_{\frac{1}{\phi(x)}}^{\frac{K}{\phi(x)}} y\phi(x)
\log \big(y\phi(x)\big)\pi(x, \Diff y)
\leq K \log K \int_E \nu(\Diff x) \int_{\frac{1}{\|\phi\|_\infty}}^\infty \pi(x, \Diff y)
	<\infty,
\end{align}
	we get that
\begin{equation}
	\int_E\nu(\Diff x) \int_{\frac{K}{\phi(x)}}^\infty y\phi(x)
\log \big(y\phi(x)\big)\pi(x, \Diff y)
	= \infty.
\end{equation}
	Now the desired result in this step follows from \eqref{eq:E.935}, \eqref{eq:E.936} and above.

\emph{Step 2.}
	We will show that $\ProbSpnDec[\eta_T]<\infty$ for all $T\in (0,\infty)$.
	From \eqref{eq:K.41}, \eqref{eq:E.934} and Fubini's theorem, we have
\begin{align}
	\label{eq:E.9375}&\ProbSpnDec[\eta_T]
     = \int_{-T}^{0} \Diff s \int_E \tilde \nu(\Diff x)
 \int_{\frac{Ke^{-\epsilon s}}{\phi(x)}}^\infty y\pi(x, \Diff y)
	\leq \int_{-T}^{0} \Diff s \int_E \tilde \nu(\Diff x) \int_1^\infty y\pi(x, \Diff y)
	\\& \leq T \cdot \sup_{x\in E} \int_1^\infty y\pi(x, \Diff y)
	< \infty.
\end{align}
	
\emph{Step 3.}
	We will show that there exists a $t_2>0$ such that for any $t>t_2$, $x\in E$, and  $f\in \BnddPos\Borel(E)$, it holds that
$S_tf(x) \leq 2\tilde\nu( f).$
	In fact, let $H$ be as in \eqref{asp:H2}, then there exists a $t_2>0$ such that for any $t>t_2$, $x\in E$ and $f\in \BnddPos\Borel(E)$,
	$|H_{t}(\phi f)(x)|\leq 1$.
	Now for any $t>t_2$, $x\in E$ and $f\in \BnddPos\Borel(E)$, we can verify from \eqref{eq:E.17} and \eqref{asp:H2} that
\begin{align}
	&S_tf(x)
	= \frac{1}{\phi(x)e^{\lambda t}}T_t (\phi f) (x)
	=  \nu(\phi f) (1+ H_{t}(\phi f)(x))
    \leq 2\tilde \nu(f).
\end{align}

\emph{Step 4.}
Let $t_0:= \max\{t_1,t_2\}$.
We will show that there exists a constant $\C\label{const:E.9376}>0$ such that for all $T> t_0$, it holds that $\ProbSpnDec[\eta_T^2]\leq \Cr{const:E.9376}\ProbSpnDec[\eta_T]^2$.
	To do this we note that for any $T>t_0$,
\begin{align}
	&\ProbSpnDec[\eta_T^2]
	=\ProbSpnDec \Big[\Big(\int_{-T}^0\Diff t\int_{\frac{Ke^{-\epsilon t}}{\phi(\Spn_t)}}^\infty r \pi(\Spn_t, \Diff r) \Big)\cdot \Big(\int_{-T}^0\Diff s\int_{\frac{Ke^{-\epsilon s}}{\phi(\Spn_s)}}^\infty u \pi(\Spn_s, \Diff u)\Big) \Big]
	\\&=2\ProbSpnDec\Big[ \int_{-T}^0\Diff t\int_{\frac{K e^{-\epsilon t}}{\phi(\Spn_t)}}^\infty r \pi(\Spn_t, \Diff r)\int_t^0\Diff s
	\int_{\frac{Ke^{-\epsilon s}}{\phi(\Spn_s)}}^\infty u \pi(\Spn_s, \Diff u) \Big]
	=\text{III}+\text{IV}
\end{align}
	where
\begin{align}
	\text{III}
	:= 2\ProbSpnDec\Big[\int_{-T}^0\Diff t\int_{\frac{Ke^{-\epsilon t}}{\phi(\Spn_t)}}^\infty r \pi(\Spn_t, \Diff r)\int_t^{(t+t_0)\wedge 0}\Diff s\int_{\frac{Ke^{-\epsilon s}}{\phi(\Spn_s)}}^\infty u \pi(\Spn_s,\Diff u) \Big]
\end{align}
	and
\begin{align}
	\text{IV}
	:= 2\ProbSpnDec\Big[\int_{-T}^0\Diff t \int_{\frac{Ke^{-\epsilon t}}{\phi(\Spn_t)}}^\infty r \pi(\Spn_t, \Diff r) \int_{(t+t_0)\wedge 0}^0\Diff s\int_{\frac{Ke^{-\epsilon s}}{\phi(\Spn_s)}}^\infty u \pi(\Spn_s, \Diff u) \Big].
\end{align}

We first show that there exists a constant $\C\label{const:E.94}>0$ such that for all $T>t_0$, it holds that $ \text{III} \leq \Cr{const:E.94} \ProbSpnDec[\eta_T]^2.$
	In fact, note $t\mapsto \ProbSpnDec[\eta_t]$ is non-decreasing, we have for any $T > t_0$,
\begin{align}
	&\text{III}
	\leq 2\ProbSpnDec\Big[\int_{-T}^0\Diff t\int_{\frac{Ke^{-\epsilon t}}{\phi(\Spn_t)}}^\infty r \pi(\Spn_t, \Diff r)\int_t^{t+t_0}\Diff s\int_{1}^\infty u \pi(\Spn_s,\Diff u) \Big]
	\\&\leq 2 t_0 \Big( \sup_{x\in E} \int_{1}^\infty u \pi(x,\Diff u)\Big) \ProbSpnDec[\eta_T]
	\leq \frac{2 t_0}{\ProbSpnDec[\eta_{t_0}]} \Big( \sup_{x\in E} \int_{1}^\infty u \pi(x,\Diff u)\Big) \ProbSpnDec[\eta_T]^2.
\end{align}
	Note that from Step 1,
\[
	\frac{2 t_0}{\ProbSpnDec[\eta_{t_0}]} \Big( \sup_{x\in E} \int_{1}^\infty u \pi(x,\Diff u)\Big) < \infty.
\]

We now show that
	for any $T>t_0$, $\text{IV} \leq 4 \ProbSpnDec[\eta_T]^2$.
For fixed $T>t_0$, it follows from Fubini's theorem and \eqref{eq:K.41} that
\begin{align}
	&\text{IV}
	= \ProbSpnDec\Big[ 2\int_{-T}^{-t_0} \Diff t \int_{\frac{Ke^{-\epsilon t}}{\phi(\Spn_t)}}^\infty r \pi(\Spn_t, \Diff r) \int_{t+t_0}^0\Diff s\int_{\frac{Ke^{-\epsilon s}}{\phi(\Spn_s)}}^\infty u \pi(\Spn_s, \Diff u) \Big]
	\\&= 2\int_{-T}^{-t_0}\ProbSpnDec\Big[\int_{\frac{Ke^{-\epsilon t}}{\phi(\Spn_t)}}^\infty r \pi(\Spn_t, \Diff r) \int_{t+t_0}^0\Diff s\int_{\frac{Ke^{-\epsilon s}}{\phi(\Spn_s)}}^\infty u \pi(\Spn_s, \Diff u) \Big] \Diff t
	\\ & = 2\int_{-T}^{-t_0} \ProbSpnDec \bigg[ \ProbSpnDec\Big[ \int_{\frac{Ke^{-\epsilon t}}{\phi(\Spn_t)}}^\infty r \pi(\Spn_t, \Diff r) \int_{t+t_0}^0 \Diff s \int_{\frac{Ke^{-\epsilon s}}{\phi(\Spn_s)}}^\infty u \pi(\Spn_s, \Diff u) \Big| \tilde \srF_t \Big] \bigg]
	\Diff t
	\\ &  = 2\int_{-T}^{-t_0}\ProbSpnDec \bigg[ \int_{\frac{Ke^{-\epsilon t}}{\phi(\Spn_t)}}^\infty r \pi(\Spn_t, \Diff r) \int_{t+t_0}^0  \ProbSpnDec\Big[\int_{\frac{Ke^{-\epsilon s}}{\phi(\Spn_s)}}^\infty u \pi(\Spn_s, \Diff u) \Big| \tilde \srF_t \Big] \Diff s \bigg]\Diff t
	\\ &  = 2\int_{-T}^{-t_0}\ProbSpnDec \bigg[ \int_{\frac{Ke^{-\epsilon t}}{\phi(\Spn_t)}}^\infty r \pi(\Spn_t, \Diff r) \int_{t+t_0}^0  S_{s-t}h_s(\Spn_t) \Diff s \bigg]\Diff t
	\\\label{eq:E.938}&= 2\int_{-T}^{-t_0}\Diff t
\int_E\tilde \nu(\Diff y)
\int_{\frac{Ke^{-\epsilon t}}{\phi(y)}}^\infty r \pi(y, \Diff r) \int_{t+t_0}^0 S_{s-t}h_s (y) \Diff s,
\end{align}	
where, for any $t\in\RealLine$, $\tilde \srF_t := \sigma(\Spn_s:s\in (-\infty,t])$, and for any $s\leq 0$ and $y\in E$,
\begin{equation} \label{eq:E.939}
	h_s(y)
	:= \int_{\frac{Ke^{-\epsilon s}}{\phi(y)}}^\infty u \pi(y,\Diff u)
	\leq \sup_{x\in E}\int_{1}^\infty u \pi(x,\Diff u)
	<\infty.
\end{equation}
	Note for any $t\in (-T,-t_0)$ and $s\in (t+t_0,0)$, we have $s-t\geq t_0\geq t_2$,
	which, together with Step 3,
	implies that for any $y\in E$, $S_{s-t}h_s(y) \leq 2
 \tilde\nu(h_s)$.
	Therefore, from \eqref{eq:E.9375}, \eqref{eq:E.938} and \eqref{eq:E.939}, we have
\begin{align}
	&\text{IV}
	\leq 4 \int_{-T}^{-t_0}\Diff t
\int_E\tilde \nu(\Diff y)
\int_{\frac{Ke^{-\epsilon t}}{\phi(y)}}^\infty r \pi(y, \Diff r) \int_{t+t_0}^0
\tilde \nu(h_s) \Diff s
	\\ &\leq 4 \int_{-T}^0
\tilde \nu(h_t)
\Diff t \cdot \int_{-T}^0
\tilde \nu(h_s)\Diff s
	= 4\ProbSpnDec[\eta_T]^2.
\end{align}
	Now the desired result in this step follows.

\emph{Step 5.}
	We show that $\ProbSpnDec(\eta_{\infty}=\infty) >0$.
	Note that for any $T\geq 0$, from the Cauchy-Schwartz inequality,
\begin{align}
	&\sqrt{\ProbSpnDec[\eta_T^2] \ProbSpnDec\Big(\eta_T \geq \frac{1}{2}\ProbSpnDec[\eta_T]\Big)}
	\geq \ProbSpnDec[\eta_T\Ind_{\{\eta_T\geq \frac{1}{2}\ProbSpnDec[\eta_T]\}}]
      \\&= \ProbSpnDec[\eta_T] - \ProbSpnDec[\eta_T\Ind_{\{\eta_T< \frac{1}{2}\ProbSpnDec[\eta_T]\}} ]
      \geq \ProbSpnDec[\eta_T] - \ProbSpnDec\Big[\frac{1}{2}\ProbSpnDec[\eta_T]\Ind_{\{\eta_T< \frac{1}{2}\ProbSpnDec[\eta_T]\}}\Big]
	\geq \frac{1}{2}\ProbSpnDec[\eta_T].
\end{align}
	Let $t_0$ be as in Step 4.
	Then, there exists a $\C\label{const:E.99}>0$ such that for any $T\geq t_0$,
\begin{align}\label{eq:E.94}
	\ProbSpnDec\Big(\eta_\infty \geq \frac{1}{2}\ProbSpnDec[\eta_T]\Big)
	\geq \ProbSpnDec\Big(\eta_T \geq \frac{1}{2}\ProbSpnDec[\eta_T]\Big)
	\geq \frac{\ProbSpnDec[\eta_T]^2}{4\ProbSpnDec[\eta_T^2]}
	\geq \Cr{const:E.99}.
\end{align}
	By the monotone convergence theorem and Step 1, we have
\[
	\ProbSpnDec[\eta_T]
	\xrightarrow[T\to \infty]{} \ProbSpnDec[\eta_\infty]
	= \infty.
\]
Now by \eqref{eq:E.94} and the monotone convergence theorem again,
\begin{align}
	\ProbSpnDec(\eta_\infty =\infty)
	= \lim_{T\to \infty} \ProbSpnDec\Big(\eta_\infty \geq \frac{1}{2}\ProbSpnDec[\eta_T]\Big)
	\geq \Cr{const:E.99}.
\end{align}

\emph{Step 6.}
	We will show that
$\tilde \nu$
 is an ergodic measure of the semigroup $(S_t)_{t\geq 0}$ in the sense of \cite{MR1417491}*{Section 3.2}.
	To do this, we claim that for any
$\varphi \in L^2(\tilde \nu)$
satisfying $S_t \varphi = \varphi$ in
$L^2(\tilde \nu)$
 for all $t\geq 0$, it holds that $\varphi$ is a constant
$\tilde \nu$-a.e.
	In fact for
$\tilde \nu$-
almost every $x\in E$, from \eqref{asp:H2} and \eqref{eq:E.17}, we have
\begin{align}
	&\varphi(x)
	= S_t \varphi(x)
	= \frac{1}{e^{\lambda t}\phi(x)}T_t (\phi \varphi^+) (x) - \frac{1}{e^{\lambda t}\phi(x)}T_t (\phi \varphi^-) (x)
      \\&  =\nu(\phi \varphi^+) \big(1+ H_{t}(\phi \varphi^+)(x)\big) - \nu(\phi \varphi^-) \big(1+ H_{t}(\phi \varphi^-)(x)\big)
	\xrightarrow[t\to \infty]{} \nu( \phi \varphi),
\end{align}
	which implies the desired claim.
Now the desired result in this step follows from \cite{MR1417491}*{Theorem 3.2.4.}.

\emph{Step 7.}
	We will show that $\{\eta_\infty = \infty\}$ is an invariant event for this ergodic process $(\Spn_t)_{t\in \RealLine}$ under $\ProbSpnDec$ in the sense that, for any $t \in \RealLine$, $\ProbSpnDec(A_{0} \Delta A_{t}) = 0$ where
\[
	A_{t}
	:= \Big\{\int_{-\infty}^{0} \Diff s \int_{\frac{Ke^{-\epsilon s}}{\phi(\Spn_{s+t})}}^\infty y\pi(\Spn_{s+t},\Diff y) = \infty\Big\}, \quad t\in \RealLine.
\]
 We first claim that
\begin{equation}
 \label{eq:E.941}
	 A_r \subset A_{r-t}, \quad r\in \RealLine,t> 0.
\end{equation}
	In fact,  on the event $A_r$ we have
\begin{align}
	&\int_{-\infty}^0 \Diff s \int_{\frac{Ke^{-\epsilon s}}{\phi(\Spn_{s+r-t})}}^\infty y\pi(\Spn_{s+r-t},\Diff y)
	\geq \int_{-\infty}^0 \Diff s \int_{\frac{K e^{-\epsilon (s-t)}}{\phi(\Spn_{s+r-t})}}^\infty y\pi(\Spn_{s+r-t},\Diff y)
      \\&= \int_{-\infty}^{-t} \Diff s \int_{\frac{K e^{-\epsilon s}}{\phi(\Spn_{s+r})}}^\infty y\pi(\Spn_{s+r},\Diff y)
	\\&= \int_{-\infty}^0 \Diff s \int_{\frac{K e^{-\epsilon s}}{\phi(\Spn_{s+r})}}^\infty y\pi(\Spn_{s+r},\Diff y) - \int_{-t}^0 \Diff s \int_{\frac{K e^{-\epsilon s}}{\phi(\Spn_{s+r})}}^\infty y\pi(\Spn_{s+r},\Diff y)
	\\&\geq \int_{-\infty}^0 \Diff s \int_{\frac{K e^{-\epsilon s}}{\phi(\Spn_{s+r})}}^\infty y\pi(\Spn_{s+r},\Diff y) - |t| \cdot \sup_{x\in E} \int_1^\infty y\pi(x,\Diff y)
	= \infty,
\end{align}
as claimed.
We then claim that for
any
$r\in \RealLine$ and $t\geq 0$, $\ProbSpnDec$-almost surely,
\begin{align}
\label{eq:E.95}
	\int_{-\infty}^0 \Diff s \int_{\frac{Ke^{-\epsilon (s+t)}}{\phi(\Spn_{s+r+t})}}^\frac{Ke^{-\epsilon s}}{\phi(\Spn_{s+r+t})} y\pi(\Spn_{s+r+t},\Diff y)
	< \infty.
\end{align}
In fact, by \eqref{eq:K.41},
\begin{align}
	&\ProbSpnDec\Big[ \int_{-\infty}^0 \Diff s \int_{\frac{Ke^{-\epsilon (s+t)}}{\phi(\Spn_{s+r+t})}}^\frac{Ke^{-\epsilon s}}{\phi(\Spn_{s+r+t})} y\pi(\Spn_{s+r+t},\Diff y) \Big]
	 = \int_{-\infty}^0 \Diff s
\int_E \tilde \nu(\Diff x)
\int^{\frac{Ke^{-\epsilon s}}{\phi(x)}}_\frac{Ke^{-\epsilon (s+t)}}{\phi(x)} y\pi(x,\Diff y)
    \\&= \int_E \tilde \nu(\Diff x)
 \int^\infty_\frac{Ke^{-\epsilon t}}{\phi(x)} y\pi(x,\Diff y) \int_{-\frac{1}{\epsilon}
 \log \frac{y\phi(x) e^{\epsilon t}}{K}}^{-\frac{1}{\epsilon}\log\frac{y\phi(x)}{K}} \Diff s
	\\&= t\int_E
\tilde \nu(\Diff x)
\int^\infty_\frac{Ke^{-\epsilon t}}{\phi(x)} y\pi(x,\Diff y)
	\leq t \cdot \sup_{x\in E} \int^\infty_{e^{-\epsilon t}} y\pi(x,\Diff y)
	< \infty
\end{align}
which implies the claim.
Finally, we claim that
	\begin{equation} \label{eq:E.951}
	A_r \cap \Omega_{r,t} \subset A_{r+t}, \quad r\in \RealLine, t> 0,
\end{equation}
where $\Omega_{r,t}$ is the event that \eqref{eq:E.95} holds.
	In fact,  on the event $A_r\cap \Omega_{r,t}$ we have
\begin{align}
	&\int_{-\infty}^0 \Diff s \int_{\frac{K e^{-\epsilon s}}{\phi(\Spn_{s+r+t})}}^\infty y\pi(\Spn_{s+r+t},\Diff y)
	\\&= \int_{-\infty}^0 \Diff s \int_{\frac{K e^{-\epsilon (s+t)}}{\phi(\Spn_{s+r+t})}}^\infty y\pi(\Spn_{s+r+t},\Diff y) - \int_{-\infty}^0 \Diff s \int_{\frac{Ke^{-\epsilon (s+t)}}{\phi(\Spn_{s+r+t})}}^\frac{Ke^{-\epsilon s}}{\phi(\Spn_{s+r+t})} y\pi(\Spn_{s+r+t},\Diff y)
	\\&= \int_{-\infty}^t \Diff s \int_{\frac{Ke^{-\epsilon s}}{\phi(\Spn_{s+r})}}^\infty y\pi(\Spn_{s+r},\Diff y) -  \int_{-\infty}^0 \Diff s \int_{\frac{Ke^{-\epsilon (s+t)}}{\phi(\Spn_{s+r+t})}}^\frac{Ke^{-\epsilon s}}{\phi(\Spn_{s+r+t})} y\pi(\Spn_{s+r+t},\Diff y)
	\\&\geq \int_{-\infty}^0 \Diff s \int_{\frac{Ke^{-\epsilon s}}{\phi(\Spn_{s+r})}}^\infty y\pi(\Spn_{s+r},\Diff y) -  \int_{-\infty}^0 \Diff s \int_{\frac{Ke^{-\epsilon (s+t)}}{\phi(\Spn_{s+r+t})}}^\frac{Ke^{-\epsilon s}}{\phi(\Spn_{s+r+t})} y\pi(\Spn_{s+r+t},\Diff y)
	= \infty
\end{align}
	as claimed.
	Now, for any $r<t$ in $\RealLine$,
	from \eqref{eq:E.941},
	we know that $\ProbSpnDec(A_t\setminus A_r) = 0$;
	from \eqref{eq:E.95} and \eqref{eq:E.951},
	we know that $\ProbSpnDec(A_r\setminus A_t) = 0$.
	Therefore, the desired result in this step follows.

\emph{Final Step.}
From steps 5, 7
and \cite{MR1417491}*{Theorem 1.2.4.(i)}, we get that
\begin{equation} \label{eq:E.96}
	\int_{-\infty}^{0} \Diff s \int_{\frac{K e^{-\epsilon s}}{\phi(\Spn_{s})}}^\infty y \pi(\Spn_{s},\Diff y) = \infty, \quad \ProbSpnDec\text{-a.s.}
\end{equation}
	From \eqref{eq:K.41}, we know that $(\Spn_s)_{s\in \RealLine}$ has the same distribution as $(\Spn_{s-s_0})_{s\in \RealLine}$.
	Therefore we have from \eqref{eq:E.96} that
\begin{equation}
	\int_{-\infty}^{0} \Diff s \int_{\frac{K e^{-\epsilon s}}{\phi(\Spn_{s-s_0})}}^\infty y \pi(\Spn_{s-s_0},\Diff y) = \infty, \quad \ProbSpnDec\text{-a.s.}
\end{equation}
	Now we have
$\ProbSpnDec\text{-a.s.}$,
\begin{align}
	&\int_{-\infty}^{-s_0} \Diff s \int_{\frac{e^{-\epsilon s}}{\phi(\Spn_{s})}}^\infty y \pi(\Spn_{s},\Diff y)
	= \int_{-\infty}^{0} \Diff s \int_{\frac{e^{-\epsilon s} e^{\epsilon s_0}}{\phi(\Spn_{s-s_0})}}^\infty y \pi(\Spn_{s-s_0},\Diff y)
	\\&\geq \int_{-\infty}^{0} \Diff s \int_{\frac{K e^{-\epsilon s}}{\phi(\Spn_{s-s_0})}}^\infty y \pi(\Spn_{s-s_0},\Diff y)
	= \infty
\end{align}
as desired.
\end{proof}

\subsection*{Acknowledgment}
Part of the research for this paper was done while the third-named author was visiting
Jiangsu Normal University, where he was partially supported by  a grant from
the National Natural Science Foundation of China (11931004) and by
the Priority Academic Program Development of Jiangsu Higher Education Institutions.

\begin{bibdiv}
	\begin{biblist}
		
		\bib{MR0373040}{book}{
			author={Athreya, K. B.},
			author={Ney, P. E.},
			title={Branching processes},
			note={Die Grundlehren der mathematischen Wissenschaften, Band 196},
			publisher={Springer-Verlag, New York-Heidelberg},
			date={1972},
			pages={xi+287},
			review={\MR{0373040}},
		}
		
		\bib{MR2399296}{article}{
			author={Champagnat, N.},
			author={R\oe lly, S.},
			title={Limit theorems for conditioned multitype Dawson-Watanabe processes
				and Feller diffusions},
			journal={Electron. J. Probab.},
			volume={13},
			date={2008},
			pages={no. 25, 777--810},
			review={\MR{2399296}},
		}
		
		\bib{MR2397881}{article}{
			author={Chen, Z.-Q.},
			author={Ren, Y.-X.},
			author={Wang, H.},
			title={An almost sure scaling limit theorem for Dawson-Watanabe superprocesses},
			journal={J. Funct. Anal.},
			volume={254},
			date={2008},
			number={7}, pages={1988--2019}, issn={0022-1236},
			review={\MR{2397881}},
		}

     \bib{MR4264652}{article}{
            author={Cox, A. M. G.},
			author={Horton, E.},
			author={Kyprianou, A. E.},
            author={Villemonais, D.},
			title={Stochastic methods for neutron transport equation III: Generational many-to-one and $\kappa_{eff}$},
			journal={SIAM J. Appl. Math.},
			date={2021},
            volume={81}, number={3}, pages={982--1001},
            review={\MR{4264652}},
	}
		
		\bib{MR1417491}{book}{
			author={Da Prato, G.},
			author={Zabczyk, J.},
			title={Ergodicity for infinite-dimensional systems},
			series={London Mathematical Society Lecture Note Series},
			volume={229},
			publisher={Cambridge University Press, Cambridge},
			date={1996},
			pages={xii+339},
			isbn={0-521-57900-7},
			review={\MR{1417491}},
		}
		\bib{MR3035765}{article}{
			author={Delmas, J.},
			author={H\'{e}nard, O.},
			title={A Williams decomposition for spatially dependent super-processes},
			journal={Electron. J. Probab.},
			volume={18},
			date={2013},,
			pages={no. 37, 43 pp},
			review={\MR{3035765}},
		}
		\bib{MR3395469}{article}{
			author={Eckhoff, M.},
			author={Kyprianou, A. E.},
			author={Winkel, M.},
			title={Spines, skeletons and the strong law of large numbers for
				superdiffusions},
			journal={Ann. Probab.},
			volume={43},
			date={2015},
			number={5},
			pages={2545--2610},
			issn={0091-1798},
			review={\MR{3395469}},
		}
		\bib{MR2040776}{article}{
			author={Engl\"{a}nder, J.},
			author={Kyprianou, A. E.},
			title={Local extinction versus local exponential growth for spatial
				branching processes},
			journal={Ann. Probab.},
			volume={32},
			date={2004},
			number={1A},
			pages={78--99},
			issn={0091-1798},
			review={\MR{2040776}},
		}
		\bib{MR2006204}{article}{
			author={Etheridge, A. M.},
			author={Williams, D. R. E.},
			title={A decomposition of the $(1+\beta)$-superprocess conditioned on
				survival},
			journal={Proc. Roy. Soc. Edinburgh Sect. A},
			volume={133},
			date={2003},
			number={4},
			pages={829--847},
			issn={0308-2105},
			review={\MR{2006204}},
		}
			\bib{MR1249698}{article}{
		author={Evans, S. N.},
		title={Two representations of a conditioned superprocess},
		journal={Proc. Roy. Soc. Edinburgh Sect. A},
		volume={123},
		date={1993},
		number={5},
		pages={959--971},
		issn={0308-2105},
		review={\MR{1249698}},
	}
		\bib{MR1088825}{article}{
			author={Evans, S. N.},
			author={Perkins, E.},
			title={Measure-valued Markov branching processes conditioned on
				nonextinction},
			journal={Israel J. Math.},
			volume={71},
			date={1990},
			number={3},
			pages={329--337},
			issn={0021-2172},
			review={\MR{1088825}},
		}
	\bib{garcia2022asymptotic}{article}{
		title={Asymptotic moments of spatial branching processes},
		author={Garcia, I. G.},
		author={Horton, E.},
		author={Kyprianou, A. E.},
		journal={Probability Theory and Related Fields},
		date={2022},
		publisher={Springer New York},
	}
		\bib{MR408016}{article}{
			author={Grey, D. R.},
			title={Asymptotic behaviour of continuous time, continuous state-space
				branching processes},
			journal={J. Appl. Probability},
			volume={11},
			date={1974},
			pages={669--677},
			issn={0021-9002},
			review={\MR{408016}},
		}

\bib{MR4187129}{article}{
            author={Harris, S. C..},
			author={Horton, E.},
			author={ Kyprianou, A. E.},
			title={Stochastic methods for the neutron transport equation
II: almost sure growth},
			journal={Ann. Appl. Probab.},
			volume={30},
			date={2020},
			pages={2815--2845},
		review={\MR{4187129}},
}

	\bib{harris2021yaglom}{article}{
		title={Yaglom limit for critical neutron transport},
		author={Harris, S. C.},
		author={Horton, E.},
		author={Kyprianou, A. E.},
		author={Wang, M.},
journal={To appear in Ann. Probab.},
		eprint={https://doi.org/10.48550/arXiv.2103.02237 },
	}
		\bib{MR0217889}{article}{
			author={Heathcote, C. R.},
			author={Seneta, E.},
			author={Vere-Jones, D.},
			title={A refinement of two theorems in the theory of branching processes},
			language={English, with Russian summary},
			journal={Teor. Verojatnost. i Primenen.},
			volume={12},
			date={1967},
			pages={341--346},
			issn={0040-361x},
			review={\MR{0217889}},
		}
			\bib{MR1974383}{book}{
		author={Hern\'{a}ndez-Lerma, O.},
		author={Lasserre, J. B.},
		title={Markov chains and invariant probabilities},
		series={Progress in Mathematics},
		volume={211},
		publisher={Birkh\"{a}user Verlag, Basel},
		date={2003},
		pages={xvi+205},
		isbn={3-7643-7000-9},
		review={\MR{1974383}},
	}

\bib{MR4187122}{article}{
			author={Horton, E.},
			author={ Kyprianou, A. E.},
			author={Villemonais, D.},
			title={Stochastic methods for the neutron transport equation
I: linear semigroup asymptotics},
			journal={Ann. Appl. Probab.},
			volume={30},
			date={2020},
			pages={2573--2612},
		review={\MR{4187122}},
}	

		\bib{MR205337}{article}{
			author={Joffe, A.},
			title={On the Galton-Watson branching process with mean less than one},
			journal={Ann. Math. Statist.},
			volume={38},
			date={1967},
			pages={264--266},
			issn={0003-4851},
			review={\MR{205337}},
		}
\bib{MR4226142}{book}{
	author={Kallenberg, O.},
	title={Foundations of modern probability},
	series={Probability Theory and Stochastic Modelling},
	volume={99},
	note={Third edition [of  1464694]},
	publisher={Springer, Cham},
	date={[2021] \copyright 2021},
	pages={xii+946},
	isbn={978-3-030-61871-1},
	isbn={978-3-030-61870-4},
	review={\MR{4226142}},
}
	\bib{MR2487824}{article}{
		author={Kim, P.},
		author={Song, R.},
		title={Intrinsic ultracontractivity of non-symmetric diffusion semigroups
			in bounded domains},
		journal={Tohoku Math. J. (2)},
		volume={60},
		date={2008},
		number={4},
		pages={527--547},
		issn={0040-8735},
		review={\MR{2487824}},
	}
		\bib{MR3155252}{book}{
			author={Kyprianou, A. E.},
			title={Fluctuations of L\'{e}vy processes with applications},
			series={Universitext},
			edition={2},
			note={Introductory lectures},
			publisher={Springer, Heidelberg},
			date={2014},
			pages={xviii+455},
			isbn={978-3-642-37631-3},
			isbn={978-3-642-37632-0},
			review={\MR{3155252}},
		}
		\bib{MR2299923}{article}{
			author={Lambert, A.},
			title={Quasi-stationary distributions and the continuous-state branching
				process conditioned to be never extinct},
			journal={Electron. J. Probab.},
			volume={12},
			date={2007},
			pages={no. 14, 420--446},
			issn={1083-6489},
			review={\MR{2299923}},
		}
		\bib{MR0228073}{article}{
			author={Lamperti, J.},
			author={Ney, P.},
			title={Conditioned branching processes and their limiting diffusions},
			language={English, with Russian summary},
			journal={Teor. Verojatnost. i Primenen.},
			volume={13},
			date={1968},
			pages={126--137},
			issn={0040-361x},
			review={\MR{0228073}},
		}
		\bib{MR1727226}{article}{
			author={Li, Z.},
			title={Asymptotic behaviour of continuous time and state branching
				processes},
			journal={J. Austral. Math. Soc. Ser. A},
			volume={68},
			date={2000},
			number={1},
			pages={68--84},
			issn={0263-6115},
			review={\MR{1727226}},
		}
		\bib{MR2760602}{book}{
			author={Li, Z.},
			title={Measure-valued branching Markov processes},
			series={Probability and its Applications (New York)},
			publisher={Springer, Heidelberg},
			date={2011},
			pages={xii+350},
			isbn={978-3-642-15003-6},
			review={\MR{2760602}},
		}
		\bib{MR2535827}{article}{
			author={Liu, R.-L.},
			author={Ren, Y.-X.},
			author={Song, R.},
			title={$L\log L$ criterion for a class of superdiffusions},
			journal={J. Appl. Probab.},
			volume={46},
			date={2009},
			number={2},
			pages={479--496},
			issn={0021-9002},
			review={\MR{2535827}},
		}
		
		\bib{MR3010225}{article}{
			author={Liu, R.-L.},
			author={Ren, Y.-X.},
			author={Song, R.},
			title={Strong law of large numbers for a class of superdiffusions},
			journal={Acta Appl. Math.},
			volume={123},
			date={2013},
			pages={73--97}, issn={0167-8019},
			review={\MR{3010225}},
		}
		
		\bib{MR4175472}{article}{
			author={Liu, R.},
			author={Ren, Y.-X.},
			author={Song, R.},
			author={Sun, Z.},
			title={Quasi-stationary distributions for subcritical superprocesses},
			journal={Stochastic Process. Appl.},
			volume={132},
			date={2021},
			pages={108--134},
			issn={0304-4149},
			review={\MR{4175472}},
		}
		\bib{MR1349164}{article}{
			author={Lyons, R.},
			author={Pemantle, R.},
			author={Peres, Y.},
			title={Conceptual proofs of $L\log L$ criteria for mean behavior of
				branching processes},
			journal={Ann. Probab.},
			volume={23},
			date={1995},
			number={3},
			pages={1125--1138},
			issn={0091-1798},
			review={\MR{1349164}},
		}
		\bib{MR2994898}{article}{
			author={M\'{e}l\'{e}ard, S.},
			author={Villemonais, D.},
			title={Quasi-stationary distributions and population processes},
			journal={Probab. Surv.},
			volume={9},
			date={2012},
			pages={340--410},
			review={\MR{2994898}},
		}
		\bib{MR1735780}{article}{
			author={Pakes, A. G.},
			title={Revisiting conditional limit theorems for the mortal simple
				branching process},
			journal={Bernoulli},
			volume={5},
			date={1999},
			number={6},
			pages={969--998},
			issn={1350-7265},
			review={\MR{1735780}},
		}
		\bib{penisson2010conditional}{thesis}{
		title={Conditional limit theorems for multitype branching processes and illustration in epidemiological risk analysis},
		author={P\'{e}nisson, S.},
		date={2010},
		type={Ph.D. Thesis},
		organization={Universit{\"a}t Potsdam; Universit{\'e} Paris Sud-Paris XI},
		eprint={https://tel.archives-ouvertes.fr/tel-00570458/document },
	}
		\bib{MR3476213}{article}{
			author={P\'{e}nisson, S.},
			title={Beyond the $Q$-process: various ways of conditioning the multitype
				Galton-Watson process},
			journal={ALEA Lat. Am. J. Probab. Math. Stat.},
			volume={13},
			date={2016},
			number={1},
			pages={223--237},
			review={\MR{3476213}},
		}
		\bib{pollett2008quasi}{article}{
			author={Pollett, P. K.},
			title={Quasi-stationary distributions: a bibliography},
			date={2015},
			eprint={http://www.maths.uq.edu.au/*pkp/papers/qsds/qsds.html},
		}
		\bib{MR4058118}{article}{
			author={Ren, Y.-X.},
			author={Song, R.},
			author={Sun, Z.},
			title={Spine decompositions and limit theorems for a class of critical
				superprocesses},
			journal={Acta Appl. Math.},
			volume={165},
			date={2020},
			pages={91--131},
			issn={0167-8019},
			review={\MR{4058118}},
		}
		\bib{MR4102269}{article}{
			author={Ren, Y.-X.},
			author={Song, R.},
			author={Sun, Z.},
			title={Limit theorems for a class of critical superprocesses with stable
				branching},
			journal={Stochastic Process. Appl.},
			volume={130},
			date={2020},
			number={7},
			pages={4358--4391},
			issn={0304-4149},
			review={\MR{4102269}},
		}
		\bib{MR4049077}{article}{
			author={Ren, Y.-X.},
			author={Song, R.},
			author={Sun, Z.},
			author={Zhao, J.},
			title={Stable central limit theorems for super Ornstein-Uhlenbeck
				processes},
			journal={Electron. J. Probab.},
			volume={24},
			date={2019},
			pages={Paper No. 141, 42},
			review={\MR{4049077}},
		}
		\bib{MR4397885}{article}{
		author={Ren, Y.-X.},
		author={Song, R.},
		author={Sun, Z.},
		author={Zhao, J.},
		title={Stable central limit theorems for super Ornstein-Uhlenbeck
			processes, II},
		journal={Acta Math. Sin. (Engl. Ser.)},
		volume={38},
		date={2022},
		number={3},
		pages={487--498},
		issn={1439-8516},
		review={\MR{4397885}},
	}
	\bib{MR4359790}{article}{
		author={Ren, Y.-X.},
		author={Song, R.},
		author={Yang, T.},
		title={Spine decomposition and $L \log L$ criterion for superprocesses
			with non-local branching mechanisms},
		journal={ALEA Lat. Am. J. Probab. Math. Stat.},
		volume={19},
		date={2022},
		number={1},
		pages={163--208},
		review={\MR{4359790}},
	}
		\bib{MR3293289}{article}{
			author={Ren, Y.-X.},
			author={Song, R.},
			author={Zhang, R.},
			title={Central limit theorems for supercritical superprocesses},
			journal={Stochastic Process. Appl.},
			volume={125},
			date={2015},
			number={2},
			pages={428--457}, issn={0304-4149},
			review={\MR{3293289}},
		}
		\bib{MR3459635}{article}{
			author={Ren, Y.-X.},
			author={Song, R.},
			author={Zhang, R.},
			title={Limit theorems for some critical superprocesses},
			journal={Illinois J. Math.},
			volume={59},
			date={2015},
			number={1},
			pages={235--276},
			issn={0019-2082},
			review={\MR{3459635}},
		}
		\bib{MR3601657}{article}{
		author={Ren, Y.-X.},
		author={Song, R.},
		author={Zhang, R.},
		title={Central limit theorems for supercritical branching nonsymmetric
			Markov processes},
		journal={Ann. Probab.},
		volume={45},
		date={2017},
		number={1},
		pages={564--623},
		issn={0091-1798},
		review={\MR{3601657}},
	}
		\bib{MR958914}{book}{
			author={Sharpe, M.},
			title={General theory of Markov processes},
			series={Pure and Applied Mathematics},
			volume={133},
			publisher={Academic Press, Inc., Boston, MA},
			date={1988},
			pages={xii+419},
			isbn={0-12-639060-6},
			review={\MR{958914}},
		}
		\bib{MR0022045}{article}{
			author={Yaglom, A. M.},
			title={Certain limit theorems of the theory of branching random
				processes},
			language={Russian},
			journal={Doklady Akad. Nauk SSSR (N.S.)},
			volume={56},
			date={1947},
			pages={795--798},
			review={\MR{0022045}},
		}
	\end{biblist}
\end{bibdiv}
\end{document}